\newtheorem{thm}{Theorem}[section]
\newtheorem{lem}{Lemma}[section]
\newtheorem{defi}{Definition}[section]
\newtheorem{ex}{Example}[section]
\newtheorem{rem}{Remark}[section]
\newtheorem*{claim}{Claim}
\begin{document}

\title{On $C^0$-genericity of distributional chaos}
\author{Noriaki Kawaguchi$^\ast$}
\thanks{$^\ast$JSPS Research Fellow}
\subjclass[2010]{74H65; 37C50}
\keywords{distributional chaos; generic; shadowing; zero-dimension; Mycielski set}
\address{Faculty of Science and Technology, Keio University, 3-14-1 Hiyoshi, Kohoku-ku, Yokohama, Kanagawa 223-8522, Japan}
\email{gknoriaki@gmail.com}

\begin{abstract}
Let $M$ be a compact smooth manifold without boundary. Based on results by Good and Meddaugh (2020), we prove that a strong distributional chaos is $C^0$-generic in the space of continuous self-maps (resp.\:homeomorphisms) of $M$. The results contain answers to questions by Li et al. (2016)  and Moothathu (2011) in the zero-dimensional case. A related counter-example on the chain components under shadowing is also given.
\end{abstract}

\maketitle

\markboth{NORIAKI KAWAGUCHI}{ON $C^0$-GENERICITY OF DISTRIBUTIONAL CHAOS}

\section{Introduction}

Throughout, $X$ denotes a compact metric space endowed with a metric $d$. We denote by $\mathcal{C}(X)$ (resp.\:$\mathcal{H}(X)$) the set of continuous self-maps (resp.\:homeomorphisms) of $X$. Let $d_{C^0}\colon\mathcal{C}(X)\times\mathcal{C}(X)\to[0,\infty)$ be the metric defined by
\[
d_{C^0}(f,g)=\sup_{x\in X}d(f(x),g(x))
\]
for $f,g\in\mathcal{C}(X)$. A metric $\hat{d}_{C^0}\colon\mathcal{H}(X)\times\mathcal{H}(X)\to[0,\infty)$ is given by
\[
\hat{d}_{C^0}(f,g)=\max\{d_{C^0}(f,g),d_{C^0}(f^{-1},g^{-1})\}
\]
for $f,g\in\mathcal{H}(X)$.  With respect to these metrics, $\mathcal{C}(X)$ and $\mathcal{H}(X)$ are complete metric spaces.

A subset $S$ of $X$ is called a {\em Mycielski set} if it is a countable union of Cantor sets.  Define $\mathcal{C}_{{\rm DC1}_\ast}(X)$ to be the set of $f\in\mathcal{C}(X)$ such that there is a Mycielski subset $S$ of $X$, which is distributionally $n$-$\delta_n$-scrambled for all $n\ge2$ for some $\delta_n>0$. Let
\[
\mathcal{H}_{{\rm DC1}_\ast}(X)=\mathcal{H}(X)\cap\mathcal{C}_{{\rm DC1}_\ast}(X).
\]
We say that a subset $F$ of a complete metric space $Z$ is {\em residual} if it contains a countable intersection of open and  dense subsets of $Z$. The aim of this paper is to prove the following theorem.

\begin{thm}
Given any compact smooth manifold $M$ without boundary, $\mathcal{C}_{{\rm DC1}_\ast}(M)$ is a residual subset of $\mathcal{C}(M)$, and if $\dim M>1$, then $\mathcal{H}_{{\rm DC1}_\ast}(M)$ is also a residual subset of $\mathcal{H}(M)$.
\end{thm}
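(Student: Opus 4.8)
The plan is to decouple the two phenomena bundled into the definition of $\mathcal{C}_{{\rm DC1}_\ast}(M)$ — the shadowing-type robustness that is genuinely generic, and the statistical scrambling that I want to deduce from it — and to use the theory of Good and Meddaugh to supply the first. A direct attack on $\mathcal{C}_{{\rm DC1}_\ast}(M)$ as a $G_\delta$ set is awkward, since distributional scrambling is phrased through the $\liminf$ and $\limsup$ of the Ces\`aro frequencies $\frac1N\#\{0\le i<N:d(f^i(x),f^i(y))<t\}$, which do not vary semicontinuously in $f$. I would therefore not try to show that $\mathcal{C}_{{\rm DC1}_\ast}(M)$ is residual by a direct Baire argument on its own definition; instead I would exhibit a residual set $\mathcal{R}\subseteq\mathcal{C}(M)$ with a clean dynamical description and prove the inclusion $\mathcal{R}\subseteq\mathcal{C}_{{\rm DC1}_\ast}(M)$.

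First I would fix the generic skeleton. By the Good--Meddaugh theory the shadowing property is $C^0$-generic in $\mathcal{C}(M)$, and, for $\dim M>1$, in $\mathcal{H}(M)$; write $\mathcal{S}$ for the corresponding residual set. The only additional ingredient needed is a nondegenerate chain-mixing component, and at the chain level this is cheap: on a connected positive-dimensional manifold even the identity is chain transitive on all of $M$, because $\varepsilon$-chains only require small jumps, so some nondegenerate chain-mixing piece survives generically and combines with $\mathcal{S}$ to give a residual $\mathcal{R}$. This is where the hypotheses enter. In the invertible case $\dim M>1$ is used both to embed the model two-sidedly and to invoke the shadowing-genericity statement for $\mathcal{H}(M)$, which genuinely fails when $\dim M=1$: closed $1$-manifolds force homeomorphisms to be monotone circle maps governed by a rotation number, admitting no scrambling.

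The heart of the argument is the implication $f\in\mathcal{R}\Rightarrow f\in\mathcal{C}_{{\rm DC1}_\ast}(M)$, i.e.\ manufacturing a single Mycielski set scrambled at every level $n\ge2$. Working inside the nondegenerate chain-mixing piece, which by shadowing behaves like a zero-dimensional symbolic model, I would, for each $n$, prescribe $\varepsilon$-pseudo-orbits that alternate long blocks on which $n$ marked itineraries stay within $\delta_n$ of one another with ever longer blocks on which they are pairwise separated by a definite amount, choosing the block lengths so that the two families of partial densities tend to $1$ and to $0$ respectively; shadowing then converts these pseudo-orbits into genuine orbits realizing $F^\ast=1$ and $F(\delta_n)=0$, with $\delta_n$ taken small enough to absorb the shadowing error. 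Carrying this out coherently over a Cantor-parametrized family of itineraries, I would show that the distributionally scrambled relation is residual in $S\times S$ for a suitable Cantor-rich $S$, and conclude by Mycielski's theorem that a dense Mycielski set exists on which scrambling holds at every level $n$.

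The main obstacle I anticipate is precisely this uniform, all-levels construction. The requirements $\liminf=0$ and $\limsup=1$ pull the block lengths in opposite directions, and they must be reconciled not for a single pair but simultaneously for every $n$-tuple drawn from the Mycielski set, robustly under the $\varepsilon$-imprecision that shadowing leaves. Keeping the separation constants $\delta_n$ bounded away from $0$ while letting the close blocks dominate in density, and doing so across the whole Cantor family so that Mycielski's theorem applies at once to all $n$, is the delicate bookkeeping on which the proof turns. The passage from $\mathcal{C}(M)$ to $\mathcal{H}(M)$ then requires only that every step above be executed with two-sided (invertible) pseudo-orbits inside the $\dim M>1$ model.
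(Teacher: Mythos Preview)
Your high-level strategy---exhibit a residual $\mathcal{R}$ with clean dynamical description and prove $\mathcal{R}\subseteq\mathcal{C}_{{\rm DC1}_\ast}(M)$, then build the Mycielski set by shadowing prescribed pseudo-orbits and invoking Mycielski's theorem---matches the paper's. But two of the three generic ingredients are missing from your skeleton, and the one you substitute does not do the work you ask of it. The paper takes $\mathcal{R}=\mathcal{C}_{sh}(M)\cap\mathcal{C}_{cr^0}(M)\cap\mathcal{C}_{h>0}(M)$: shadowing, \emph{zero-dimensional chain recurrent set}, and \emph{positive topological entropy}, each residual by separate results in the literature (Mazur--Oprocha / Pilyugin--Plamenevskaya; Krupski--Omiljanowski--Ungeheuer / Akin--Hurley--Kennedy; Yano). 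Good--Meddaugh is not the source of genericity of shadowing; its role is structural---once $\dim CR(f)=0$ and $f$ has shadowing, their theorem presents $(CR(f),f)$ as an inverse limit of SFTs satisfying the Mittag--Leffler condition, and the paper's real work (Lemmas 4.1, 5.1, 5.2) is a careful MLC-preserving selection of chain components and of classes in $\mathcal{D}(f)$ inside that inverse system.

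Your replacement ingredient, ``a nondegenerate chain-mixing piece exists generically because the identity is chain transitive on a connected manifold,'' is a non sequitur. Chain transitivity of $\mathrm{id}_M$ reflects the connectedness of $M$, not any dynamics, and it says nothing about generic $f$: generically $CR(f)$ is totally disconnected, so that argument evaporates. More seriously, even granting a nondegenerate chain-mixing component with shadowing, you have not secured what the separation half of the DC1 construction needs, namely $r_n$-distal $n$-tuples inside a single $\mathcal{D}(f)$-class (the paper's Lemma 5.4). That is exactly where $h_{top}(f)>0$ is spent; without entropy there are shadowing, minimal, zero-entropy models (odometers) with no scrambled pairs at all, so your pseudo-orbit bookkeeping has nothing to separate. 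In short: add $\dim CR(f)=0$ and $h_{top}(f)>0$ to your residual set, use Good--Meddaugh as a structure theorem on $CR(f)$ rather than as a genericity statement, and recognize that the delicate step is not the density bookkeeping you flag but locating a chain component (and a $\mathcal{D}$-class within it) on which shadowing and positive entropy survive simultaneously.
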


We recall the definition of distributional $n$-chaos \cite{LO,TF}.

\begin{defi}
\normalfont
For $f\in\mathcal{C}(X)$, an $n$-tuple $(x_1,x_2,\dots,x_n)\in X^n$, $n\ge2$, is said to be {\em distributionally $n$-$\delta$-scrambled} for $\delta>0$ if
\begin{equation*}
\limsup_{m\to\infty}\frac{1}{m}|\{0\le k\le m-1\colon\max_{1\le i<j\le n}d(f^k(x_i),f^k(x_j))<\epsilon\}|=1
\end{equation*}
for all $\epsilon>0$, and
\begin{equation*}
\limsup_{m\to\infty}\frac{1}{m}|\{0\le k\le m-1\colon\min_{1\le i<j\le n}d(f^k(x_i),f^k(x_j))>\delta\}|=1.
\end{equation*}
Let ${\rm DC1}_n^\delta(X,f)$ denote the set of distributionally $n$-$\delta$-scrambled $n$-tuples and let ${\rm DC1}_n(X,f)=\bigcup_{\delta>0}{\rm DC1}_n^\delta(X,f)$.
A subset $S$ of  $X$ is said to be {\em distributionally $n$-scrambled} (resp.\:{\em $n$-$\delta$-scrambled}) if
\[
(x_1,x_2,\dots ,x_n)\in{\rm DC1}_n(X,f)\:\text{(resp.\:${\rm DC1}_n^\delta(X,f)$)}
\]
for any distinct $x_1,x_2,\dots,x_n\in S$. We say that $f$ exhibits the {\em distributional $n$-chaos of type 1} (${\rm DC1}_n$) if there is an uncountable distributionally $n$-scrambled subset of $X$.
\end{defi}

The notion of {\em distributional chaos} was introduced by Schweizer and Sm\'ital \cite{SS} as a refinement of Li-Yorke chaos for interval maps. It has three versions ${\rm DC}\beta_2$, $\beta\in\{1,2,3\}$, which are numbered in the order of decreasing strength. By definition, ${\rm DC1}_2$ is the strongest, and ${\rm DC2}_2$ is still stronger than Li-Yorke chaos. Let $h_{top}(f)$ denote the topological entropy of $f\in\mathcal{C}(X)$. For an interval map $f\in\mathcal{C}([0,1])$, all ${\rm DC}\beta_2$, $\beta\in\{1,2,3\}$, are equivalent to $h_{top}(f)>0$ \cite{SS} (see also \cite{Ru}). Since there is a Li-Yorke chaotic $f\in\mathcal{C}([0,1])$ with $h_{top}(f)=0$, ${\rm DC2}_2$ is strictly stronger than Li-Yorke chaos in general \cite{Smi, X} (see also \cite{Ru}). A well-known result in \cite{BGKM} showed that any $f\in\mathcal{C}(X)$ with $h_{top}(f)>0$ is Li-Yorke chaotic, and it was later improved by Downarowicz to ${\rm DC2}_2$ \cite{D}. However, Piku\l a \cite{Pik} constructed a subshift $(X,f)$ with $h_{top}(f)>0$ and
\[
{\rm DC1}_2(X,f)=\emptyset;
\]
therefore, $h_{top}(f)>0$ does not always imply ${\rm DC1}_2$. A Toeplitz subshift $(X,f)$ with $h_{top}(f)>0$ is a natural example of $f\in\mathcal{C}(X)$ with ${\rm DC1}_2(X,f)=\emptyset$ \cite{BS}. Thus, some additional assumptions besides $h_{top}(f)>0$ are needed to ensure ${\rm DC1}_2$ for a general $f\in\mathcal{C}(X)$.

{\em Shadowing} is a natural candidate for such an assumption. In \cite{LLT}, Li et al. proved that for any $f\in\mathcal{C}(X)$ with the shadowing property, there is a Mycielski subset $S$ of $X$, which is distributionally $n$-$\delta_n$-scrambled for all $n\ge2$ for some $\delta_n>0$, if one of the following properties holds: $(1)$ $f$ is non-periodic transitive and has a periodic point, or $(2)$ $f$ is non-trivial weakly mixing. Here, note that we have $h_{top}(f)>0$ in both cases. This result has been generalized in \cite{Ka2} by using a relation defined by Richeson and Wiseman \cite{RW}. In \cite{Ka1}, it was proved that for any $f\in\mathcal{C}(X)$ with the limit shadowing property, if $h_{top}(f)>0$, then $f$ exhibits ${\rm DC1}_2$. As a consequence, the set of $f\in\mathcal{C}(M)$ exhibiting ${\rm DC1}_2$, where $M$ is a compact topological manifold (possibly with boundary), is dense in $\mathcal{C}(M)$. However, since the limit shadowing has only been proved to be dense in $\mathcal{C}(M)$ \cite{MO}, it is still open whether ${\rm DC1}_2$ is generic or not. The above Theorem 1.1 solves this problem for any compact smooth manifold $M$ without boundary. Note that for every $n\ge2$, ${\rm DC1}_n$ does not necessarily imply ${\rm DC1}_{n+1}$ \cite{LO,TF}.

In outline, the proof of Theorem 1.1 goes as follows. In \cite{GM}, Good and Meddaugh found and investigated a basic relationship between the subshifts of finite type (abbrev.\:SFTs) and the shadowing. The following two lemmas are from \cite{GM}.

\begin{lem}
Let $\pi=(\pi_n^{n+1}\colon(X_{n+1},f_{n+1})\to(X_n,f_n))_{n\ge1}$ be an inverse sequence of equivariant maps and let $(X,f)=\lim_\pi(X_n,f_n)$. If $f_n\colon X_n\to X_n$ has the shadowing property for each $n\ge1$, and $\pi$ satisfies MLC, then $f$ has the shadowing property.
\end{lem}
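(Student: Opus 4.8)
The plan is to verify the $\epsilon$--$\delta$ definition of shadowing for $f$ directly, exploiting that both the metric and the dynamics of the inverse limit $(X,f)=\lim_\pi(X_n,f_n)$ are assembled coordinatewise. I will equip $X$ with a compatible metric of the form $d((x_n)_n,(y_n)_n)=\sum_{n\ge1}2^{-n}d_n(x_n,y_n)$, where $d_n$ is a metric on $X_n$ bounded by $1$; then agreement within $\eta$ in the coordinates $1,\dots,N$ forces $d<\eta+2^{-N}$. Thus, given $\epsilon>0$, I first fix $N$ with $2^{-N}<\epsilon/2$ and reduce the goal to producing, for a sufficiently fine pseudo-orbit, a point of $X$ whose orbit $(\epsilon/2)$-shadows the given pseudo-orbit in each of the coordinates $1,\dots,N$. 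Since $\pi_n^{n+1}\circ f_{n+1}=f_n\circ\pi_n^{n+1}$, a pseudo-orbit $(\xi_i)_i$ in $X$ projects under $\pi_n$ to a genuine pseudo-orbit of $f_n$ in $X_n$, the constituent estimates being related by the factor $2^n$ coming from the metric.

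Next I would realize the shadowing at a single, judiciously chosen level and then transport it downward. Here the Mittag-Leffler condition enters decisively: for the fixed $N$ it provides an $M\ge N$ such that the decreasing sequence $\pi_N^m(X_m)$ has stabilized for all $m\ge M$, so that this stable image coincides with $\pi_N(X)$. Using uniform continuity of the finitely many bonding maps $\pi_n^M=\pi_n^N\circ\pi_N^M$ with $n\le N$, I choose a shadowing precision $\eta'$ at level $M$ fine enough that $\eta'$-shadowing in $X_M$ projects to $(\epsilon/2)$-shadowing in every coordinate $n\le N$. The shadowing property of $f_M$ then supplies a constant $\delta_M>0$; taking $\delta<2^{-M}\delta_M$ guarantees that the level-$M$ projection of any $\delta$-pseudo-orbit of $f$ is a $\delta_M$-pseudo-orbit of $f_M$, hence is $\eta'$-shadowed by the orbit of some $w\in X_M$.

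The final step is the lift, and this is where I expect the only real difficulty to lie. Setting $z=\pi_N^M(w)$, equivariance shows that the $f_N$-orbit of $z$ shadows the level-$N$ pseudo-orbit to the required precision, and---crucially---$z\in\pi_N^M(X_M)$, which by the Mittag-Leffler stabilization equals $\pi_N(X)$. Consequently $z$ is the image under $\pi_N$ of some genuine point $\xi\in X$. Because the coordinates of $\xi$ in levels $n\le N$ are exactly $\pi_n^N(z)$, the $f$-orbit of $\xi$ inherits the $(\epsilon/2)$-shadowing in those coordinates, while the tail coordinates contribute at most $2^{-N}<\epsilon/2$; hence $\xi$ $\epsilon$-shadows the original pseudo-orbit, proving that $f$ has shadowing. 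The subtle point throughout is that the shadowing of the $f_n$ alone produces a shadow point at each finite level but gives no compatibility across levels; it is precisely the Mittag-Leffler condition that forces the level-$M$ shadow to land in the stable image $\pi_N(X)$ and thereby to admit a consistent lift to the inverse limit, and without it the requisite $\delta$ would have to shrink with the level, so that no single choice valid for the fixed $\epsilon$ would survive.
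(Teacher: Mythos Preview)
The paper does not prove this lemma; it is quoted from Good and Meddaugh \cite{GM} (see the sentence ``The following two lemmas are from \cite{GM}'' preceding Lemma~1.1). There is therefore no in-paper proof to compare against.

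Your argument is correct. The scheme you describe---fix $N$ by the metric, use MLC to choose $M\ge N$ with $\pi_N^M(X_M)=\hat{X}_N$, shadow at level $M$, project the shadow point to $\hat{X}_N$, and lift it to $X$---is the natural one, and it matches the technique the paper itself uses later in the proof of Lemma~5.2 (there with MLC(1) and $M=N+1$). One cosmetic remark: your bound $\delta<2^{-M}\delta_M$ is fine for the specific metric you chose, but since the paper does not fix a metric on the inverse limit, you might phrase this step more invariantly (``choose $\delta$ so small that the level-$M$ projection of any $\delta$-pseudo-orbit is a $\delta_M$-pseudo-orbit''), which is how the paper handles the analogous estimates in Lemmas~3.3 and~5.2.
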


\begin{lem}
Let $f\colon X\to X$ be a continuous map with the shadowing property. If $\dim X=0$, then there is an inverse sequence of equivariant maps
\[
\pi=(\pi_n^{n+1}\colon(X_{n+1},f_{n+1})\to(X_n,f_n))_{n\ge1}
\]
such that the following properties hold:
\begin{itemize}
\item[(1)] $\pi$ satisfies MLC,
\item[(2)] $(X_n,f_n)$ is a SFT for each $n\ge1$,
\item[(3)] $(X,f)$ is topologically conjugate to $\lim_{\pi}(X_n,f_n)$.
\end{itemize}
\end{lem}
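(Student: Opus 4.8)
The plan is to realise $(X,f)$ as the inverse limit of the natural \emph{transition SFTs} attached to a suitably chosen decreasing sequence of finite clopen partitions, and to extract the Mittag--Leffler condition from shadowing. Since $\dim X=0$ and $X$ is compact, $X$ admits finite clopen partitions of arbitrarily small mesh, and any two have a common clopen refinement; moreover a finite clopen partition $\mathcal{P}$ has a positive \emph{separation} $\rho(\mathcal{P})=\min_{P\ne P'}d(P,P')>0$, so $d(x,y)<\rho(\mathcal{P})$ forces $x,y$ into the same member of $\mathcal{P}$. Using this I would build partitions $\mathcal{P}_1,\mathcal{P}_2,\dots$ \emph{adaptively}: given $\mathcal{P}_n$, set $\epsilon_n=\rho(\mathcal{P}_n)/2$, let $\delta_n\in(0,\epsilon_n)$ be a shadowing constant for $\epsilon_n$, and choose a clopen refinement $\mathcal{P}_{n+1}$ of $\mathcal{P}_n$ with $\mathrm{mesh}(\mathcal{P}_{n+1})<\min\{\delta_n,1/(n+1)\}$. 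For each $n$ let $(X_n,f_n)$ be the one-step SFT on the alphabet $\mathcal{P}_n$ whose transition matrix $A_n$ puts $A_n(P,Q)=1$ exactly when $P\cap f^{-1}(Q)\ne\emptyset$, with $f_n$ the shift; since each member of $\mathcal{P}_{n+1}$ lies in a unique $\kappa_n(\cdot)\in\mathcal{P}_n$, the coordinatewise relabelling $\pi_n^{n+1}$ induced by $\kappa_n$ is an equivariant map $X_{n+1}\to X_n$. This gives (2) for free.

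For (3) I would use the itinerary maps $\phi_n\colon X\to\mathcal{P}_n^{\mathbb{N}}$, $\phi_n(x)=(\mathcal{P}_n(f^k(x)))_{k\ge0}$, where $\mathcal{P}_n(y)$ is the member of $\mathcal{P}_n$ containing $y$. Each $\phi_n$ lands in $X_n$, is equivariant, and satisfies $\pi_n^{n+1}\circ\phi_{n+1}=\phi_n$, so the $\phi_n$ assemble into an equivariant map $\Phi\colon X\to\lim_\pi(X_n,f_n)$. Injectivity is immediate because $\mathrm{mesh}(\mathcal{P}_n)\to0$: distinct points are separated at time $0$ by some $\mathcal{P}_n$. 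As $X$ is compact and $\Phi$ continuous, it remains to check surjectivity. Given a coherent sequence $((a^{(n)}_k)_k)_n$ in the inverse limit, for each fixed $k$ the sets $a^{(n)}_k$ form a nested clopen family with $\operatorname{diam}\to0$, so $\bigcap_n a^{(n)}_k=\{x_k\}$; the constraint $A_n(a^{(n)}_k,a^{(n)}_{k+1})=1$ supplies witnesses $z_{n,k}\in a^{(n)}_k$ with $f(z_{n,k})\in a^{(n)}_{k+1}$, and letting $n\to\infty$ gives $z_{n,k}\to x_k$ and $f(z_{n,k})\to x_{k+1}$, whence $f(x_k)=x_{k+1}$ by continuity. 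Thus $(x_k)$ is the forward orbit of $x_0$ and $\Phi(x_0)$ is the given sequence, so $\Phi$ is a conjugacy; note this step does not yet use shadowing.

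The heart of the proof, and the only place the shadowing hypothesis is essential, is (1). I read MLC as the requirement that for each $n$ the decreasing family of images $\pi_n^m(X_m)$ ($m\ge n$) stabilises. The clean way to get this is to show that already $\pi_n^{n+1}(X_{n+1})=\phi_n(X)$, since then $\phi_n(X)\subseteq\pi_n^m(X_m)\subseteq\pi_n^{n+1}(X_{n+1})=\phi_n(X)$ for all $m\ge n+1$, forcing stabilisation. The inclusion $\phi_n(X)\subseteq\pi_n^{n+1}(X_{n+1})$ is formal. For the reverse, take $(a_k)\in\pi_n^{n+1}(X_{n+1})$ lifted to $(b_k)\in X_{n+1}$ with $\kappa_n(b_k)=a_k$; the transitions $A_{n+1}(b_k,b_{k+1})=1$ give points $w_k\in b_k$ with $f(w_k)\in b_{k+1}$, and since $f(w_k)$ and $w_{k+1}$ both lie in the single member $b_{k+1}$ of $\mathcal{P}_{n+1}$, whose diameter is below $\delta_n$, the sequence $(w_k)$ is a $\delta_n$-pseudo-orbit. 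Shadowing yields $x$ with $d(f^k(x),w_k)<\epsilon_n$ for all $k$; as $w_k\in b_k\subseteq a_k$ and $\epsilon_n<\rho(\mathcal{P}_n)$, each $f^k(x)$ lies in $a_k$, so $\phi_n(x)=(a_k)$ and $(a_k)\in\phi_n(X)$. This is exactly where the adaptive choice $\mathrm{mesh}(\mathcal{P}_{n+1})<\delta_n$ pays off, and it is the step I expect to be most delicate: one must interlock the separation constant of $\mathcal{P}_n$, the shadowing constant for it, and the mesh of $\mathcal{P}_{n+1}$ so that the one-step lift of an allowed $\mathcal{P}_n$-name is always realised by a genuine orbit. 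With (1)--(3) in hand the lemma follows.
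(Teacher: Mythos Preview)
The paper does not give its own proof of this lemma; it is quoted as a result of Good and Meddaugh \cite{GM}. Your argument is correct and is precisely the construction underlying that result: one represents $(X,f)$ as the inverse limit of the one-step transition SFTs attached to an adaptively refined sequence of finite clopen partitions, and the crucial identity $\pi_n^{n+1}(X_{n+1})=\phi_n(X)$, obtained from shadowing via the calibration $\mathrm{mesh}(\mathcal{P}_{n+1})<\delta_n<\epsilon_n<\rho(\mathcal{P}_n)$, forces the images $\pi_n^m(X_m)$ to stabilise at $m=n+1$. In fact you obtain slightly more than stated, namely MLC(1) in the paper's terminology, so that the passage through Lemma~2.2 and Lemma~2.4 would be unnecessary for this particular inverse sequence.
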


Note that these results concern the so-called {\em Mittag-Leffler Condition} (MLC) of an inverse sequence of equivariant maps. Most part of this paper is devoted to a study of MLC with focus on the structure of chain components. By using the above lemmas and a method in \cite{Ka2} with Mycielski's theorem, we prove the following lemma. Here, $\mathcal{D}(f)$ is the partition of $X$ with respect to the equivalence relation $\sim_f$ defined by Richeson and Wiseman (see Section 2.2 for details). 

\begin{lem}
Let $f\colon X\to X$ be a transitive continuous map with the shadowing property. If $\dim X=0$ and $h_{top}(f)>0$, then there is $D\in\mathcal{D}(f)$ such that $D$ contains a dense Mycielski subset $S$, which is distributionally $n$-$\delta_n$-scrambled for all $n\ge2$ for some $\delta_n>0$.
\end{lem}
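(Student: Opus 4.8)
The plan is to reduce the statement to a transitive subshift of finite type with positive entropy, locate the relevant Richeson--Wiseman class inside the inverse-limit presentation, and then manufacture the scrambled set by Mycielski's theorem. First I would invoke Lemma 1.2: since $\dim X=0$ and $f$ has the shadowing property, $(X,f)$ is topologically conjugate to an inverse limit $\lim_\pi(X_n,f_n)$ of SFTs along a sequence $\pi$ satisfying MLC, so I may argue directly in this model. Transitivity passes to each factor via the projections, so every $(X_n,f_n)$ is a transitive SFT; and because the topological entropy of an inverse limit is the supremum of the entropies of the factors, the hypothesis $h_{top}(f)>0$ forces $h_{top}(f_m)>0$ for some $m$. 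A transitive SFT of positive entropy splits, via its spectral (cyclic) decomposition, into finitely many pieces permuted cyclically by $f_m$, on each of which an appropriate power of $f_m$ is topologically mixing and still has positive entropy.

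Next I would transport this structure back to $(X,f)$. The relation $\sim_f$ of \cite{RW} partitions $X$ into the classes $\mathcal{D}(f)$; for a chain transitive system with shadowing these classes are the chain-mixing pieces, cyclically permuted by $f$. Using MLC to lift the mixing piece found at level $m$, I would single out the class $D\in\mathcal{D}(f)$ that carries the positive entropy, on which a suitable return map is mixing and which, being infinite with no isolated points, is perfect; in the zero-dimensional setting $D$ is then a Cantor set. This is the class named in the statement.

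On $D$ I would run the construction of \cite{LLT} in the form used in \cite{Ka2}. The shadowing property lets me realize any prescribed $\epsilon$-chain by a genuine orbit, while positive entropy supplies, at arbitrarily small scales, both pairs of long admissible words that agree and pairs that are $\delta_m$-separated. Interleaving such words over geometrically increasing time windows produces, for each $n\ge2$, tuples whose orbits have all pairwise distances below $\epsilon$ along a set of times of upper density $1$ (for every $\epsilon>0$) and, simultaneously, all pairwise distances above $\delta_n$ along a set of times of upper density $1$; that is, distributionally $n$-$\delta_n$-scrambled tuples, with $\delta_n$ independent of the tuple. The crucial point is that, for each $n$, the set of such scrambled $n$-tuples is a dense $G_\delta$ subset of $D^n$. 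Mycielski's theorem then yields a single dense Mycielski set $S\subseteq D$ all of whose distinct $n$-tuples are distributionally $n$-$\delta_n$-scrambled for every $n\ge2$, as required.

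The main obstacle, I expect, is the second step: correctly identifying $D\in\mathcal{D}(f)$ and certifying that the positive entropy and the mixing needed for the construction genuinely localize on $D$. This requires understanding how the $\sim_f$ classes interact with the MLC inverse-limit presentation, so that the finite-level spectral decomposition of $(X_m,f_m)$ descends to a chain-mixing, positive-entropy return map on a single class $D$ of $(X,f)$. Once that localization is secured, verifying residuality of the scrambled $n$-tuples simultaneously for all $n$ with a uniform $\delta_n$ is a standard Baire-category argument built on shadowing, and the passage from residuality to a dense Mycielski set is precisely Mycielski's theorem.
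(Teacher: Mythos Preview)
Your outline matches the paper's architecture---reduce to an inverse limit of SFTs via Lemma~1.2, locate a suitable $D\in\mathcal{D}(f)$, and finish with Mycielski's theorem---but there are two real problems.

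First, the claim ``transitivity passes to each factor via the projections, so every $(X_n,f_n)$ is a transitive SFT'' is not justified: the projections $X\to X_n$ need not be onto, and their images $\hat X_n$ need not be SFTs. The paper handles this by observing (Lemma~3.2) that transitivity of $f$ gives $X=[C_\ast]$ for some $C_\ast=(C_n)_{n\ge1}\in\mathcal{C}_\pi$; each $C_n$ is a chain component of the SFT $(X_n,f_n)$, hence itself a transitive SFT, and Lemma~3.4 shows that the restricted inverse sequence over the $C_n$ still satisfies MLC(1). Only after this reduction are all levels transitive.

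Second, and more seriously, you correctly flag the identification of $D$ as the main obstacle but do not solve it, and your diagnosis is slightly off. The difficulty is not locating where the entropy lives: by Lemma~5.4 (imported from \cite{Ka2}), \emph{every} $D\in\mathcal{D}(f)$ already contains $r_n$-distal $n$-tuples once $h_{top}(f)>0$, since the classes are cyclically permuted. What must be secured is that $\delta$-pseudo orbits beginning in $D$ can be $\epsilon$-shadowed by points of $D$ itself (Lemma~5.2); this is not a consequence of shadowing for $f$ alone, because $\mathcal{D}(f)$ need not be a finite clopen partition of $X$. The paper achieves it by a delicate inductive construction (Lemma~5.1), parallel to the one inside the proof of Lemma~4.1, that selects $D_\ast=(D_n)_{n\ge1}\in\mathcal{D}_\pi$ so that the restricted inverse sequence over the $D_n$ again satisfies MLC(1); Lemma~5.2 then lifts shadowing within each $D_{N+1}$ to shadowing within $D=[D_\ast]$. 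Your suggestion to ``use MLC to lift the mixing piece found at level $m$'' does not supply this construction, and without it the Baire-category argument in $D^n$ cannot get started.
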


This lemma gives an answer to a question by Li et al. \cite{LLT} in the zero-dimensional case. By dropping the transitivity assumption through Lemma 4.1, we obtain the following theorem.

\begin{thm}
Let $f\colon X\to X$ be a continuous map with the shadowing property. If $\dim X=0$ and $h_{top}(f)>0$, then there is a Mycielski subset $S$ of $X$, which is distributionally $n$-$\delta_n$-scrambled for all $n\ge2$ for some $\delta_n>0$.
\end{thm}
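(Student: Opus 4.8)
The plan is to reduce the non-transitive situation to Lemma 1.3 by passing to a suitable chain component. Observe first that distributional $n$-$\delta$-scrambledness of a tuple depends only on the $f$-orbits of its entries and on the metric $d$, so for a closed invariant set $Y\subseteq X$ with $f(Y)=Y$, a subset $S\subseteq Y$ is distributionally $n$-$\delta_n$-scrambled for $f$ if and only if it is so for $f|_Y$; moreover a Mycielski subset of $Y$ is automatically a Mycielski subset of $X$. Hence it suffices to produce such a $Y$ for which $(Y,f|_Y)$ is transitive, has the shadowing property, satisfies $\dim Y=0$, and has $h_{top}(f|_Y)>0$: Lemma 1.3 applied to $(Y,f|_Y)$ then furnishes a (dense) Mycielski subset $S\subseteq Y$ that is distributionally $n$-$\delta_n$-scrambled for all $n\ge 2$, and this $S$ is the set required for $f$.

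For $Y$ I would take a chain component carrying positive entropy, that is, an element $D\in\mathcal{D}(f)$ lying in the chain recurrent set with $h_{top}(f|_D)>0$. The first step is an entropy-localization argument. By the variational principle and the ergodic decomposition, $h_{top}(f)=\sup_{\mu}h_\mu$ over ergodic invariant measures $\mu$; since the support of an ergodic measure is chain transitive, each such $\mu$ is supported in a single chain component, so $h_\mu\le h_{top}(f|_{D_\mu})$. Combining this with the obvious reverse inequality yields $h_{top}(f)=\sup_{D}h_{top}(f|_D)$, the supremum being taken over chain components $D$. From $h_{top}(f)>0$ we therefore obtain a chain component $D_0$ with $h_{top}(f|_{D_0})>0$; here $D_0$ is closed and invariant with $f(D_0)=D_0$, and $\dim D_0=0$ because $D_0\subseteq X$.

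The heart of the argument is the second step: checking that $(D_0,f|_{D_0})$ is transitive and has the shadowing property, so that Lemma 1.3 becomes applicable. This is where $\dim X=0$ and the Good--Meddaugh machinery enter. Using Lemma 1.2 to present $(X,f)$ as an inverse limit of SFTs satisfying MLC and analyzing how this presentation restricts to a chain component, one shows (this is the content of Lemma 4.1) that $D_0$ is itself an inverse limit of transitive SFTs along a cofinal subsequence that still satisfies MLC; Lemma 1.1 then yields the shadowing property for $f|_{D_0}$, while the chain transitivity of a chain component upgrades to genuine transitivity precisely because $f|_{D_0}$ has shadowing, as an $\epsilon$-chain joining two prescribed nonempty open subsets of $D_0$ can be shadowed by a true orbit. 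I expect this step to be the main obstacle, since it requires controlling how MLC and the irreducible-component structure of the SFTs behave under restriction to a single chain component, rather than reducing to any one clean estimate; the entropy-localization step, by contrast, is comparatively routine.

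Finally, with $(D_0,f|_{D_0})$ transitive, zero-dimensional, of positive entropy, and with shadowing, Lemma 1.3 provides a dense Mycielski subset $S\subseteq D_0$ that is distributionally $n$-$\delta_n$-scrambled for all $n\ge 2$ for suitable $\delta_n>0$. By the reduction in the first paragraph, $S$ is a Mycielski subset of $X$ with the required scrambling for $f$, which completes the proof.
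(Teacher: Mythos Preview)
Your overall strategy---pass to a single chain component on which Lemma~1.3 applies---is exactly the paper's, and your first-paragraph reduction is fine. But the way you split the second step creates a real gap. You first locate, via the variational principle, a specific chain component $D_0$ with $h_{top}(f|_{D_0})>0$, and then claim that ``this is the content of Lemma~4.1'' that \emph{this same} $D_0$ is an MLC inverse limit of transitive SFTs and hence has shadowing. That is not what Lemma~4.1 says or proves. In the paper's proof one starts from the positive-entropy component $C^\dagger=[C_\ast]$, but the induced inverse sequence $(C_n)_{n\ge1}$ of SFT chain components need not satisfy MLC(1); the real work is to construct a \emph{new} coherent sequence $(C'_m)_{m\ge n}$, yielding a possibly different chain component $C=[C''_\ast]$, for which MLC(1) does hold and which still carries positive entropy (because $\pi_n^{n+1}(C_{n+1})\subset\pi_n^{n+1}(C'_{n+1})$). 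Shadowing is then obtained for $C$, not for $C^\dagger$. So the correct shape of the argument is not ``find $D_0$ with positive entropy, then check $D_0$ has shadowing,'' but rather ``Lemma~4.1 produces in one stroke some $C\in\mathcal{C}(f)$ with both $h_{top}(f|_C)>0$ and the shadowing property''; your final paragraph then applies verbatim with $C$ in place of $D_0$.

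A minor notational slip: you write ``an element $D\in\mathcal{D}(f)$'' for a chain component, but in this paper $\mathcal{C}(f)$ denotes the set of chain components, while $\mathcal{D}(f)$ is the finer Richeson--Wiseman partition of a chain transitive system.
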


Let
\begin{itemize}
\item $\mathcal{C}_{sh}(X)=\{f\in\mathcal{C}(X)\colon\text{$f$ has the shadowing property}\}$,
\item $\mathcal{C}_{cr^0}(X)=\{f\in\mathcal{C}(X)\colon\text{$\dim CR(f)=0$}\}$,
\item $\mathcal{C}_{h>0}(X)=\{f\in\mathcal{C}(X)\colon\text{$h_{top}(f)>0$}\}$,
\end{itemize}
and let $\mathcal{H}_\sigma(X)=\mathcal{H}(X)\cap\mathcal{C}_\sigma(X)$ for $\sigma\in\{sh,cr^0, {h>0}\}$. Note that for any
\[
f\in\mathcal{C}_{sh}(X)\cap\mathcal{C}_{cr^0}(X)\cap\mathcal{C}_{h>0}(X),
\]
the restriction $f|_{CR(f)}\colon CR(f)\to CR(f)$ has the following properties:
\begin{itemize}
\item the shadowing property,
\item $\dim CR(f)=0$,
\item $h_{top}(f|_{CR(f)})=h_{top}(f)>0$.
\end{itemize}
By applying Theorem 1.2 to $f|_{CR(f)}$, we obtain $f\in\mathcal{C}_{{\rm DC1}_\ast}(X)$; therefore, 
\[
\mathcal{C}_{sh}(X)\cap\mathcal{C}_{cr^0}(X)\cap\mathcal{C}_{h>0}(X)\subset\mathcal{C}_{{\rm DC1}_\ast}(X)
\]
and so
\[
\mathcal{H}_{sh}(X)\cap\mathcal{H}_{cr^0}(X)\cap\mathcal{H}_{h>0}(X)\subset\mathcal{H}_{{\rm DC1}_\ast}(X).
\]
Let $M$ be a compact smooth manifold without boundary. Then, Theorem 1.1 follows from those claims and the following results in the literature:
\begin{itemize}
\item shadowing
\begin{itemize}
\item $\mathcal{C}_{sh}(M)$ is a residual subset of $\mathcal{C}(M)$ \cite{MO},
\item $\mathcal{H}_{sh}(M)$ is a residual subset of $\mathcal{H}(M)$ \cite{PP},
\end{itemize}
\item chain recurrence
\begin{itemize}
\item $\mathcal{C}_{cr^0}(M)$ is a residual subset of $\mathcal{C}(M)$ \cite{KOU},
\item $\mathcal{H}_{cr^0}(M)$ is a residual subset of $\mathcal{H}(M)$ \cite{AHK},
\end{itemize}
\item topological entropy
\begin{itemize}
\item $\mathcal{C}_{h>0}(M)$ is a residual subset of $\mathcal{C}(M)$ \cite{Y},
\item If $\dim M>1$, then $\mathcal{H}_{h>0}(M)$ is a residual subset of $\mathcal{H}(M)$ \cite{Y}.
\end{itemize}
\end{itemize}

The proof also gives an insight into how the distributionally scrambled sets exist in the chain recurrent set. As in the proof of Lemma 4.1, any chain component with positive topological entropy is approximated by one with the shadowing property. Note that the component is partitioned into the equivalence classes of a relation by Richeson and Wiseman. Since the quotient dynamics with respect to the relation is minimal \cite{RW}, Lemma 1.3 implies that dense equivalence classes densely contain distributionally scrambled Mycielski sets within them. It deepens the understanding about the chaotic aspect of the $C^0$-generic dynamics on manifolds.

This paper consists of six sections. The basic notations, definitions, and facts are briefly collected in Section 2. In Section 3, we prove some preparatory lemmas. In Section 4, we prove Lemma 4.1 to reduce Theorem 1.2 to Lemma 1.3. Lemma 1.3 is proved in Section 5. In Section 6, as a bi-product of the proof of Lemma 4.1, we answer a question by Moothathu \cite{Moo} in the zero-dimensional case, and give a related counter-example showing that the chain components with the shadowing property can be relatively few.

\section{Preliminaries}

In this section, we collect some basic definitions, notations, facts, and prove some lemmas which will be used in the sequel.

\subsection{{\it Chains, cycles, pseudo-orbits, and the shadowing property}}

Given a continuous map $f\colon X\to X$, a finite sequence $(x_i)_{i=0}^{k}$ of points in $X$, where $k>0$ is a positive integer, is called a {\em $\delta$-chain} of $f$ if $d(f(x_i),x_{i+1})\le\delta$ for every $0\le i\le k-1$. A $\delta$-chain $(x_i)_{i=0}^{k}$ of $f$ is said to be a {\em $\delta$-cycle} of $f$ if $x_0=x_k$. Let $\xi=(x_i)_{i\ge0}$ be a sequence of points in $X$. For $\delta>0$, $\xi$ is called a {\em $\delta$-pseudo orbit} of $f$ if $d(f(x_i),x_{i+1})\le\delta$ for all $i\ge0$. For $\epsilon>0$, $\xi$ is said to be {\em $\epsilon$-shadowed} by $x\in X$ if $d(f^i(x),x_i)\leq \epsilon$ for all $i\ge 0$. We say that $f$ has the {\em shadowing property} if for any $\epsilon>0$, there is $\delta>0$ such that every $\delta$-pseudo orbit of $f$ is $\epsilon$-shadowed by some point of $X$.

\subsection{{\it Chain components and a relation}}

\subsubsection{{\it Chain recurrence and the chain transitivity}}

Given a continuous map $f\colon X\to X$, a point $x\in X$ is called a {\em chain recurrent point} for $f$ if for any $\delta>0$, there is a $\delta$-cycle $(x_i)_{i=0}^{k}$ of $f$ with $x_0=x_k=x$. We denote by $CR(f)$ the set of chain recurrent points for $f$. It is a closed $f$-invariant subset of $X$, and the restriction $f|_{CR(f)}\colon CR(f)\to CR(f)$ satisfies $CR(f|_{CR(f)})=CR(f)$. It is known that if $f$ has the shadowing property, then so does $f|_{CR(f)}$ \cite{Moo}. We call $f$ {\em chain recurrent} if $X=CR(f)$. For any $x,y\in X$ and $\delta>0$, the notation $x\rightarrow_{f,\delta} y$ means that there is a $\delta$-chain $(x_i)_{i=0}^k$ of $f$ with $x_0=x$ and $x_k=y$. Then, $f$ is said to be {\em chain transitive} if $x\rightarrow_{f,\delta} y$ for any $x,y\in X$ and $\delta>0$. We say that $f$ is {\em transitive} if for any two non-empty open subsets $U$, $V$of $X$, there is $n>0$ such that $f^n(U)\cap V\ne\emptyset$. If $f$ is transitive, then $f$ is chain transitive, and the converse holds when $f$ has the shadowing property.
 
\subsubsection{{\it Chain components}}

For any continuous map $f\colon X\to X$, $CR(f)$ admits a decomposition with respect to a relation $\leftrightarrow_f$ in $CR(f)^2=CR(f)\times CR(f)$ defined as follows: for any $x,y\in CR(f)$, $x\leftrightarrow_f y$ iff  $x\rightarrow_{f,\delta} y$ and $y\rightarrow_{f,\delta} x$ for every $\delta>0$. Note that $\leftrightarrow_f$ is a closed $(f\times f)$-invariant equivalence relation in $CR(f)^2$. An equivalence class $C$ of $\leftrightarrow_f$ is called a {\em chain component} for $f$. We denote by $\mathcal{C}(f)$ the set of chain components for $f$. Then, the following properties hold:
\begin{itemize}
\item[(1)] $CR(f)=\bigsqcup_{C\in\mathcal{C}(f)}C,$
\item[(2)] Every $C\in\mathcal{C}(f)$ is a closed $f$-invariant subset of $CR(f)$,
\item[(3)] $f|_C\colon C\to C$ is chain transitive for all $C\in\mathcal{C}(f)$.
\end{itemize}
Note that $f$ is chain transitive iff $f$ is chain recurrent and satisfies $\mathcal{C}(f)=\{X\}$.

\subsubsection{{\it A relation}}

Let $f\colon X\to X$ be a chain transitive map. For $\delta>0$ and a $\delta$-cycle $\gamma=(x_i)_{i=0}^k$ of $f$, $k$ is called the {\em length} of $\gamma$. Let $m=m(f,\delta)>0$ be the greatest common divisor of the lengths of $\delta$-cycles of $f$. We define a relation $\sim_{f,\delta}$ in $X^2$ by for any $x,y\in X$, $x\sim_{f,\delta}y$ iff there is a $\delta$-chain $(x_i)_{i=0}^k$ of $f$ with $x_0=x$, $x_k=y$, and $m|k$. Then, the following properties hold:
\begin{itemize}
\item[(1)] $\sim_{f,\delta}$ is an open and closed $(f\times f)$-invariant equivalence relation in $X^2$,
\item[(2)] For any $x\in X$ and $n\ge0$, $x\sim_{f,\delta}f^{mn}(x)$,
\item[(3)] There exists $N>0$ such that for any $x,y\in X$ with $x\sim_{f,\delta}y$ and $n\ge N$, there is a $\delta$-chain $(x_i)_{i=0}^k$ of $f$ with $x_0=x$, $x_k=y$, and $k=mn$.
\end{itemize}

Following \cite{RW}, define a relation $\sim_f$ in $X^2$ by for any $x,y\in X$, $x\sim_f y$ iff $x\sim_{f,\delta}y$ for every $\delta>0$. This is a closed $(f\times f)$-invariant equivalence relation in $X^2$. We denote by $\mathcal{D}(f)$ the set of equivalence classes of $\sim_f$. It gives a closed partition of $X$. A pair $(x,y)\in X^2$ is said to be {\em chain proximal} if for any $\delta>0$, there is a pair $((x_i)_{i=0}^k,(y_i)_{i=0}^k)$ of $\delta$-chains of $f$ such that $(x_0,y_0)=(x,y)$ and $x_k=y_k$. As claimed in \cite[Remark 8]{RW}, for any $(x,y)\in X^2$, $(x,y)$ is chain proximal iff $x\sim_f y$.

\subsection{{\it Inverse limit}}

\subsubsection{{\it Inverse limit spaces}}

Given an inverse sequence of continuous maps
\[
\pi=(\pi_n^{n+1}\colon X_{n+1}\to X_n)_{n\ge1},
\]
where $(X_n)_{n\ge1}$ is a sequence of compact metric spaces, define $\pi_n^m\colon X_m\to X_n$ by
\begin{equation*}
\pi_n^m=
\begin{cases}
id_{X_n}&\text{if $m=n$}\\
\pi_n^{n+1}\circ\pi_{n+1}^{n+2}\circ\cdots\circ\pi_{m-1}^m&\text{if $m>n$}
\end{cases}
\end{equation*}
for all $m\ge n\ge1$. Note that $\pi_n^l=\pi_n^m\circ\pi_m^l$ for any $l\ge m\ge n\ge1$. 
The {\em inverse limit space} $X=\lim_\pi X_n$ is defined by
\[
X=\{x=(x_n)_{n\ge1}\in\prod_{n\ge1}X_n\colon\pi_n^{n+1}(x_{n+1})=x_n,\forall n\ge1\},
\]
which is a compact metric space.

For any $n\ge1$, note that $\pi_n^m(X_m)\supset\pi_n^{m+1}(X_{m+1})$ for every $m\ge n$, and let
\[
\hat{X}_n=\bigcap_{m\ge n}\pi_n^m(X_m).
\]
By the compactness, we easily see that for any $n\ge1$ and $x\in X_n$, $x\in\hat{X}_n$ iff there is a sequence
\[
(x_m)_{m\ge n}\in\prod_{m\ge n}X_m
\]
with $\pi_m^{m+1}(x_{m+1})=x_m$ for every $m\ge n$. For each $n\ge1$, $\pi_n^{n+1}(\hat{X}_{n+1})=\hat{X}_n$, i.e., $\hat{\pi}_n^{n+1}=(\pi_n^{n+1})|_{\hat{X}_{n+1}}\colon\hat{X}_{n+1}\to\hat{X}_n$ is surjective. Let
\[
\hat{\pi}=(\hat{\pi}_n^{n+1}\colon\hat{X}_{n+1}\to\hat{X}_n)_{n\ge1}
\]
and $\hat{X}=\lim_{\hat{\pi}}\hat{X}_n$. Since any $x=(x_n)_{n\ge1}\in X$ satisfies $x_n\in\hat{X}_n$ for every $n\ge1$, we see that the inclusion $i\colon\hat{X}\to X$ is a homeomorphism.

\subsubsection{{\it The Mittag-Leffler Condition}}

Given $\pi=(\pi_n^{n+1}\colon X_{n+1}\to X_n)_{n\ge1}$, an inverse sequence of continuous maps, $\pi$ is said to satisfy the {\em Mittag-Leffler Condition} (MLC) if for any $n\ge1$, there is $N\ge n$ such that $\pi_n^N(X_N)=\pi_n^m(X_m)$ for all $m\ge N$. We say that $\pi$ satisfies MLC(1) if
\[
\pi_n^{n+1}(X_{n+1})=\pi_n^{n+2}(X_{n+2})
\]
for any $n\ge1$.

\begin{lem}
Let $\pi=(\pi_n^{n+1}\colon X_{n+1}\to X_n)_{n\ge1}$ be an inverse sequence of continuous maps. Then, the following properties are equivalent:
\begin{itemize}
\item[(1)] $\pi$ satisfies MLC(1),
\item[(2)] For any $n\ge1$ and $m\ge n+1$, $\pi_n^{n+1}(X_{n+1})=\pi_n^m(X_m)$,
\item[(3)] For every $n\ge1$, $\hat{X}_n=\pi_n^{n+1}(X_n)$.
\end{itemize}
\end{lem}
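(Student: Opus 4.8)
The plan is to establish the cyclic chain of implications $(1)\Rightarrow(2)\Rightarrow(3)\Rightarrow(1)$, with everything resting on one elementary monotonicity observation already recorded in the text: since $\pi_n^{m+1}=\pi_n^m\circ\pi_m^{m+1}$, we have $\pi_n^{m+1}(X_{m+1})\subseteq\pi_n^m(X_m)$ for all $m\ge n$. Thus for each fixed $n$ the family $(\pi_n^m(X_m))_{m\ge n}$ is a nested decreasing sequence of compact sets, and its intersection is by definition $\hat{X}_n$; in particular $\hat{X}_n\subseteq\pi_n^m(X_m)$ for every $m\ge n$. I will use this nesting repeatedly, and all implications will be proved for a fixed but arbitrary $n\ge1$.

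The heart of the argument is $(1)\Rightarrow(2)$. I would prove by induction on $m\ge n+1$ that $\pi_n^m(X_m)=\pi_n^{n+1}(X_{n+1})$, the base case $m=n+1$ being trivial. For the inductive step it suffices to establish the one-step stabilization $\pi_n^m(X_m)=\pi_n^{m+1}(X_{m+1})$ for every $m\ge n+1$. The key move is to propagate $\mathrm{MLC}(1)$ downward: applying $\mathrm{MLC}(1)$ at level $m-1$ (legitimate since $m-1\ge n\ge1$) gives $\pi_{m-1}^m(X_m)=\pi_{m-1}^{m+1}(X_{m+1})$ as subsets of $X_{m-1}$; applying the map $\pi_n^{m-1}$ to both sides and invoking the composition identities $\pi_n^{m-1}\circ\pi_{m-1}^m=\pi_n^m$ and $\pi_n^{m-1}\circ\pi_{m-1}^{m+1}=\pi_n^{m+1}$ yields exactly $\pi_n^m(X_m)=\pi_n^{m+1}(X_{m+1})$. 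Chaining these one-step equalities from index $n+1$ onward produces $(2)$.

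For $(2)\Rightarrow(3)$: under $(2)$ the decreasing sequence $(\pi_n^m(X_m))_{m\ge n}$ is constant equal to $\pi_n^{n+1}(X_{n+1})$ from index $n+1$ on, the only possibly larger term being $\pi_n^n(X_n)=X_n\supseteq\pi_n^{n+1}(X_{n+1})$, so the intersection satisfies $\hat{X}_n=\pi_n^{n+1}(X_{n+1})$, which is $(3)$. Finally $(3)\Rightarrow(1)$ follows from monotonicity: $(3)$ gives $\pi_n^{n+1}(X_{n+1})=\hat{X}_n\subseteq\pi_n^{n+2}(X_{n+2})\subseteq\pi_n^{n+1}(X_{n+1})$, forcing $\pi_n^{n+1}(X_{n+1})=\pi_n^{n+2}(X_{n+2})$, i.e. $\mathrm{MLC}(1)$. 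The main (and essentially only) obstacle is spotting the downward-propagation trick in $(1)\Rightarrow(2)$, which upgrades the purely \emph{consecutive} equality of $\mathrm{MLC}(1)$ into stabilization across all indices; once that is in hand, the remaining implications are immediate consequences of the nesting of the sets $\pi_n^m(X_m)$.
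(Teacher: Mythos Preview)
Your proof is correct and essentially identical to the paper's own argument: the paper also proves $(1)\Rightarrow(2)$ by induction, using exactly your ``downward propagation'' move of applying MLC(1) at level $m-1$ and pushing forward with $\pi_n^{m-1}$, and handles $(2)\Rightarrow(3)$ and the reverse direction via the same nesting observation. The only cosmetic difference is that the paper organizes the equivalences as $(1)\Leftrightarrow(2)$ and $(2)\Leftrightarrow(3)$ rather than your cyclic chain, but the content is the same.
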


\begin{proof}
The implication $(1)\Rightarrow(2)$: we use an induction on $m$. For $m=n+1$, $\pi_n^{n+1}(X_{n+1})=\pi_n^m(X_m)$ is trivially true. Assume $\pi_n^{n+1}(X_{n+1})=\pi_n^m(X_m)$ for some $m\ge n+1$. Then, we have
\[
\pi_n^{m+1}(X_{m+1})=\pi_n^{m-1}(\pi_{m-1}^{m+1}(X_{m+1}))=\pi_n^{m-1}(\pi_{m-1}^m(X_m))=\pi_n^m(X_m)=\pi_n^{n+1}(X_n),
\]
completing the induction.
$ $\newline
$(2)\Rightarrow(1)$: Put $m=n+2$ in (2).
$ $\newline
$(2)\Rightarrow(3)$: (2) implies
\[
\hat{X}_n=\pi_n^n(X_n)\cap\bigcap_{m\ge n+1}\pi_n^m(X_m)=X_n\cap\pi_n^{n+1}(X_{n+1})=\pi_n^{n+1}(X_{n+1})
\]
for every $n\ge1$.
$ $\newline
$(3)\Rightarrow(2)$: Since $\pi_n^{n+1}(X_{n+1})\supset\pi_n^{n+2}(X_{n+2})\supset\cdots\supset\hat{X}_n$, $\hat{X}_n=\pi_n^{n+1}(X_{n+1})$ implies $\pi_n^m(X_m)=\hat{X}_n=\pi_n^{n+1}(X_{n+1})$ for any $m\ge n+1$, completing the proof.
\end{proof}

\begin{rem}
\normalfont
The property (2) in the above lemma implies that $\pi$ satisfies MLC.
\end{rem}

\begin{lem}
Let $\pi=(\pi_n^{n+1}\colon X_{n+1}\to X_n)_{n\ge1}$ be an inverse sequence of continuous maps. If $\pi$ satisfies MLC, then there is a sequence $1\le n(1)<n(2)<\cdots$ such that, letting $\pi'=(\pi_{n(j)}^{n(j+1)}\colon X_{n(j+1)}\to X_{n(j)})_{j\ge1}$, $\pi'$ satisfies MLC(1).
\end{lem}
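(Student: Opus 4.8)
The plan is to exploit the fact that MLC says exactly that, for each fixed $n$, the decreasing sequence of images $\pi_n^m(X_m)$ ($m\ge n$) stabilizes after finitely many steps, its eventual value necessarily being $\hat X_n=\bigcap_{m\ge n}\pi_n^m(X_m)$. Concretely, for each $n\ge1$ I would first fix some $N(n)\ge n$ with $\pi_n^m(X_m)=\hat X_n$ for all $m\ge N(n)$; such an $N(n)$ exists because MLC furnishes an $N$ beyond which the images are constant, and by the monotonicity $\pi_n^m(X_m)\supset\pi_n^{m+1}(X_{m+1})$ this constant value must be the intersection $\hat X_n$.

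Next I would build the subsequence recursively. Set $n(1)=1$, and having defined $n(j)$, put $n(j+1)=\max\{n(j)+1,\,N(n(j))\}$. This choice guarantees simultaneously that $n(j+1)>n(j)$ (so the subsequence is strictly increasing) and that $n(j+1)\ge N(n(j))$ (so we have jumped past the stabilization stage for the level $n(j)$).

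It then remains to verify MLC(1) for $\pi'$, that is, $\pi_{n(j)}^{n(j+1)}(X_{n(j+1)})=\pi_{n(j)}^{n(j+2)}(X_{n(j+2)})$ for every $j\ge1$. Here I would first use the composition identity $\pi_n^l=\pi_n^m\circ\pi_m^l$ to observe that the double bonding map of $\pi'$ is $(\pi')_j^{j+1}\circ(\pi')_{j+1}^{j+2}=\pi_{n(j)}^{n(j+1)}\circ\pi_{n(j+1)}^{n(j+2)}=\pi_{n(j)}^{n(j+2)}$, so the two sides of the desired equality are genuinely images of the original sequence at levels $m=n(j+1)$ and $m=n(j+2)$. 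Since $n(j+2)>n(j+1)\ge N(n(j))$, the stabilization property applies to both, giving $\pi_{n(j)}^{n(j+1)}(X_{n(j+1)})=\hat X_{n(j)}=\pi_{n(j)}^{n(j+2)}(X_{n(j+2)})$, which is exactly MLC(1).

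This is essentially a cofinality extraction, so I expect no analytic difficulty. The only point requiring care is the bookkeeping: one must ensure that a single cutoff $N(n(j))$ controls the images at both consecutive indices $n(j+1)$ and $n(j+2)$ at once, which is precisely what the strict monotonicity built into the recursion provides. It is also worth recording explicitly that the eventual common value is $\hat X_{n(j)}$, since this is what forces the two images appearing in MLC(1) to be literally equal rather than merely nested.
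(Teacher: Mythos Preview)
Your proposal is correct and follows essentially the same approach as the paper: both arguments recursively choose $n(j+1)$ large enough that the images $\pi_{n(j)}^m(X_m)$ have stabilized by level $m=n(j+1)$, and then observe that this forces equality at the two consecutive subsequence levels $n(j+1)$ and $n(j+2)$. The only cosmetic differences are that the paper anchors the recursion at an auxiliary index $n(0)=1$ rather than $n(1)=1$, and does not explicitly name the stabilized value as $\hat X_{n(j)}$.
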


\begin{proof}
Put $n(0)=1$. Inductively, define a sequence $1=n(0)<n(1)<n(2)<\cdots$ as follows: given $j\ge0$ and $n(j)$, take $n(j+1)>n(j)$ such that $\pi_{n(j)}^{n(j+1)}(X_{n(j+1)})=\pi_{n(j)}^m(X_m)$ for every $m\ge n(j+1)$. Then, for each $j\ge1$, $\pi_{n(j)}^{n(j+1)}(X_{n(j+1)})=\pi_{n(j)}^{n(j+2)}(X_{n(j+2)})$ since $n(j+2)>n(j+1)$, implying that $\pi'$ satisfies MLC(1).
\end{proof}

\subsubsection{{\it Equivariance, factor, and the topological conjugacy}}

Given two continuous maps $f\colon X\to X$ and $g\colon Y\to Y$, where $X$ and $Y$ are compact metric spaces, a continuous map $\pi\colon X\to Y$ is said to be {\em equivariant} if $g\circ\pi=\pi\circ f$, and such $\pi$ is also denoted as $\pi\colon(X,f)\to(Y,g)$. An equivariant map $\pi\colon(X,f)\to(Y,g)$ is called a {\em factor map} (resp.\:{\em topological conjugacy}) if it is surjective (resp.\:a homeomorphism). Two systems $(X,f)$ and $(Y,g)$ are said to be {\em topologically conjugate} if there is a topological conjugacy $h\colon(X,f)\to(Y,g)$.

\subsubsection{{\it Inverse limit systems}}

For an inverse sequence of equivariant maps
\[
\pi=(\pi_n^{n+1}\colon(X_{n+1},f_{n+1})\to(X_n,f_n))_{n\ge1},
\]
the {\em inverse limit system} $(X,f)=\lim_\pi(X_n,f_n)$ is well-defined by $X=\lim_\pi X_n$, and $f(x)=(f_n(x_n))_{n\ge1}$ for all $x=(x_n)_{n\ge1}\in X$.

For every $n\ge1$, note that $\hat{X}_n$ is a closed $f_n$-invariant subset of $X_n$, and let $\hat{f}_n=(f_n)|_{\hat{X}_n}\colon\hat{X}_n\to\hat{X}_n$. For all $n\ge 1$, $\hat{\pi}_n^{n+1}=(\pi_n^{n+1})|_{\hat{X}_{n+1}}\colon\hat{X}_{n+1}\to\hat{X}_n$ gives a factor map
\[
\hat{\pi}_n^{n+1}\colon(\hat{X}_{n+1},\hat{f}_{n+1})\to(\hat{X}_n,\hat{f}_n).
\]
Let 
\[
\hat{\pi}=(\hat{\pi}_n^{n+1}\colon(\hat{X}_{n+1},\hat{f}_{n+1})\to(\hat{X}_n,\hat{f}_n))_{n\ge1}
\]
and $(\hat{X},\hat{f})=\lim_{\hat{\pi}}(\hat{X}_n,\hat{f}_n)$. Then, the inclusion $i\colon\hat{X}\to X$ is a topological conjugacy $i\colon(\hat{X},\hat{f})\to(X,f)$.

\begin{lem}
Let $\pi=(\pi_n^{n+1}\colon(X_{n+1},f_{n+1})\to(X_n,f_n))_{n\ge1}$ be an inverse sequence of equivariant maps and let $(X,f)=\lim_\pi(X_n,f_n)$. If $f_n\colon X_n\to X_n$ is chain recurrent (resp.\:chain transitive) for each $n\ge1$, then $f\colon X\to X$ is chain recurrent (resp.\:chain transitive).
\end{lem}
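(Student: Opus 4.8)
The plan is to prove the chain recurrent case; the chain transitive case is entirely analogous, replacing cycles from a point to itself by chains between two prescribed points. Fix compatible metrics $d_n$ on each $X_n$ and equip $X=\lim_\pi X_n$ with the metric $d(x,y)=\sum_{n\ge1}2^{-n}\min\{d_n(x_n,y_n),1\}$, which induces the inverse-limit topology. I will use two standard features of this metric: the coordinate projections $p_n\colon X\to X_n$, $p_n(x)=x_n$, are uniformly continuous and equivariant ($f_n\circ p_n=p_n\circ f$); and the tail is uniformly small, so that for a $\delta$-chain in $X$ only finitely many coordinates need to be controlled.

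First I would reduce the problem to a single finite level. Fix $x=(x_n)_{n\ge1}\in X$ and $\delta>0$, and choose $N$ with $\sum_{n>N}2^{-n}<\delta/2$. Using equivariance $f_n\circ\pi_n^N=\pi_n^N\circ f_N$ together with uniform continuity of $\pi_n^N$ and $f_n$ for $n\le N$, I would pick $\delta'>0$ so small that whenever $w,w'\in X_N$ satisfy $d_N(f_N(w),w')\le\delta'$, the level-$n$ errors $d_n(\pi_n^N f_N(w),\pi_n^N w')$ for $n\le N$ contribute less than $\delta/2$ to $d$. Granting a suitable $\delta'$-cycle at level $N$, lifting it to $X$ and bounding the tail by $\sum_{n>N}2^{-n}<\delta/2$ will then produce a genuine $\delta$-cycle of $f$ through $x$.

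The main obstacle is that $p_N$ need not be onto $X_N$: its image is $\hat X_N=\bigcap_{m\ge N}\pi_N^m(X_m)$, so a cycle obtained directly from chain recurrence of $f_N$ on $X_N$ may pass through points that do not lift to $X$. I would therefore realize the required cycle inside $\hat X_N$, and this is where the hypotheses at all higher levels enter. Since $x_N\in\hat X_N$ and the compact sets $\pi_N^m(X_m)$ decrease to $\hat X_N$, their Hausdorff distance to $\hat X_N$ tends to $0$; fix $M\ge N$ with this distance at most a small $\eta$. Applying chain recurrence of $f_M$ to $x_M\in X_M$ and projecting the resulting cycle through $x_M$ by $\pi_N^M$ yields a cycle through $x_N$ (as $\pi_N^M(x_M)=x_N$) inside $\pi_N^M(X_M)$ with level-$N$ error at most $\delta'/2$; replacing each interior vertex by a point of $\hat X_N$ within $\eta$, and keeping the two endpoints equal to $x_N\in\hat X_N$, gives, for $\eta$ small relative to the modulus of continuity of $f_N$, a $\delta'$-cycle through $x_N$ lying entirely in $\hat X_N$.

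Finally I would lift this cycle. As $p_N\colon X\to\hat X_N$ is surjective (every point of $\hat X_N$ is the $N$-th coordinate of some point of $X$), each vertex of the $\hat X_N$-cycle lifts to a point of $X$, with both endpoints lifted back to $x$. By the reduction of the second paragraph—the levels $n\le N$ are controlled by $\delta'$ through $z_n=\pi_n^N(z_N)$, and the levels $n>N$ by the tail estimate, irrespective of the lifts' higher coordinates—the lifted sequence is a $\delta$-cycle of $f$ through $x$. Since $x$ and $\delta$ were arbitrary, $f$ is chain recurrent. For the chain transitive case one runs the same argument with a second point $y=(y_n)\in X$, using chain transitivity of $f_M$ to join $x_M$ to $y_M$ and lifting the endpoints to $x$ and $y$.
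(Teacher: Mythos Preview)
Your proof is correct. The key insight---that the obstruction is the possible non-surjectivity of $p_N$, and that one must arrange the level-$N$ cycle to lie in $\hat X_N=\bigcap_{m\ge N}\pi_N^m(X_m)$ before lifting---is exactly what drives the paper's argument as well.

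The organizational difference is that the paper packages the same ideas more abstractly: it first observes that each $\hat f_n=(f_n)|_{\hat X_n}$ is chain recurrent (resp.\ chain transitive), being a nested intersection of equivariant images of chain recurrent (resp.\ chain transitive) systems, then invokes the inverse sequence $\hat\pi$ of \emph{factor} maps $(\hat X_{n+1},\hat f_{n+1})\to(\hat X_n,\hat f_n)$ and the conjugacy $(X,f)\cong(\hat X,\hat f)$. In effect the paper reduces to the surjective-bonding-map case and then asserts the (easy) conclusion there; your argument carries out that easy case explicitly, with the Hausdorff approximation and vertex-replacement steps doing the work that the paper hides in the phrase ``$\hat f_n$ is chain recurrent.'' Your version is more self-contained; the paper's is shorter but leans on the structural facts about $\hat X_n$ set up earlier in Section~2.3.
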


\begin{proof}
Let $n\ge1$. Then, for any $m\ge n$, since $f_m\colon X_m\to X_m$  is chain recurrent (resp.\:chain transitive), and $\pi_n^m\colon(X_m,f_m)\to(X_n,f_n)$ is an equivariant map,
\[
(f_n)|_{\pi_n^m(X_m)}\colon\pi_n^m(X_m)\to\pi_n^m(X_m)
\]
is chain recurrent (resp.\:chain transitive). Because $\hat{X}_n=\bigcap_{m\ge n}\pi_n^m(X_m)$,
\[
\hat{f}_n\colon\hat{X}_n\to\hat{X}_n
\]
is chain recurrent (resp.\:chain transitive). Since
\[
\hat{\pi}=(\hat{\pi}_n^{n+1}\colon(\hat{X}_{n+1},\hat{f}_{n+1})\to(\hat{X}_n,\hat{f}_n))_{n\ge1}
\]
is a sequence of factor maps, we see that $\hat{f}\colon\hat{X}\to\hat{X}$ is chain recurrent (resp.\:chain transitive). Thus, $f\colon X\to X$ is chain recurrent (resp.\:chain transitive), because $(X,f)$ and $(\hat{X},\hat{f})$ are topologically conjugate, completing the proof.
\end{proof}

For an inverse sequence of equivariant maps
\[
\pi=(\pi_n^{n+1}\colon(X_{n+1},f_{n+1})\to(X_n,f_n))_{n\ge1}
\]
and a sequence $1\le n(1)<n(2)<\cdots$, $\pi_{n(j)}^{n(j+1)}\colon X_{n(j+1)}\to X_{n(j)}$ is an equivariant map for each $j\ge1$. Letting 
\[
\pi'=(\pi_{n(j)}^{n(j+1)}\colon(X_{n(j+1)},f_{n(j+1)})\to(X_{n(j)},f_{n(j)}))_{j\ge1}
\]
and $(Y,g)=\lim_{\pi'}(X_{n(j)},f_{n(j)})$, we have a topological conjugacy $h\colon(X,f)\to(Y,g)$ given by $h(x)=(x_{n(j)})_{j\ge1}$ for all $x=(x_n)_{n\ge1}\in X$. By this and Lemma 2.2, we obtain the following.

\begin{lem}
Let $\pi=(\pi_n^{n+1}\colon(X_{n+1},f_{n+1})\to(X_n,f_n))_{n\ge1}$ be an inverse sequence of equivariant maps. If $\pi$ satisfies MLC, then there is a sequence $1\le n(1)<n(2)<\cdots$ such that, letting $\pi'=(\pi_{n(j)}^{n(j+1)}\colon(X_{n(j+1)},f_{n(j+1)})\to(X_{n(j)},f_{n(j)}))_{j\ge1}$, $\pi'$ satisfies MLC(1), and $\lim_\pi(X_n,f_n)$ is topologically conjugate to $\lim_{\pi'}(X_{n(j)},f_{n(j)})$.
\end{lem}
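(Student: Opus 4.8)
The plan is to reduce the statement to the non-dynamical Lemma 2.2 together with the topological conjugacy already exhibited in the paragraph preceding this lemma. The crucial observation is that both MLC and MLC(1) are conditions on the images $\pi_n^m(X_m)\subset X_n$ of the bonding maps alone; they make no reference to the maps $f_n$. Consequently the dynamics is irrelevant to the choice of the index sequence, and I may select $(n(j))_{j\ge1}$ by forgetting it entirely.

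Concretely, I would first regard $\pi$ as an inverse sequence of continuous maps $(\pi_n^{n+1}\colon X_{n+1}\to X_n)_{n\ge1}$. Since $\pi$ satisfies MLC, Lemma 2.2 yields a strictly increasing sequence $1\le n(1)<n(2)<\cdots$ for which the derived sequence $\pi'=(\pi_{n(j)}^{n(j+1)}\colon X_{n(j+1)}\to X_{n(j)})_{j\ge1}$ satisfies MLC(1). Restoring the dynamics, each $\pi_{n(j)}^{n(j+1)}$ is a composition of the equivariant maps $\pi_k^{k+1}$, $n(j)\le k<n(j+1)$, hence is equivariant as a map $(X_{n(j+1)},f_{n(j+1)})\to(X_{n(j)},f_{n(j)})$; thus $\pi'$ is a bona fide inverse sequence of equivariant maps, and it still satisfies MLC(1) by the observation above, since that condition is unchanged by remembering the $f_n$. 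Finally, the conjugacy $h\colon(X,f)\to(Y,g)$, $h(x)=(x_{n(j)})_{j\ge1}$, displayed just before the lemma shows that $\lim_\pi(X_n,f_n)$ is topologically conjugate to $\lim_{\pi'}(X_{n(j)},f_{n(j)})$.

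I do not expect any genuine obstacle, since the lemma is little more than a repackaging of Lemma 2.2 in the equivariant category. The only points deserving a line of verification are that passing to a subsequence preserves equivariance (immediate, as compositions of equivariant maps are equivariant) and that $h$ is indeed a topological conjugacy; the latter is already asserted in the preceding paragraph, with continuity and the intertwining relation holding coordinatewise and bijectivity following from the fact that each dropped coordinate $x_k$ is uniquely recovered as $x_k=\pi_k^{n(j+1)}(x_{n(j+1)})$ for $n(j)\le k\le n(j+1)$.
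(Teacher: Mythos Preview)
Your proposal is correct and follows exactly the paper's own approach: the paper states Lemma 2.4 as an immediate consequence of Lemma 2.2 together with the topological conjugacy $h(x)=(x_{n(j)})_{j\ge1}$ exhibited in the preceding paragraph, which is precisely what you do. Your additional remarks on preservation of equivariance under composition and on the bijectivity of $h$ simply spell out details the paper leaves implicit.
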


\subsection{{\it Subshifts}}

\subsubsection{{\it Subshifts of finite type}}

Let $S$ be a finite set with the discrete topology. The {\em shift map} $\sigma\colon S^\mathbb{N}\to S^\mathbb{N}$ is defined by $\sigma(x)=(x_{n+1})_{n\ge1}$ for all $x=(x_n)_{n\ge1}\in S^\mathbb{N}$. Note that $\sigma$ is continuous with respect to the product topology of $S^\mathbb{N}$. The product space $S^\mathbb{N}$ (and also $(S^\mathbb{N},\sigma)$) is called the (one-sided) {\em full-shift} over $S$.  A closed $\sigma$-invariant subset $X$ of $S^\mathbb{N}$ (and also the subsystem $(X,\sigma|_X)$ of $(S^\mathbb{N},\sigma)$) is called a {\em subshift}. A subshift $X$ of $S^\mathbb{N}$ (and also $(X,\sigma|_X)$ of $(S^\mathbb{N},\sigma)$) is called a {\em subshift of finite type} (abbrev.\:SFT) if there are $N>0$ and $F\subset S^{N+1}$ such that for any $x=(x_n)_{n\ge1}\in S^\mathbb{N}$, $x\in X$ iff $(x_i,x_{i+1},\dots,x_{i+N})\in F$ for all $i\ge1$. The shift map $\sigma\colon S^\mathbb{N}\to S^\mathbb{N}$ is positively expansive and has the shadowing property. We know that a subshift $X$ of $S^\mathbb{N}$ is of finite type iff $\sigma|_X\colon X\to X$ has the shadowing property \cite{AHi}.

\subsubsection{{\it Some properties of SFTs}}

Let $(X,\sigma|_X)$ be a SFT (of some full-shift over $S$) and put $f=\sigma|_X$. Then, $f$ has the following properties:
\begin{itemize}
\item[(1)] $CR(f)=\overline{Per(f)}$, where $Per(f)$ denotes the set of periodic points for $f$,
\item[(2)] For any $x\in X$, there is $y\in CR(f)$ such that $\lim_{n\to\infty}d(f^n(x),f^n(y))=0$.
\end{itemize}
In fact, these two properties are consequences of the positive expansiveness and the shadowing property of $f\colon X\to X$. Since the restriction $f|_{CR(f)}\colon CR(f)\to CR(f)$ is surjective and positively expansive, it is c-expansive. Also, it has the shadowing property. Applying \cite[Theorem 3.4.4]{AHi} to $f|_{CR(f)}$, we obtain
\begin{itemize}
\item[(3)] There is a finite set $\mathcal{C}$ of clopen $f$-invariant subsets of $CR(f)$ such that
\[
CR(f)=\bigsqcup_{C\in\mathcal{C}}C,
\]
and $f|_{C}\colon C\to C$ is transitive for every
$C\in\mathcal{C}$.
\end{itemize}
An element of $\mathcal{C}$ is called a {\em basic set}. We easily see that $\mathcal{C}=\mathcal{C}(f)$, i.e., the basic sets coincide with the chain components for $f$. For every 
$C\in\mathcal{C}$, $f|_C$ has the shadowing property, so $C$ (or $(C,f|_C)$) is a transitive SFT.

Consider the case where $f$ is transitive (or, $(X,f)$ is a transitive SFT). Then, we have $X=CR(f)$ and $\mathcal{C}=\mathcal{C}(f)=\{X\}$. Again by \cite[Theorem 3.4.4]{AHi}, $X$ admits a decomposition
\[
X=\bigsqcup_{i=0}^{m-1}f^i(D),
\]
where $m>0$ is a positive integer, such that $f^i(D)$, $0\le i\le m-1$, are clopen $f^m$-invariant subsets of $X$, and
\[
f^m|_{f^i(D)}\colon f^i(D)\to f^i(D)
\]
is mixing for every $0\le i\le m-1$. Here, a continuous map $g\colon Y\to Y$ is said to be {\em mixing} if for any two non-empty open subsets $U$, $V$ of $Y$, there is $N>0$ such that $g^n(U)\cap V\ne\emptyset$ for all $n\ge N$. In this case, we easily see that $\mathcal{D}(f)=\{f^i(D)\colon0\le i\le m-1\}$.

\section{Preparatory lemmas}

In this section, we prove some preparatory lemmas needed for the proof of main results. The first two lemmas give an expression of the chain recurrent set (resp. chain components) for the inverse limit system. 

\begin{lem}
Let $\pi=(\pi_n^{n+1}\colon(X_{n+1},f_{n+1})\to(X_n,f_n))_{n\ge1}$ be an inverse sequence of equivariant maps and let $(X,f)=\lim_\pi(X_n,f_n)$. Then,
\[
CR(f)=\{x=(x_n)_{n\ge1}\in X\colon x_n\in CR(f_n),\forall n\ge 1\}.
\]
\end{lem}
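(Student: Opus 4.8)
The plan is to prove the two inclusions separately, the inclusion $\subseteq$ being routine and the reverse inclusion carrying the real work. For $\subseteq$ I would first record the standard fact that a continuous equivariant map preserves chain recurrence of points: if $\phi\colon(X,f)\to(Y,g)$ is equivariant and $p\in CR(f)$, then for $\epsilon>0$ I choose $\delta>0$ from the uniform continuity of $\phi$, take a $\delta$-cycle $(a_i)_{i=0}^k$ of $f$ with $a_0=a_k=p$, and use $g\circ\phi=\phi\circ f$ to get $d(g(\phi(a_i)),\phi(a_{i+1}))=d(\phi(f(a_i)),\phi(a_{i+1}))\le\epsilon$; thus $(\phi(a_i))_{i=0}^k$ is an $\epsilon$-cycle of $g$ through $\phi(p)$, so $\phi(p)\in CR(g)$. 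Each coordinate projection $p_n\colon X\to X_n$ is continuous and equivariant since $f_n\circ p_n=p_n\circ f$, so $x\in CR(f)$ immediately forces $x_n=p_n(x)\in CR(f_n)$ for every $n$.

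For the reverse inclusion, fix $x=(x_n)_{n\ge1}\in X$ with $x_n\in CR(f_n)$ for every $n$, and fix a compatible metric $D(y,z)=\sum_{n\ge1}2^{-n}\min\{d_n(y_n,z_n),1\}$ on $X$, for which any target $\epsilon>0$ can be met by controlling only finitely many coordinates. Recall that $p_N(X)=\hat X_N$, so a point of $X_N$ is the $N$-th coordinate of some point of $X$ precisely when it lies in $\hat X_N$, and note $x_N\in\hat X_N$. Given $\epsilon>0$, I would pick $N$ with $\sum_{n>N}2^{-n}<\epsilon/2$, then produce a cycle of $f_N$ through $x_N$ lying in $\hat X_N$, lift each of its points $u_i$ to some $z_i\in X$ with $p_N(z_i)=u_i$ (possible since $u_i\in\hat X_N=p_N(X)$) and $z_0=z_k=x$, and check that $(z_i)$ is an $\epsilon$-cycle of $f$ through $x$: for $n\le N$ one has $d_n(f_n(z_{i,n}),z_{i+1,n})=d_n(\pi_n^N(f_N(u_i)),\pi_n^N(u_{i+1}))$, small by the cycle estimate at level $N$ and uniform continuity of $\pi_n^N$, while the coordinates $n>N$ contribute at most the tail $\epsilon/2$.

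The main obstacle is the middle step, namely $x_N\in CR(\hat f_N)$ with $\hat f_N=f_N|_{\hat X_N}$: chain recurrence of $x_N$ for $f_N$ need not keep cycles inside the invariant subset $\hat X_N$. I would recover it from the higher levels by compactness. For $\epsilon_0>0$, uniform continuity of $f_N$ fixes a tolerance $\eta>0$; since the nested compact sets $\pi_N^m(X_m)$ decrease to $\hat X_N$, some $\pi_N^m(X_m)$ lies in the $\eta$-neighbourhood of $\hat X_N$. Then $x_m\in CR(f_m)$ supplies a cycle through $x_m$ whose image under $\pi_N^m$ is, by equivariance and uniform continuity, a short cycle of $f_N$ through $x_N$ inside $\pi_N^m(X_m)$; replacing each point by one of $\hat X_N$ within $\eta$, and keeping the two endpoints equal to $x_N\in\hat X_N$, the triangle inequality turns it into an $\epsilon_0$-cycle of $\hat f_N$ through $x_N$. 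This yields $x_N\in CR(\hat f_N)$, which is exactly what the lifting step requires, and together with the first paragraph both inclusions hold.
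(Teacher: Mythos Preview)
Your argument is correct. Both inclusions are handled soundly; in particular, the ``middle step'' showing $x_N\in CR(\hat f_N)$ via the nested approximation $\pi_N^m(X_m)\searrow\hat X_N$ together with uniform continuity and the triangle inequality is fine, and the lifting of cycles from $\hat X_N$ to $X$ using $p_N(X)=\hat X_N$ works as you describe.

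The route, however, differs from the paper's. The paper does not build cycles directly: it restricts to $Y_n=CR(f_n)$, forms the sub-inverse-sequence $\tilde\pi$, observes that its inverse limit $(Y,g)$ is canonically identified with $(R,f|_R)$ (where $R$ is the right-hand side), and then invokes its earlier Lemma~2.3, which asserts that an inverse limit of chain recurrent systems is chain recurrent. That lemma in turn is proved by showing each $\hat f_n$ is chain recurrent (as the restriction to a nested intersection of chain recurrent invariant sets) and passing to the limit through the surjective bonding maps. So the paper packages the compactness/approximation idea you carry out by hand into a reusable lemma, and then the proof of the present statement is a one-line application. Your approach unpacks essentially the same mechanism in place: you work in the \emph{original} system, prove $x_N\in CR(\hat f_N)$ by the same nested-intersection argument, and lift explicitly. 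The trade-off is modularity versus self-containment: the paper's version separates concerns and reuses Lemma~2.3 elsewhere, whereas your version avoids introducing the restricted system $(Y_n,g_n)$ and needs no auxiliary lemma.
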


\begin{proof}
Let $R$ denote  the right-hand side of the equation. $CR(f)\subset R$ is clearly true.
Let us prove $R\subset CR(f)$. Note that $\pi_n^{n+1}(CR(f_{n+1}))\subset CR(f_n)$ for every $n\ge1$. Let $Y_n=CR(f_n)$, $g_n=(f_n)|_{Y_n}\colon Y_n\to Y_n$, and $\tilde{\pi}_n^{n+1}=(\pi_n^{n+1})|_{Y_{n+1}}\colon Y_{n+1}\to Y_n$ for each $n\ge1$. Consider the inverse sequence of equivariant maps
\[
\tilde{\pi}=(\tilde{\pi}_n^{n+1}\colon(Y_{n+1},g_{n+1})\to(Y_n,g_n))_{n\ge1}
\]
and let $(Y,g)=\lim_{\tilde{\pi}}(Y_n,g_n)$. Since $g_n$ is chain recurrent for all $n\ge1$, by Lemma 2.3, $g$ is chain recurrent. On the other hand, $R$ is a closed $f$-invariant subset of $X$ and satisfies $Y=R$. The inclusion $i\colon Y\to R$ gives a topological conjugacy $i\colon(Y,g)\to(R,f|_R)$, and so $f|_R\colon R\to R$ is chain recurrent, which clearly implies $R\subset CR(f)$; therefore, the lemma has been proved.
\end{proof}

Let $\pi=(\pi_n^{n+1}\colon(X_{n+1},f_{n+1})\to(X_n,f_n))_{n\ge1}$ be an inverse sequence of equivariant maps and let $(X,f)=\lim_\pi(X_n,f_n)$. Note that for any $n\ge1$ and $C_{n+1}\in\mathcal{C}(f_{n+1})$, there is $C_n\in\mathcal{C}(f_n)$ such that $\pi_n^{n+1}(C_{n+1})\subset C_n$. Let
\[
\mathcal{C}_\pi=\{C_\ast=(C_n)_{n\ge1}\in\prod_{n\ge1}\mathcal{C}(f_n)\colon \pi_n^{n+1}(C_{n+1})\subset C_n,\forall n\ge1\}.
\]
Also, for any $C_\ast=(C_n)_{n\ge1}\in\mathcal{C}_\pi$, let
\[
[C_\ast]=\{x=(x_n)_{n\ge1}\in X\colon x_n\in C_n,\forall n\ge1\}.
\]

\begin{lem}
$\mathcal{C}(f)=\{[C_\ast]\colon C_\ast\in\mathcal{C}_\pi\}$.
\end{lem}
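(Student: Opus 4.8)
The plan is to prove the set equality by characterizing the relation $\leftrightarrow_f$ coordinatewise. Concretely, I would establish the following claim: for any $x=(x_n)_{n\ge1}$ and $y=(y_n)_{n\ge1}$ in $CR(f)$, one has $x\leftrightarrow_f y$ if and only if $x_n\leftrightarrow_{f_n}y_n$ for every $n\ge1$. Granting this, the chain component of a point $x\in CR(f)$ is identified as follows: by Lemma 3.1, $x_n\in CR(f_n)$ for all $n$, so each $x_n$ lies in a unique $C_n\in\mathcal{C}(f_n)$, and the claim gives that the chain component of $x$ equals $\{y=(y_n)_{n\ge1}\in X\colon y_n\in C_n,\ \forall n\ge1\}=[C_\ast]$ with $C_\ast=(C_n)_{n\ge1}$. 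The compatibility $\pi_n^{n+1}(C_{n+1})\subset C_n$ (hence $C_\ast\in\mathcal{C}_\pi$) follows because $\pi_n^{n+1}(C_{n+1})$ lies in a single chain component of $f_n$, which must contain $x_n=\pi_n^{n+1}(x_{n+1})$ and therefore equals $C_n$. Thus every chain component is some $[C_\ast]$; conversely every $[C_\ast]$ is nonempty and equals the chain component of each of its points, which yields the asserted equality.

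For the forward implication of the claim, I would use that the canonical projection $p_n\colon X\to X_n$, $p_n((x_m)_{m\ge1})=x_n$, is equivariant, $p_n\circ f=f_n\circ p_n$, and uniformly continuous on the compact space $X$. Hence, given $\delta'>0$ there is $\delta>0$ so that $p_n$ sends every $\delta$-chain $(z_i)_{i=0}^k$ of $f$ to a $\delta'$-chain $(p_n(z_i))_{i=0}^k$ of $f_n$, because $d(f_n(p_n(z_i)),p_n(z_{i+1}))=d(p_n(f(z_i)),p_n(z_{i+1}))\le\delta'$ whenever $d(f(z_i),z_{i+1})\le\delta$. Therefore $x\rightarrow_{f,\delta}y$ forces $x_n\rightarrow_{f_n,\delta'}y_n$; since $\delta'>0$ was arbitrary and $x\leftrightarrow_f y$ supplies chains in both directions for every $\delta$, we conclude $x_n\leftrightarrow_{f_n}y_n$ for each $n$.

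For the reverse implication, which I expect to be the crux, I would observe that if $x_n\leftrightarrow_{f_n}y_n$ for all $n$, then $x$ and $y$ both lie in the set $[C_\ast]$, where $C_n$ is the shared chain component of $x_n$ and $y_n$. The key point is that $[C_\ast]$ is itself an inverse limit system: the restrictions $\pi_n^{n+1}|_{C_{n+1}}\colon(C_{n+1},f_{n+1}|_{C_{n+1}})\to(C_n,f_n|_{C_n})$ are well-defined equivariant maps (using $\pi_n^{n+1}(C_{n+1})\subset C_n$), and $[C_\ast]=\lim(C_n,f_n|_{C_n})$ with $f|_{[C_\ast]}$ the inverse limit map. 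Since each $f_n|_{C_n}$ is chain transitive (property (3) of chain components in Section 2.2.2), Lemma 2.3 shows $f|_{[C_\ast]}$ is chain transitive. As the $\delta$-chains of $f|_{[C_\ast]}$ are exactly the $\delta$-chains of $f$ whose points lie in $[C_\ast]$, this gives $x\rightarrow_{f,\delta}y$ and $y\rightarrow_{f,\delta}x$ for all $\delta>0$, that is, $x\leftrightarrow_f y$.

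I anticipate that the single clean idea making the reverse implication work is recognizing $[C_\ast]$ as $\lim(C_n,f_n|_{C_n})$, so that the inverse-limit stability of chain transitivity (Lemma 2.3) applies verbatim; the forward implication and the bookkeeping that the sets $[C_\ast]$ exhaust $CR(f)$ (via Lemma 3.1) are then routine. A minor point to verify along the way is that every $C_\ast\in\mathcal{C}_\pi$ yields a nonempty $[C_\ast]$, which holds since an inverse limit of nonempty compact spaces along continuous bonding maps is nonempty; this ensures that the family $\{[C_\ast]\colon C_\ast\in\mathcal{C}_\pi\}$ contains no spurious empty members.
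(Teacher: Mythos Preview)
Your proposal is correct and follows essentially the same approach as the paper: both arguments hinge on recognizing $[C_\ast]$ as the inverse limit $\lim(C_n,f_n|_{C_n})$ and invoking Lemma 2.3 to get chain transitivity of $f|_{[C_\ast]}$, together with the observation that the equivariant projections $p_n$ carry $\delta$-chains of $f$ to $\delta'$-chains of $f_n$. The only difference is packaging: you state the coordinatewise characterization $x\leftrightarrow_f y\iff x_n\leftrightarrow_{f_n}y_n$ for all $n$ explicitly and derive the set equality from it, whereas the paper works directly with the sets $[C_\ast]$ and chain components, but the mathematical content is the same.
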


\begin{proof}
For any $C_\ast=(C_n)_{n\ge1}\in\mathcal{C}_\pi$, $[C_\ast]$ is a closed $f$-invariant subset of $X$. We prove that $f|_{[C_\ast]}\colon[C_\ast]\to[C_\ast]$ is chain transitive. For each $n\ge1$, let $g_n=f|_{C_n}\colon C_n\to C_n$ and let
\[
\tilde{\pi}_n^{n+1}=(\pi_n^{n+1})|_{C_{n+1}}\colon C_{n+1}\to C_n.
\]
Consider the inverse sequence of equivariant maps
\[
\tilde{\pi}=(\tilde{\pi}_n^{n+1}\colon(C_{n+1},g_{n+1})\to(C_n,g_n))_{n\ge1}
\]
and let $(Y,g)=\lim_{\tilde{\pi}}(C_n,g_n)$. Since $g_n$ is chain transitive for all $n\ge1$, by Lemma 2.3, $g$ is chain transitive. On the other hand, we have $Y=[C_\ast]$. The inclusion $i\colon Y\to[C_\ast]$ gives a topological conjugacy $i\colon(Y,g)\to([C_\ast],f|_{[C_\ast]})$, which implies that $f|_{[C_\ast]}$ is chain transitive.

Given any $C_\ast=(C_n)_{n\ge1}\in\mathcal{C}_\pi$, from what is shown above, there is $C\in\mathcal{C}(f)$ such that $[C_\ast]\subset C$. Fix $x=(x_n)_{n\ge1}\in[C_\ast]$. Then, for every $y=(y_n)_{n\ge1}\in C$, we easily see that $\{x_n,y_n\}\in C_n$ for all $n\ge1$; therefore, $y\in[C_\ast]$. This implies $C\subset[C_\ast]$ and so $[C_\ast]=C$,
proving
\[
\{[C_\ast]\colon C_\ast\in\mathcal{C}_\pi\}\subset\mathcal{C}(f).
\]
To prove
\[
\mathcal{C}(f)\subset\{[C_\ast]\colon C_\ast\in\mathcal{C}_\pi\},
\]
for any $C\in\mathcal{C}(f)$, fix $x=(x_n)_{n\ge1}\in C$, and take $C_n\in\mathcal{C}(f_n)$ with $x_n\in C_n$ for each $n\ge1$. Then, $C_\ast=(C_n)_{n\ge1}\in\mathcal{C}_\pi$ and $x\in[C_\ast]\subset C$. Similarly as above, we obtain $C=[C_\ast]$, completing the proof.
\end{proof}

The next lemma gives an expression of $\mathcal{D}(f)$, which is introduced in Section 2.2, for the inverse limit system under MLC(1). Given
\[
\pi=(\pi_n^{n+1}\colon(X_{n+1},f_{n+1})\to(X_n,f_n))_{n\ge1},
\]
an inverse sequence of equivariant maps, let $(X,f)=\lim_\pi(X_n,f_n)$ and suppose that $f_n\colon X_n\to X_n$ is chain transitive for all $n\ge1$. Then, by Lemma 2.3, $f\colon X\to X$ is chain transitive. Note that for any $n\ge1$ and $D_{n+1}\in\mathcal{D}(f_{n+1})$, there is $D_n\in\mathcal{D}(f_n)$ such that $\pi_n^{n+1}(D_{n+1})\subset D_n$. Let
\[
\mathcal{D}_\pi=\{D_\ast=(D_n)_{n\ge1}\in\prod_{n\ge1}\mathcal{D}(f_n)\colon \pi_n^{n+1}(D_{n+1})\subset D_n,\forall n\ge1\}.
\]
Also, for any $D_\ast=(D_n)_{n\ge1}\in\mathcal{D}_\pi$, let
\[
[D_\ast]=\{x=(x_n)_{n\ge1}\in X\colon x_n\in D_n,\forall n\ge1\}.
\]

\begin{lem}
If $\pi$ satisfies MLC(1), then $\mathcal{D}(f)=\{[D_\ast]\colon D_\ast\in\mathcal{D}_\pi\}$.
\end{lem}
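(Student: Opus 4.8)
My plan is to reduce the lemma to the single coordinatewise characterization
\[
x\sim_f y\iff x_n\sim_{f_n}y_n\ \text{for all }n\ge1
\]
for $x=(x_n)_{n\ge1},y=(y_n)_{n\ge1}\in X$, and then to deduce the statement exactly as in the proof of Lemma 3.2. Since every $f_n$ is chain transitive, so is $f$ (Lemma 2.3), and the relations $\sim_{f_n},\sim_f$ are defined. Throughout I will use the chain-proximal description of $\sim$ recalled in Section 2.2.3 (Remark 8 of \cite{RW}): $a\sim_g b$ iff $(a,b)$ is chain proximal, i.e.\ for every $\delta>0$ there are $\delta$-chains of $g$ from $a$ and from $b$ of equal length with a common endpoint. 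I will also use the identity $\pi_n(X)=\hat{X}_n$ from Section 2.3.1 and, crucially, Lemma 2.1(3), which under MLC(1) gives $\hat{X}_n=\pi_n^{n+1}(X_{n+1})$.

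Granting the claim, the lemma follows along the lines of Lemma 3.2. For $D_\ast=(D_n)_{n\ge1}\in\mathcal{D}_\pi$ the set $[D_\ast]$ is closed and nonempty (an inverse limit of nonempty compacta under the well-defined bonding maps $(\pi_n^{n+1})|_{D_{n+1}}\colon D_{n+1}\to D_n$, which exist because $\pi_n^{n+1}(D_{n+1})\subset D_n$). If $x,y\in[D_\ast]$ then $x_n,y_n$ lie in the single $\sim_{f_n}$-class $D_n$, so $x_n\sim_{f_n}y_n$ for all $n$ and hence $x\sim_f y$ by the claim; conversely, if $x\in[D_\ast]$ and $x\sim_f y$ then $x_n\sim_{f_n}y_n$ forces $y_n\in D_n$, so $y\in[D_\ast]$. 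Thus each $[D_\ast]$ is a full $\sim_f$-class, giving $\{[D_\ast]\colon D_\ast\in\mathcal{D}_\pi\}\subset\mathcal{D}(f)$. For the reverse inclusion, given $D\in\mathcal{D}(f)$ fix $x=(x_n)_{n\ge1}\in D$ and let $D_n$ be the $\sim_{f_n}$-class of $x_n$; the easy direction below shows $\pi_n^{n+1}(D_{n+1})\subset D_n$, so $D_\ast\in\mathcal{D}_\pi$, and $D=[D_\ast]$ by the claim.

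The easy (downward) direction of the claim needs no MLC: since each projection $\pi_n\colon(X,f)\to(X_n,f_n)$ is equivariant, hence uniformly continuous, the image under $\pi_n$ of a sufficiently fine pair of chain-proximal $\delta$-chains of $f$ is a pair of chain-proximal chains of $f_n$ with a common endpoint; hence $x\sim_f y\Rightarrow x_n\sim_{f_n}y_n$. The same argument applied to the single equivariant map $\pi_n^{n+1}$ gives $\pi_n^{n+1}(D_{n+1})\subset D_n$ used above.

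The hard (upward) direction is where MLC(1) is essential, and is the main obstacle. Assume $x_n\sim_{f_n}y_n$ for all $n$; I must produce, for each $\delta>0$, chain-proximal $\delta$-chains of $f$ from $x$ and $y$ in $X$. The difficulty is that chains realizing $x_N\sim_{f_N}y_N$ at a fixed level $N$ may leave $\pi_N(X)=\hat{X}_N$ and so admit no lift to $X$. I will circumvent this by working one level higher: choose $N$ so large that the tail of the inverse-limit metric is negligible, realize $x_{N+1}\sim_{f_{N+1}}y_{N+1}$ by sufficiently fine chain-proximal chains in $X_{N+1}$, and project them by $\pi_N^{N+1}$ to obtain chain-proximal chains from $x_N$ and $y_N$ lying in $\pi_N^{N+1}(X_{N+1})$. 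By Lemma 2.1(3) this image equals $\hat{X}_N=\pi_N(X)$, so every point of these projected chains lifts to $X$. Lifting coordinate by coordinate, taking $x$ and $y$ as the initial lifts and a single common lift at the meeting point, yields chains in $X$ whose chain error is controlled on the first $N$ coordinates (by uniform continuity of the $\pi_n^N$) and negligible on the tail (by the choice of $N$); this produces chain-proximal $\delta$-chains of $f$ from $x$ and $y$, establishing $x\sim_f y$. Here MLC(1) enters exactly through the identification $\pi_N^{N+1}(X_{N+1})=\hat{X}_N$, which guarantees liftability; without it the projected chains could exceed $\pi_N(X)$ and the argument would break down.
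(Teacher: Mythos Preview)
Your proposal is correct and follows essentially the same approach as the paper: you realize chain proximality at level $N+1$, project by $\pi_N^{N+1}$, invoke MLC(1) via Lemma 2.1(3) to land in $\hat{X}_N=\pi_N(X)$, lift to $X$ with $x,y$ as initial points and a common lift at the meeting point, and then finish as in Lemma 3.2. The only difference is cosmetic---you first package the argument as the coordinatewise biconditional $x\sim_f y\iff x_n\sim_{f_n}y_n$ for all $n$, whereas the paper proves the needed direction directly for $x,y\in[D_\ast]$---but the substance is identical.
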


\begin{proof}
Let $D_\ast=(D_n)_{n\ge1}\in\mathcal{D}_\pi$ and let $x=(x_n)_{n\ge1},y=(y_n)_{n\ge1}\in[D_\ast]$. We prove that $(x,y)\in X^2$ is chain proximal for $f$. Fix any $N>0$ and $\delta>0$. Since $\{x_{N+1},y_{N+1}\}\subset D_{N+1}\in\mathcal{D}(f_{N+1})$, $(x_{N+1},y_{N+1})\in X_{N+1}^2$ is chain proximal for $f_{N+1}$, implying that there is a pair
\[
((x_{N+1}^{(i)})_{i=0}^{k},(y_{N+1}^{(i)})_{i=0}^{k})
\]
of $\delta$-chains of $f_{N+1}$ such that $(x_{N+1}^{(0)},y_{N+1}^{(0)})=(x_{N+1},y_{N+1})$ and $x_{N+1}^{(k)}=y_{N+1}^{(k)}$. Let $(z^{(0)},w^{(0)})=(x,y)\in X^2$ and note that
\[
(z_n^{(0)},w_n^{(0)})=(x_n,y_n)=(\pi_n^{N+1}(x_{N+1}),\pi_n^{N+1}(y_{N+1}))=(\pi_n^{N+1}(x_{N+1}^{(0)}),\pi_n^{N+1}(y_{N+1}^{(0)}))
\]
for every $1\le n\le N$. For each $0<i\le k$, since
\[
\{\pi_N^{N+1}(x_{N+1}^{(i)}),\pi_N^{N+1}(y_{N+1}^{(i)})\}\subset\pi_N^{N+1}(X_{N+1})=\hat{X}_N\quad
\]
by MLC(1) (see Lemma 2.1), there are $(z^{(i)},w^{(i)})\in X^2$, $0<i \le k-1$, and $z^{(k)}=w^{(k)}\in X$ such that
\[
(z_n^{(i)},w_n^{(i)})=(\pi_n^{N+1}(x_{N+1}^{(i)}),\pi_n^{N+1}(y_{N+1}^{(i)}))
\]
and also
\[
z_n^{(k)}=w_n^{(k)}=\pi_n^{N+1}(x_{N+1}^{(k)})=\pi_n^{N+1}(y_{N+1}^{(k)})
\]
for every $1\le n\le N$. Let $d_n$, $n\ge1$, be the metric on $X_n$. For any $0\le i\le k-1$ and $1\le n\le N$, we have
\begin{equation*}
\begin{aligned}
d_n(f(z^{(i)})_n,z_n^{(i+1)})&=d_n(f_n(z_n^{(i)}),z_n^{(i+1)})\\
&=d_n(f_n(\pi_n^{N+1}(x_{N+1}^{(i)})),\pi_n^{N+1}(x_{N+1}^{(i+1)}))\\
&=d_n(\pi_n^{N+1}(f_{N+1}(x_{N+1}^{(i)})),\pi_n^{N+1}(x_{N+1}^{(i+1)}))
\end{aligned}
\end{equation*}
with $d_{N+1}(f_{N+1}(x_{N+1}^{(i)}),x_{N+1}^{(i+1)})\le\delta$, and similarly,
\[
d_n(f(w^{(i)})_n,w_n^{(i+1)})=d_n(\pi_n^{N+1}(f_{N+1}(y_{N+1}^{(i)})),\pi_n^{N+1}(y_{N+1}^{(i+1)}))
\]
with $d_{N+1}(f_{N+1}(y_{N+1}^{(i)}),y_{N+1}^{(i+1)})\le\delta$. Therefore, for every $\epsilon>0$, if $N$ is large enough, and then $\delta$ is sufficiently small,
\[
((z^{(i)})_{i=0}^{k},(w^{(i)})_{i=0}^{k})
\]
is a pair of $\epsilon$-chains of $f$ with $(z^{(0)},w^{(0)})=(x,y)$ and $z^{(k)}=w^{(k)}$, proving that $(x,y)\in X^2$ is chain proximal for $f$. 

Given any $D_\ast=(D_n)_{n\ge1}\in\mathcal{D}_\pi$, from what is shown above, we have  $[D_\ast]\subset D$ for some $D\in\mathcal{D}(f)$. The rest of the proof is identical to that of Lemma 3.2.
\end{proof}

The final lemma gives a sufficient condition to consider the inverse sequence of subsystems without losing MLC(1).

\begin{lem}
Let $\pi=(\pi_n^{n+1}\colon(X_{n+1},f_{n+1})\to(X_n,f_n))_{n\ge1}$ be an inverse sequence of equivariant maps with MLC(1). Let $(X,f)=\lim_\pi(X_n,f_n)$ and suppose that a sequence of closed $f_n$-invariant subsets $Y_n$ of $X_n$, $n\ge1$, has the following properties:
\begin{itemize}
\item[(1)] $\pi_n^{n+1}(Y_{n+1})\subset Y_n$ for every $n\ge1$,
\item[(2)] Any $x=(x_n)_{n\ge1}\in X$ satisfies $x_n\in Y_n$ for all $n\ge1$.
\end{itemize}
For each $n\ge1$, let $g_n=(f_n)|_{Y_n}\colon Y_n\to Y_n$ and let $\tilde{\pi}_n^{n+1}=(\pi_n^{n+1})|_{Y_{n+1}}\colon Y_{n+1}\to Y_n$. Then, the inverse sequence of equivariant maps
\[
\tilde{\pi}=(\tilde{\pi}_n^{n+1}\colon(Y_{n+1},g_{n+1})\to(Y_n,g_n))_{n\ge1}
\]
satisfies MLC(1).
\end{lem}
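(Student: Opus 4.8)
The plan is to verify MLC(1) for $\tilde\pi$ through the characterization in Lemma 2.1. Write $\hat Y_n=\bigcap_{m\ge n}\tilde\pi_n^m(Y_m)$ for the sets attached to $\tilde\pi$; since $\tilde\pi_n^m=(\pi_n^m)|_{Y_m}$ we have $\tilde\pi_n^m(Y_m)=\pi_n^m(Y_m)$, so in fact $\hat Y_n=\bigcap_{m\ge n}\pi_n^m(Y_m)$. By the equivalence $(1)\Leftrightarrow(3)$ of Lemma 2.1 applied to $\tilde\pi$, it suffices to prove
\[
\hat Y_n=\pi_n^{n+1}(Y_{n+1})\qquad\text{for every }n\ge1 .
\]
The inclusion $\hat Y_n\subset\pi_n^{n+1}(Y_{n+1})$ is immediate from the definition of $\hat Y_n$, so the work is entirely in the reverse inclusion.

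First I would translate hypothesis (2) into a statement about $\hat X_n$. One has $\hat X_n=\{x_n\colon x=(x_m)_{m\ge1}\in X\}$: the inclusion $\supset$ is the already-noted fact that each coordinate of a point of $X$ lies in $\hat X_n$, while for $\subset$ one takes $y\in\hat X_n$, lifts it (by the characterization of $\hat X_n$ in Section 2.3.1) to a compatible sequence $(x_m)_{m\ge n}$ with $x_n=y$, and extends backward by $x_j=\pi_j^n(y)$ for $j<n$ to obtain a point of $X$ with $n$-th coordinate $y$. Under this identification, hypothesis (2) says exactly that $\hat X_n\subset Y_n$ for all $n$.

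The main step is then $\pi_n^{n+1}(Y_{n+1})\subset\hat Y_n$. Fix $y\in\pi_n^{n+1}(Y_{n+1})$. Since $Y_{n+1}\subset X_{n+1}$ and $\pi$ satisfies MLC(1), Lemma 2.1 gives $y\in\pi_n^{n+1}(X_{n+1})=\hat X_n$; this is the one place where MLC(1) of $\pi$ enters. Hence $y$ lifts to a point $x=(x_m)_{m\ge1}\in X$ with $x_n=y$, and by hypothesis (2) every coordinate satisfies $x_m\in Y_m$. Therefore, for each $m\ge n$ we have $x_m\in Y_m$ and $\pi_n^m(x_m)=x_n=y$, so $y\in\pi_n^m(Y_m)$; intersecting over $m\ge n$ yields $y\in\hat Y_n$. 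This establishes the desired equality, and Lemma 2.1 then gives MLC(1) for $\tilde\pi$.

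I expect the crux to be precisely this lifting step, and it is also where the content of the lemma sits. Hypothesis (2) constrains which coordinate sequences actually occur in the genuine inverse limit $X$, while MLC(1) of $\pi$ identifies $\pi_n^{n+1}(X_{n+1})$ with $\hat X_n$; together these are just enough to realize every element of $\pi_n^{n+1}(Y_{n+1})$ as a coordinate of an inverse-limit point and thereby force it into all the images $\pi_n^m(Y_m)$. Everything else is routine unwinding of the definitions of $\tilde\pi_n^m$ and $\hat Y_n$.
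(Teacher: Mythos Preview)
Your proof is correct and follows essentially the same approach as the paper: lift an element of $\pi_n^{n+1}(Y_{n+1})$ through $\hat X_n$ to a point of $X$ via MLC(1), then invoke hypothesis~(2) to place its coordinates in the $Y_m$. The only difference is cosmetic---the paper verifies the MLC(1) equality $\pi_n^{n+1}(Y_{n+1})=\pi_n^{n+2}(Y_{n+2})$ directly (using only the coordinate $x_{n+2}$ of the lift), whereas you verify the equivalent condition~(3) of Lemma~2.1 by showing $\pi_n^{n+1}(Y_{n+1})\subset\hat Y_n$ (using all coordinates $x_m$, $m\ge n$).
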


\begin{proof}
For any $n\ge1$ and $q\in X_{n+1}$, since $\pi_n^{n+1}(q)\in\pi_n^{n+1}(X_{n+1})=\hat{X}_n$ by MLC(1) of $\pi$ (see Lemma 2.1), we have $x_n=\pi_n^{n+1}(q)$ for some $x=(x_n)_{n\ge1}\in X$. Since $\pi_n^{n+1}(q)=x_n=\pi_n^{n+2}(x_{n+2})$, by the property (2), we obtain $\pi_n^{n+1}(q)\in\pi_n^{n+2}(Y_{n+2})$, implying $\pi_n^{n+1}(X_{n+1})\subset\pi_n^{n+2}(Y_{n+2})$. Then,
\[
\pi_n^{n+1}(Y_{n+1})\subset\pi_n^{n+1}(X_{n+1})\subset\pi_n^{n+2}(Y_{n+2})\subset\pi_n^{n+1}(Y_{n+1}),
\]
therefore, $\pi_n^{n+1}(Y_{n+1})=\pi_n^{n+2}(Y_{n+2})$. Since $n\ge1$ is arbitrary, $\tilde{\pi}$ satisfies MLC(1).
\end{proof}

\section{Reduction of Theorem 1.2 to Lemma 1.3}

The aim of this section is to prove the following lemma to reduce Theorem 1.2 to Lemma 1.3.

\begin{lem}
Let $f\colon X\to X$ be a continuous map with the shadowing property. If $\dim X=0$ and $h_{top}(f)>0$, then there is $C\in\mathcal{C}(f)$ such that $f|_{C}\colon C\to C$ has the shadowing property and satisfies $h_{top}(f|_C)>0$.
\end{lem}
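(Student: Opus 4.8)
The plan is to realize $(X,f)$ as an inverse limit of SFTs, isolate a single chain component that carries positive topological entropy, and arrange that the inverse sequence describing this component satisfies MLC, so that the shadowing property follows from Lemma 1.1.

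First I would reduce to the chain recurrent case. Since $f$ has the shadowing property, so does $f|_{CR(f)}$, and $\dim CR(f)=0$ because $CR(f)\subset X$; moreover every $f$-invariant Borel probability measure is supported on $CR(f)$, so the variational principle yields $h_{top}(f|_{CR(f)})=h_{top}(f)>0$. As the chain components of $f$ and of $f|_{CR(f)}$ coincide together with their restricted dynamics, it suffices to treat $f|_{CR(f)}$, so I may assume $X=CR(f)$. By Lemma 1.2 and Lemma 2.4 I then write $(X,f)$, up to topological conjugacy, as $\lim_\pi(X_n,f_n)$ with $\pi$ satisfying MLC(1) and each $(X_n,f_n)$ a SFT. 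Because $X=CR(f)$, Lemma 3.1 shows that every $x=(x_n)_{n\ge1}\in X$ satisfies $x_n\in CR(f_n)$ for all $n$, so Lemma 3.4 applied with $Y_n=CR(f_n)$ lets me replace each $X_n$ by $CR(f_n)$ without destroying MLC(1). Hence I may assume $X_n=CR(f_n)=\bigsqcup_{C\in\mathcal{C}(f_n)}C$ is a finite disjoint union of transitive SFTs, each of which has the shadowing property, and by Lemma 3.2 the chain components of $f$ are exactly the sets $[C_\ast]$ with $C_\ast=(C_n)_{n\ge1}\in\mathcal{C}_\pi$.

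Locating the entropy in a single component is the easy half. Choosing an ergodic measure $\mu$ with $h_\mu(f)>0$, its support is chain transitive and hence contained in one chain component $C=[C_\ast]$, whence $h_{top}(f|_C)\ge h_\mu(f)>0$. Moreover the sets $\hat C_n=\{x_n:x\in C\}$ satisfy $\pi_n^{n+1}(\hat C_{n+1})=\hat C_n$, so $f|_C$ is conjugate to $\lim(\hat C_n,f_n|_{\hat C_n})$ with surjective bonding maps and positive entropy. Thus a positive-entropy chain component exists for free; the real content is to \emph{choose one that also has the shadowing property}. By Lemma 1.1 applied to $(\pi_n^{n+1}|_{C_{n+1}}\colon(C_{n+1},f_{n+1}|_{C_{n+1}})\to(C_n,f_n|_{C_n}))_{n\ge1}$, and since each $f_n|_{C_n}$ is a transitive SFT with the shadowing property, it suffices that this restricted sequence satisfies MLC.

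This last point is the main obstacle. The images $\pi_n^m(C_m)$ form a decreasing sequence of subshifts of $C_n$ with intersection $\hat C_n$, and there is no automatic reason for them to stabilize at a finite stage; if they decreased strictly for all $m$, no passage to a subsequence could restore MLC, so an arbitrary positive-entropy thread need not give a component with shadowing. Here the finite-type structure is essential: for a $C_n$ possessing a \emph{unique} child in the thread, MLC(1) of $\pi$ forces $\pi_n^m(C_m)$ to be eventually constant, and it is \emph{branching}, i.e.\ a component with several children, that can spoil stabilization. The plan to overcome this is to select the thread so that the finite-level images do stabilize, using that $\mathcal{C}(f_n)$ is finite for every $n$. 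Concretely, I would first pass to a subsequence and restrict, via Lemma 3.4 with $Y_n$ the union of those chain components of $f_n$ surviving all projections $\pi_n^m$, so that the induced maps $\mathcal{C}(f_{n+1})\to\mathcal{C}(f_n)$ become surjective while MLC(1) is retained (legitimate because every coordinate $x_n$ of a point of $X$ lies in a surviving component). Then, among the positive-entropy threads, I would run a König/diagonal selection of children that keeps the essential-image entropy $h_{top}(f_n|_{\hat X_n\cap C_n})$ positive—this quantity is non-decreasing along a well-chosen thread, since $\hat X_n\cap C_n$ is a finite union of the children's images and entropy of a finite union of closed invariant sets is the maximum—and that simultaneously forces $\pi_n^m(C_m)$ to be eventually constant, which is precisely MLC for the restricted sequence. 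Applying Lemma 1.1 then gives the shadowing property of $f|_C$, and since $h_{top}(f|_C)>0$, this $C\in\mathcal{C}(f)$ is as required. I expect the delicate step to be exactly showing that stabilization of the images can be achieved \emph{simultaneously} with preserving positive entropy, i.e.\ that the branching responsible for non-stabilization can always be routed around without killing the entropy.
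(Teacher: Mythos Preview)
Your reductions are sound: restricting to $CR(f)$, realizing as an MLC(1) inverse limit of SFTs, and then passing to chain recurrent $X_n$ via Lemma 3.4 (the paper uses the more specific Lemma 4.2, but your route works once $X=CR(f)$). You also correctly isolate the real issue: a positive-entropy thread $C_\ast=(C_n)$ exists for free, but the restricted sequence $(\pi_n^{n+1}|_{C_{n+1}})$ need not satisfy MLC, and this is what blocks Lemma 1.1.

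The gap is that you do not actually produce such a thread. Your proposed mechanism---prune to ``surviving'' components and then run a K\"onig/diagonal selection keeping $h_{top}(f_n|_{\hat X_n\cap C_n})>0$---does not, as stated, force stabilization of the images $\pi_n^m(C_m)$; K\"onig-type arguments give an infinite branch, not eventual constancy of images along it, and you yourself flag the simultaneous stabilization-plus-entropy step as unresolved. The side remark that a unique child forces stabilization via MLC(1) of $\pi$ is also not justified: MLC(1) at the level of $X_n$ does not automatically descend to a single component even when that component has a unique preimage component.

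The paper's solution is a different idea. It does not try to select among existing threads. Instead it fixes one level $n$ where $h_{top}((f_n)|_{\pi_n^{n+1}(C_{n+1})})>0$ and \emph{builds} a new sequence $(C'_m)_{m\ge n}$ level by level: at each step one chooses $D_{m+1}\in\mathcal{C}(f_{m+1})$ whose image $\pi_m^{m+1}(D_{m+1})$ is \emph{maximal} (for inclusion) among those containing a prescribed point and lying in the current $C'_m$, then uses a transitive point of that image---lifted two levels up via MLC(1) of $\pi$---to pick $C'_{m+1}$. Maximality forces $\pi_m^{m+1}(D_{m+1})=\pi_m^{m+2}(D_{m+2})=\pi_m^{m+1}(C'_{m+1})$ at every stage, which is exactly MLC(1) for the restricted sequence; and property (2), $\pi_n^{n+1}(C_{n+1})\subset\pi_n^{n+1}(C'_{n+1})$, preserves the positive entropy. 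This maximal-image-plus-transitive-point construction is the missing idea in your proposal.
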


A lemma is needed for the proof. It states that for an inverse sequence of SFTs, we can consider the inverse sequence of chain recurrent sets without losing MLC(1).
 
\begin{lem}
Let $\pi=(\pi_n^{n+1}\colon(X_{n+1},f_{n+1})\to(X_n,f_n))_{n\ge1}$ be an inverse sequence of equivariant maps with MLC(1). Let $(X,f)=\lim_{\pi}(X_n,f_n)$ and suppose that $(X_n,f_n)$ is a SFT for each $n\ge1$. Let $Y_n=CR(f_n)$, $g_n=(f_n)|_{Y_n}\colon Y_n\to Y_n$, and $\tilde{\pi}_n^{n+1}=(\pi_n^{n+1})|_{Y_{n+1}}\colon Y_{n+1}\to Y_n$ for every $n\ge1$. Then, the inverse sequence of equivariant maps
\[
\tilde{\pi}=(\tilde{\pi}_n^{n+1}\colon(Y_{n+1},g_{n+1})\to(Y_n,g_n))_{n\ge1}
\]
satisfies MLC(1).
\end{lem}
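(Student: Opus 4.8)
The plan is to verify MLC(1) for $\tilde{\pi}$ straight from the definition, i.e.\ to prove
\[
\pi_n^{n+1}(CR(f_{n+1}))=\pi_n^{n+2}(CR(f_{n+2}))
\]
for every $n\ge1$. One inclusion is automatic: since $\pi_{n+1}^{n+2}(CR(f_{n+2}))\subset CR(f_{n+1})$ (an equivariant map carries chain recurrent points to chain recurrent points, as noted before Lemma 3.1), applying $\pi_n^{n+1}$ gives $\pi_n^{n+2}(CR(f_{n+2}))\subset\pi_n^{n+1}(CR(f_{n+1}))$. It is worth stressing that Lemma 3.4 cannot simply be invoked with $Y_n=CR(f_n)$: its hypothesis (2) would force $X=CR(f)$ by Lemma 3.1, which can fail for an inverse limit of SFTs (take all $(X_n,f_n)$ equal to one non-chain-recurrent SFT with identity bonding maps). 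So the reverse inclusion must be established by hand, and this is the heart of the matter.

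For the reverse inclusion the first move is to reduce to periodic points. Since $(X_{n+1},f_{n+1})$ is a SFT, $CR(f_{n+1})=\overline{Per(f_{n+1})}$, and for the continuous map $\pi_n^{n+1}$ on the compact space $X_{n+1}$ one has
\[
\pi_n^{n+1}(CR(f_{n+1}))=\pi_n^{n+1}(\overline{Per(f_{n+1})})=\overline{\pi_n^{n+1}(Per(f_{n+1}))}.
\]
As $\pi_n^{n+2}(CR(f_{n+2}))$ is closed (a continuous image of a compact set), it then suffices to prove $\pi_n^{n+1}(Per(f_{n+1}))\subset\pi_n^{n+2}(CR(f_{n+2}))$. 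So fix a periodic point $p\in X_{n+1}$ with $f_{n+1}^k(p)=p$ and put $z=\pi_n^{n+1}(p)$, so that $f_n^k(z)=z$. By MLC(1) of $\pi$ and Lemma 2.1, $z\in\pi_n^{n+1}(X_{n+1})=\hat{X}_n=\pi_n^{n+2}(X_{n+2})$, hence the fiber $F=(\pi_n^{n+2})^{-1}(z)$ is a non-empty compact subset of $X_{n+2}$. Equivariance of $\pi_n^{n+2}$ together with $f_n^k(z)=z$ gives $f_{n+2}^k(F)\subset F$. Since every continuous self-map of a non-empty compact metric space has a chain recurrent point, there is $r\in F$ chain recurrent for $f_{n+2}^k|_F$, hence for $f_{n+2}^k$, hence $r\in CR(f_{n+2}^k)\subset CR(f_{n+2})$; as $\pi_n^{n+2}(r)=z$, this yields $z\in\pi_n^{n+2}(CR(f_{n+2}))$ and completes the equality.

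The main obstacle is precisely this lifting step — producing a chain recurrent preimage two levels up. MLC(1) is exactly what guarantees the fiber over $z$ is non-empty at level $n+2$, and the periodicity of $z$ makes that fiber invariant under $f_{n+2}^k$, so a chain recurrent point of the restricted map sits over $z$; the elementary identity $CR(f_{n+2}^k)\subset CR(f_{n+2})$ (interpolate orbit segments of $f_{n+2}$ between consecutive terms of a $\delta$-cycle of $f_{n+2}^k$) then closes the argument. A variant that avoids passing to $f_{n+2}^k$ is to form the tube $T=(\pi_n^{n+2})^{-1}(\overline{\{f_n^i(z):i\ge0\}})$, which satisfies $f_{n+2}(T)\subset T$, take a chain recurrent point of $f_{n+2}|_T$, and push it forward by a suitable iterate to land exactly over $z$ (using that $CR(f_{n+2})$ is forward invariant). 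Either route gives $\pi_n^{n+1}(CR(f_{n+1}))=\pi_n^{n+2}(CR(f_{n+2}))$ for all $n$, which is MLC(1) for $\tilde{\pi}$ by definition.
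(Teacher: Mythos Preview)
Your proof is correct, and it takes a genuinely different route from the paper's at the key lifting step. Both arguments begin the same way: reduce the nontrivial inclusion to showing that each periodic $z=\pi_n^{n+1}(p)\in X_n$ lies in $\pi_n^{n+2}(CR(f_{n+2}))$, using $CR(f_{n+1})=\overline{Per(f_{n+1})}$ and MLC(1) to guarantee a preimage in $X_{n+2}$. From there the paper picks any $q\in X_{n+2}$ over $z$, invokes the SFT property that some $r\in CR(f_{n+2})$ is forward asymptotic to $q$, pushes down to see $\pi_n^{n+2}(r)$ is asymptotic to the periodic point $z$, and concludes $z\in\overline{\pi_n^{n+2}(Y_{n+2})}=\pi_n^{n+2}(Y_{n+2})$ by invariance and closedness. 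You instead note that the whole fiber $F=(\pi_n^{n+2})^{-1}(z)$ is $f_{n+2}^k$-invariant, so the general fact that a continuous self-map of a non-empty compact metric space has a chain recurrent point yields $r\in F\cap CR(f_{n+2}^k)\subset CR(f_{n+2})$ with $\pi_n^{n+2}(r)=z$ \emph{exactly}. Your argument is a bit more elementary: it uses only the density of periodic points in $CR(f_{n+1})$ and avoids the second SFT property (asymptoticity to the chain recurrent set) that the paper relies on; it also hits $z$ on the nose rather than via a closure argument. The paper's version, on the other hand, avoids passing to iterates and the inclusion $CR(f_{n+2}^k)\subset CR(f_{n+2})$. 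Both are clean; yours would in fact go through for any inverse sequence satisfying MLC(1) in which periodic points are dense in each $CR(f_n)$.
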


\begin{proof}
Let $n\ge1$. Since $Y_{n+1}=\overline{Per(f_{n+1})}$ and $\pi_n^{n+1}(Per(f_{n+1}))\subset Per(f_n)$,
\[
\pi_n^{n+1}(Y_{n+1})=\pi_n^{n+1}(\overline{Per(f_{n+1})})\subset\overline{\pi_n^{n+1}(Per(f_{n+1}))}\subset\overline{\pi_n^{n+1}(Y_{n+1})\cap Per(f_n)}.
\]
By MLC(1) of $\pi$, $\pi_n^{n+1}(Y_{n+1})\subset\pi_n^{n+1}(X_{n+1})=\pi_n^{n+2}(X_{n+2})$; therefore, for any $p\in\pi_n^{n+1}(Y_{n+1})\cap Per(f_n)$, there is $q\in X_{n+2}$ such that $p=\pi_n^{n+2}(q)$. Then, there is $r\in Y_{n+2}$ such that
\[
\lim_{k\to\infty}d_{n+2}(f_{n+2}^k(q),f_{n+2}^k(r))=0,
\]
implying 
\begin{equation*}
\begin{aligned}
\lim_{k\to\infty}d_n(f_n^k(p),f_n^k(\pi_{n}^{n+2}(r)))&=\lim_{k\to\infty}d_n(f_n^k(\pi_{n}^{n+2}(q)),f_n^k(\pi_{n}^{n+2}(r)))\\
&=\lim_{k\to\infty}d_n(\pi_{n}^{n+2}(f_{n+2}^k(q)),\pi_{n}^{n+2}(f_{n+2}^k(r)))\\
&=0,
\end{aligned}
\end{equation*}
where $d_n$, $d_{n+2}$ are the metrics on $X_n$, $X_{n+2}$. Note that $\pi_{n}^{n+2}(r)\in\pi_n^{n+2}(Y_{n+2})$. From $p\in Per(f_n)$ and the $f_n$-invariance of $\pi_n^{n+2}(Y_{n+2})$, it follows that $p\in\overline{\pi_n^{n+2}(Y_{n+2})}=\pi_n^{n+2}(Y_{n+2})$. Since $p\in\pi_n^{n+1}(Y_{n+1})\cap Per(f_n)$ is arbitrary, we obtain
\[
\pi_n^{n+1}(Y_{n+1})\cap Per(f_n)\subset\pi_n^{n+2}(Y_{n+2})
\]
and so
\[
\pi_n^{n+1}(Y_{n+1})\subset\overline{\pi_n^{n+1}(Y_{n+1})\cap Per(f_n)}\subset\overline{\pi_n^{n+2}(Y_{n+2})}=\pi_n^{n+2}(Y_{n+2})
\]
Thus, $\pi_n^{n+1}(Y_{n+1})=\pi_n^{n+2}(Y_{n+2})$, proving the lemma.
\end{proof}

Then, we prove Lemma 4.1. The proof is based on Lemma 1.2 and by carefully choosing an inverse sequence of chain components with MLC(1).

\begin{proof}[Proof of Lemma 4.1]
By Lemma 1.2 and Lemma 2.4, we may assume $(X,f)=\lim_{\pi}(X_n,f_n)$, where $(X_n,f_n)$, $n\ge1$, are SFTs, and
\[
\pi=(\pi_n^{n+1}\colon(X_{n+1},f_{n+1})\to(X_n,f_n))_{n\ge1}
\]
is an inverse sequence of equivariant maps with MLC(1). Let $Y_n=CR(f_n)$, $g_n=(f_n)|_{Y_n}\colon Y_n\to Y_n$, and $\tilde{\pi}_n^{n+1}=(\pi_n^{n+1})|_{Y_{n+1}}\colon Y_{n+1}\to Y_n$ for every $n\ge1$. Then, $(Y_n,g_n)$, $n\ge1$, are chain recurrent SFTs, and by Lemma 4.2,
\[
\tilde{\pi}=(\tilde{\pi}_n^{n+1}\colon(Y_{n+1},g_{n+1})\to(Y_n,g_n))_{n\ge1}
\]
satisfies MLC(1). Letting $(Y,g)=\lim_{\tilde{\pi}}(Y_n,g_n)$ and
\[
R=\{x=(x_n)_{n\ge1}\in X\colon x_n\in CR(f_n),\forall n\ge 1\},
\]
we have $R=CR(f)$ by Lemma 3.1, and as in the proof of Lemma 3.1, the inclusion $i\colon Y\to CR(f)$ is a topological conjugacy $i\colon(Y,g)\to(CR(f),f|_{CR(f)})$. By this, again without loss of generality, we may assume that $f$ and $f_n$, $n\ge1$, are chain recurrent.

Since $h_{top}(f)>0$, there is $C^\dagger\in\mathcal{C}(f)$ such that $h_{top}(f|_{C^\dagger})>0$. A proof of this fact is as follows: by the variational principle, there is an ergodic $f$-invariant Borel probability measure $\mu$ on $X$ such that the measure theoretical entropy $h_\mu(f)$ is positive. Since  $f|_{{\rm supp}(\mu)}\colon{\rm supp}(\mu)\to{\rm supp}(\mu)$, the restriction of $f$ to the support of $\mu$, is transitive, there is $C^\dagger\in\mathcal{C}(f)$ such that ${\rm supp}(\mu)\subset C^\dagger$. By the variational principle again, we obtain
\[
h_{top}(f|_{C^\dagger})\ge h_{top}(f|_{{\rm supp}(\mu)})\ge h_\mu(f|_{{\rm supp}(\mu)})=h_\mu(f)>0.
\]
Then, by Lemma 3.2, there is $C_\ast=(C_n)_{n\ge1}\in\mathcal{C}_\pi$ such that $C^\dagger=[C_\ast]$. Letting $\Gamma=\prod_{n\ge1}\pi_n^{n+1}(C_{n+1})$, a closed $f$-invariant subset of $X$, since $C^\dagger\subset\Gamma$, we have
\[
0<h_{top}(f|_{C^\dagger})\le h_{top}(f|_\Gamma)=h_{top}(\prod_{n\ge1}(f_n)|_{\pi_n^{n+1}(C_{n+1})})=\sum_{n\ge1}h_{top}((f_n)|_{\pi_n^{n+1}(C_{n+1})}),
\]
implying $h_{top}((f_n)|_{\pi_n^{n+1}(C_{n+1})})>0$ for some $n\ge1$.

Note that $\mathcal{C}(f_m)$, $m\ge1$, are finite sets, and for any $m\ge1$ and $D\in\mathcal{C}(f_m)$, $(f_m)|_{D}\colon D\to D$ is transitive (see Section 2.4). Let us prove the following claim. 
\begin{claim}
\normalfont
There is $C'_\ast=(C'_m)_{m\ge n}\in\prod_{m\ge n}\mathcal{C}(f_m)$ with the following properties:
\begin{itemize}
\item[(1)] $C'_n=C_n$,
\item[(2)] $\pi_n^{n+1}(C_{n+1})\subset\pi_n^{n+1}(C'_{n+1})$,
\item[(3)] $\pi_m^{m+1}(C'_{m+1})\subset C'_m$ for every $m\ge n$,
\item[(4)] $\pi_m^{m+1}(C'_{m+1})=\pi_m^{m+2}(C'_{m+2})$ for all $m\ge n$.
\end{itemize}
\end{claim}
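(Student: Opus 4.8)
The plan is to read condition (4) as the assertion that the inverse sequence of chain components $((\pi_m^{m+1})|_{C'_{m+1}}\colon C'_{m+1}\to C'_m)_{m\ge n}$ satisfies MLC(1). By Lemma 2.1 applied to \emph{this} sequence, that is equivalent to $\pi_m^{m+1}(C'_{m+1})=\pi_m^j(C'_j)$ for every $m\ge n$ and every $j\ge m+1$; so the task is to produce a single thread, defined at every level $m\ge n$, along which the projected images stabilize \emph{immediately} rather than merely eventually. First I would record that for any thread obeying (3) the images are decreasing, since $\pi_m^{j+1}(C'_{j+1})=\pi_m^j(\pi_j^{j+1}(C'_{j+1}))\subset\pi_m^j(C'_j)$; thus the whole difficulty is to prevent any later shrinking.

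The structural input I would exploit is MLC(1) of the ambient sequence $\pi$, which by Lemma 2.1 gives $\pi_m^j(X_j)=\pi_m^{m+1}(X_{m+1})=\hat{X}_m$ for all $j>m$. Since we have reduced to the chain recurrent case, each $X_j$ is the disjoint union of its clopen chain components (Section 2.4), so $\bigcup\{D\in\mathcal{C}(f_j)\colon\pi_m^j(D)\subset E\}=(\pi_m^j)^{-1}(E)$ for any $E\in\mathcal{C}(f_m)$, and hence
\[
\bigcup\{\pi_m^j(D)\colon D\in\mathcal{C}(f_j),\ \pi_m^j(D)\subset E\}=\pi_m^j(X_j)\cap E=\hat{X}_m\cap E
\]
is independent of $j>m$. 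So, while the image of a single chain component may shrink, the union of the images of all chain components over $E$ is a fixed set at every depth. This lets me recast the construction in terms of images: I seek chain components $C'_m$ together with $A_m:=\pi_m^{m+1}(C'_{m+1})\subset C'_m$ satisfying the compatibility $\pi_m^{m+1}(A_{m+1})=A_m$. Such a sequence automatically yields (3) and (4), because $\pi_m^{m+2}(C'_{m+2})=\pi_m^{m+1}(\pi_{m+1}^{m+2}(C'_{m+2}))=\pi_m^{m+1}(A_{m+1})=A_m=\pi_m^{m+1}(C'_{m+1})$; equivalently, the images $A_m$ must form an inverse sequence with \emph{surjective} bonding maps $A_{m+1}\to A_m$.

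To produce such a thread I would work inside the finite sets $\mathcal{C}(f_m)$ and the finitely many candidate images over each chain component, isolating those whose image is \emph{self-reproducing}: declare $E\in\mathcal{C}(f_m)$ admissible if there are $D\in\mathcal{C}(f_{m+1})$ over $E$ and $D'\in\mathcal{C}(f_{m+2})$ over $D$ with $\pi_m^{m+1}(D)=\pi_m^{m+2}(D')$ and with $D$ itself admissible. This is a coinductive (greatest-fixed-point) condition, and using the union identity above I would show that admissible continuations are always available, so that the admissible nodes support an infinite branch, extracted by compactness of $\prod_{m\ge n}\mathcal{C}(f_m)$. A branch issuing from $C_n$ then gives $(C'_m)_{m\ge n}$ with (1), (3) and (4); for (2) I would use the inclusion as deliberate slack, choosing the first admissible step $C'_{n+1}$ over $C_n$ so that $\pi_n^{n+1}(C'_{n+1})\supset\pi_n^{n+1}(C_{n+1})$, which is what later yields $h_{top}(f_n|_{\pi_n^{n+1}(C'_{n+1})})\ge h_{top}(f_n|_{\pi_n^{n+1}(C_{n+1})})>0$.

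The main obstacle is precisely the passage from \emph{eventual} stabilization of images (MLC), which the finiteness of $\mathcal{C}(f_m)$ grants cheaply, to \emph{immediate} stabilization at every level along one thread (MLC(1), i.e.\ (4)). A decreasing sequence of subshifts of $X_m$ need not stabilize at any finite depth — its length-$\ell$ word sets stabilize only at depths growing with $\ell$ — so a naive or greedy maximal-image choice at each step need not be coherent two steps later. The self-reproducing-image (admissibility) selection is designed to overcome exactly this, and its viability rests entirely on the union identity coming from MLC(1) of $\pi$. Reconciling this selection with the base constraint (2) — securing a \emph{first} admissible step over $C_n$ whose image still dominates $\pi_n^{n+1}(C_{n+1})$, and hence retains positive entropy — is the remaining delicate point.
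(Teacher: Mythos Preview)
Your plan correctly identifies the target (MLC(1) for the restricted sequence) and the key structural input (the union identity coming from MLC(1) of $\pi$), but it has a genuine gap precisely where you flag it: you assert that ``using the union identity above I would show that admissible continuations are always available,'' yet that identity constrains only the \emph{union} of images over $E$, not any single one, and you offer no mechanism for extracting a self-reproducing thread from it. Your coinductive predicate could a priori have empty greatest fixed point, and nothing in the proposal rules that out. You also leave (2) explicitly unresolved. More tellingly, you dismiss the ``greedy maximal-image choice'' as possibly incoherent two steps later---but that is exactly the device the paper makes work.

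The paper's construction runs as follows. Having fixed $C'_m$, choose $D_{m+1}\in\mathcal C(f_{m+1})$ with $\pi_m^{m+1}(D_{m+1})\subset C'_m$ \emph{maximal} among such images (at the first step, subject also to $\pi_n^{n+1}(C_{n+1})\subset\pi_n^{n+1}(D_{n+1})$, which secures (2) for free). Now take a \emph{transitive point} $p$ of $(f_m)|_{\pi_m^{m+1}(D_{m+1})}$, lift it via MLC(1) of $\pi$ to some $q\in X_{m+2}$, set $C'_{m+1}$ to be the chain component of $f_{m+1}$ containing $\pi_{m+1}^{m+2}(q)$, and choose $D_{m+2}$ over $C'_{m+1}$ with maximal image through $\pi_{m+1}^{m+2}(q)$. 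Any closed $f_m$-invariant set containing $p$ contains $\omega(p,f_m)=\pi_m^{m+1}(D_{m+1})$; applying this to $\pi_m^{m+2}(D_{m+2})$ and to $\pi_m^{m+1}(C'_{m+1})$ produces a chain of inclusions inside $C'_m$, and since $C'_{m+1}$ is itself a competitor in the level-$(m+1)$ maximality, the chain collapses to $\pi_m^{m+1}(D_{m+1})=\pi_m^{m+2}(D_{m+2})=\pi_m^{m+1}(C'_{m+1})$. Combining two consecutive such equalities gives (4), and (3) falls out along the way. The transitive-point lifting is precisely the missing ingredient that converts your union identity into control of a \emph{single} image, and it is what makes the greedy maximal choice coherent rather than merely heuristic.
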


\begin{proof}[proof of the claim]

{\bf Step $1\colon$} Let $C'_n=C_n$. Take $D_{n+1}\in\mathcal{C}(f_{n+1})$ such that 
\[
\pi_n^{n+1}(C_{n+1})\subset\pi_n^{n+1}(D_{n+1})\subset C'_n, \tag{P1}
\]
and $\pi_n^{n+1}(D_{n+1})$ is maximal among
\[
\{\pi_n^{n+1}(E_{n+1})\colon E_{n+1}\in\mathcal{C}(f_{n+1}),\pi_n^{n+1}(C_{n+1})\subset\pi_n^{n+1}(E_{n+1})\subset C'_n\}
\]
with respect to the inclusion relation.

{\bf Step $2\colon$} Note that $(f_n)|_{\pi_n^{n+1}(D_{n+1})}$ is transitive, and take a transitive point $p_1\in\pi_n^{n+1}(D_{n+1})$, i.e., $\pi_n^{n+1}(D_{n+1})=\omega(p_1,f_n)$, the $\omega$-limit set. Since
\[
p_1\in\pi_n^{n+1}(D_{n+1})\subset\pi_n^{n+1}(X_{n+1})=\pi_n^{n+2}(X_{n+2}),
\]
we have $p_1=\pi_n^{n+2}(q_1)$ for some $q_1\in X_{n+2}$. Take $C'_{n+1}\in\mathcal{C}(f_{n+1})$ with $\pi_{n+1}^{n+2}(q_1)\in C'_{n+1}$. Then, choose $D_{n+2}\in\mathcal{C}(f_{n+2})$ such that
\[
\pi_{n+1}^{n+2}(q_1)\in\pi_{n+1}^{n+2}(D_{n+2})\subset C'_{n+1}, \tag{P2}
\]
and $\pi_{n+1}^{n+2}(D_{n+2})$ is maximal among
\[
\{\pi_{n+1}^{n+2}(E_{n+2})\colon E_{n+2}\in\mathcal{C}(f_{n+2}),\pi_{n+1}^{n+2}(q_1)\in\pi_{n+1}^{n+2}(E_{n+2})\subset C'_{n+1}\}
\]
with respect to the inclusion relation. By (P2), we have
\[
p_1\in\pi_n^{n+2}(D_{n+2})\subset\pi_n^{n+1}(C'_{n+1}),
\]
implying
\[
\pi_n^{n+1}(D_{n+1})\subset\pi_n^{n+2}(D_{n+2})\subset\pi_n^{n+1}(C'_{n+1})
\]
since $\pi_n^{n+1}(D_{n+1})=\omega(p_1,f_n)$, and $\pi_n^{n+2}(D_{n+2})$ is $f_n$-invariant. By (P1) and $p_1\in\pi_n^{n+1}(D_{n+1})$, we see that $\pi_n^{n+1}(C_{n+1})\subset\pi_n^{n+1}(D_{n+1})$ and $p_1\in C'_n\cap\pi_n^{n+1}(C'_{n+1})$; therefore,
\[
\pi_n^{n+1}(C_{n+1})\subset\pi_n^{n+1}(D_{n+1})\subset\pi_n^{n+2}(D_{n+2})\subset\pi_n^{n+1}(C'_{n+1})\subset C'_n.
\]
By the maximality of $\pi_n^{n+1}(D_{n+1})$ in Step 1, we obtain
\[
\pi_n^{n+1}(D_{n+1})=\pi_n^{n+2}(D_{n+2})=\pi_n^{n+1}(C'_{n+1}). \tag{Q1}
\]

{\bf Step $3\colon$} Note that $(f_{n+1})|_{\pi_{n+1}^{n+2}(D_{n+2})}$ is transitive, and take a transitive point $p_2\in\pi_{n+1}^{n+2}(D_{n+2})$, i.e., $\pi_{n+1}^{n+2}(D_{n+2})=\omega(p_2,f_{n+1})$. Since
\[
p_2\in\pi_{n+1}^{n+2}(D_{n+2})\subset\pi_{n+1}^{n+2}(X_{n+2})=\pi_{n+1}^{n+3}(X_{n+3}),
\]
we have $p_2=\pi_{n+1}^{n+3}(q_2)$ for some $q_2\in X_{n+3}$. Take $C'_{n+2}\in\mathcal{C}(f_{n+2})$ with $\pi_{n+2}^{n+3}(q_2)\in C'_{n+2}$. Then, choose $D_{n+3}\in\mathcal{C}(f_{n+3})$ such that
\[
\pi_{n+2}^{n+3}(q_2)\in\pi_{n+2}^{n+3}(D_{n+3})\subset C'_{n+2}, \tag{P3}
\]
and $\pi_{n+2}^{n+3}(D_{n+3})$ is maximal among
\[
\{\pi_{n+2}^{n+3}(E_{n+3})\colon E_{n+3}\in\mathcal{C}(f_{n+3}),\pi_{n+2}^{n+3}(q_2)\in\pi_{n+2}^{n+3}(E_{n+3})\subset C'_{n+2}\}
\]
with respect to the inclusion relation. By (P3), we have
\[
p_2\in\pi_{n+1}^{n+3}(D_{n+3})\subset\pi_{n+1}^{n+2}(C'_{n+2}),
\]
implying
\[
\pi_{n+1}^{n+2}(D_{n+2})\subset\pi_{n+1}^{n+3}(D_{n+3})\subset\pi_{n+1}^{n+2}(C'_{n+2})
\]
since $\pi_{n+1}^{n+2}(D_{n+2})=\omega(p_2,f_{n+1})$, and $\pi_{n+1}^{n+3}(D_{n+3})$ is $f_{n+1}$-invariant. By (P2) and $p_2\in\pi_{n+1}^{n+2}(D_{n+2})$, we see that $\pi_{n+1}^{n+2}(q_1)\in\pi_{n+1}^{n+2}(D_{n+2})$ and $p_2\in C'_{n+1}\cap\pi_{n+1}^{n+2}(C'_{n+2})$; therefore,
\[
\pi_{n+1}^{n+2}(q_1)\in\pi_{n+1}^{n+2}(D_{n+2})\subset\pi_{n+1}^{n+3}(D_{n+3})\subset\pi_{n+1}^{n+2}(C'_{n+2})\subset C'_{n+1}.
\]
By the maximality of $\pi_{n+1}^{n+2}(D_{n+2})$ in Step 2, we obtain
\[
\pi_{n+1}^{n+2}(D_{n+2})=\pi_{n+1}^{n+3}(D_{n+3})=\pi_{n+1}^{n+2}(C'_{n+2}). \tag{Q2}
\]
(Q1) and (Q2) yield $\pi_n^{n+1}(C'_{n+1})=\pi_n^{n+2}(C'_{n+2})$.

{\bf Step $4\colon$} Note that $(f_{n+2})|_{\pi_{n+2}^{n+3}(D_{n+3})}$ is transitive, and take a transitive point $p_3\in\pi_{n+2}^{n+3}(D_{n+3})$, i.e., $\pi_{n+2}^{n+3}(D_{n+3})=\omega(p_3,f_{n+2})$. Since
\[
p_3\in\pi_{n+2}^{n+3}(D_{n+3})\subset\pi_{n+2}^{n+3}(X_{n+3})=\pi_{n+2}^{n+4}(X_{n+4}),
\]
we have $p_3=\pi_{n+2}^{n+4}(q_3)$ for some $q_3\in X_{n+4}$. Take $C'_{n+3}\in\mathcal{C}(f_{n+3})$ with $\pi_{n+3}^{n+4}(q_3)\in C'_{n+3}$. Then, choose $D_{n+4}\in\mathcal{C}(f_{n+4})$ such that
\[
\pi_{n+3}^{n+4}(q_3)\in\pi_{n+3}^{n+4}(D_{n+4})\subset C'_{n+3}, \tag{P4}
\]
and $\pi_{n+3}^{n+4}(D_{n+4})$ is maximal among
\[
\{\pi_{n+3}^{n+4}(E_{n+4})\colon E_{n+4}\in\mathcal{C}(f_{n+4}),\pi_{n+3}^{n+4}(q_3)\in\pi_{n+3}^{n+4}(E_{n+4})\subset C'_{n+3}\}
\]
with respect to the inclusion relation. By (P4), we have
\[
p_3\in\pi_{n+2}^{n+4}(D_{n+4})\subset\pi_{n+2}^{n+3}(C'_{n+3}),
\]
implying
\[
\pi_{n+2}^{n+3}(D_{n+3})\subset\pi_{n+2}^{n+4}(D_{n+4})\subset\pi_{n+2}^{n+3}(C'_{n+3})
\]
since $\pi_{n+2}^{n+3}(D_{n+3})=\omega(p_3,f_{n+2})$, and $\pi_{n+2}^{n+4}(D_{n+4})$ is $f_{n+2}$-invariant. By (P3) and $p_3\in\pi_{n+2}^{n+3}(D_{n+3})$, we see that $\pi_{n+2}^{n+3}(q_2)\in\pi_{n+2}^{n+3}(D_{n+3})$ and $p_3\in C'_{n+2}\cap\pi_{n+2}^{n+3}(C'_{n+3})$; therefore,
\[
\pi_{n+2}^{n+3}(q_2)\in\pi_{n+2}^{n+3}(D_{n+3})\subset\pi_{n+2}^{n+4}(D_{n+4})\subset\pi_{n+2}^{n+3}(C'_{n+3})\subset C'_{n+2}.
\]
By the maximality of $\pi_{n+2}^{n+3}(D_{n+3})$ in Step 3, we obtain
\[
\pi_{n+2}^{n+3}(D_{n+3})=\pi_{n+2}^{n+4}(D_{n+4})=\pi_{n+2}^{n+3}(C'_{n+3}). \tag{Q3}
\]
(Q2) and (Q3) yield $\pi_{n+1}^{n+2}(C'_{n+2})=\pi_{n+1}^{n+3}(C'_{n+3})$.

Continuing inductively, we obtain a sequence $C'_\ast=(C'_m)_{m\ge n}\in\prod_{m\ge n}\mathcal{C}(f_m)$. Then, the properties (1) and (2) are ensured in Steps $1$ and $2$. For any $k\ge0$, $\pi_{n+k}^{n+k+1}(C'_{n+k+1})\subset C'_{n+k}$ and $\pi_{n+k}^{n+k+1}(C'_{n+k+1})=\pi_{n+k}^{n+k+2}(C'_{n+k+2})$ are established in Steps $k+2$ and $k+3$, respectively. Thus, $C'_\ast$ satisfies the required properties, and so the claim has been proved.
\end{proof}

We continue the proof. Define $C''_\ast=(C''_j)_{j\ge1}\in\prod_{j\ge1}\mathcal{C}(f_j)$ by
\begin{equation*}
C''_j=
\begin{cases}
C_j&\text{if $1\le j<n$}\\
C'_j&\text{if $n\le j$}
\end{cases}
.
\end{equation*}
By the properties (1) and (3) in the claim, we see $C''_\ast\in\mathcal{C}_\pi$. By Lemma 3.2, letting $C=[C''_\ast]$, we obtain $C\in\mathcal{C}(f)$. Let
\[
\pi''=((\pi_j^{j+1})|_{C''_{j+1}}\colon(C''_{j+1},(f_{j+1})|_{C''_{j+1}})\to(C''_j,(f_j)|_{C''_j}))_{j\ge1}
\]
and
\[
\pi'=((\pi_m^{m+1})|_{C'_{m+1}}\colon(C'_{m+1},(f_{m+1})|_{C'_{m+1}})\to(C'_m,(f_m)|_{C'_m}))_{m\ge n}.
\]
Then, $(C,f|_C)$ (resp.\:$\lim_{\pi''}(C''_j,(f_j)|_{C''_j})$) is topologically conjugate to
\[
\lim_{\pi''}(C''_j,(f_j)|_{C''_j})
\]
(resp.\:$\lim_{\pi'}(C'_m,(f_m)|_{C'_m})$), so $(C,f|_C)$ is topologically conjugate to
\[
\lim_{\pi'}(C'_m,(f_m)|_{C'_m}).
\]

Let $(Y,g)=\lim_{\pi'}(C'_m,(f_m)|_{C'_m})$. By the property (4) in the claim, $\pi'$ satisfies MLC(1). Since $(f_m)|_{C'_m}$ has the shadowing property for each $m\ge1$, due to Lemma 1.1, $g$ has the shadowing property. Let us prove $h_{top}(g)>0$. Again by MLC(1) of $\pi'$, we have $\hat{C}'_n=\pi_n^{n+1}(C'_{n+1})$ (see Lemma 2.1). Then, a map $\phi\colon Y\to\pi_n^{n+1}(C'_{n+1})$ defined by $\phi(y)=y_n$ for all $y=(y_m)_{m\ge n}\in Y$, gives a factor map
\[
\phi\colon(Y,g)\to(\pi_n^{n+1}(C'_{n+1}),(f_n)|_{\pi_n^{n+1}(C'_{n+1})}).
\]
The property (2) in the claim ensures that $\pi_n^{n+1}(C_{n+1})$ is a closed $f_n$-invariant subset of $\pi_n^{n+1}(C'_{n+1})$, therefore,
\[
h_{top}(g)\ge h_{top}((f_n)|_{\pi_n^{n+1}(C'_{n+1})})\ge h_{top}((f_n)|_{\pi_n^{n+1}(C_{n+1})})>0.
\]
Thus, $f|_C$ has the shadowing property and satisfies $h_{top}(f|_C)>0$, completing the proof of the lemma.
\end{proof}

\section{Proof of Lemma 1.3}

In this section, we prove Lemma 1.3. Let
\[
\pi=(\pi_n^{n+1}\colon(X_{n+1},f_{n+1})\to(X_n,f_n))_{n\ge1}
\]
be an inverse sequence of equivariant maps with MLC(1) and suppose that $(X_n,f_n)$ is a transitive SFT for each $n\ge1$. For every $n\ge1$, note that $\mathcal{D}(f_n)$ is a finite set, and let $m_n=|\mathcal{D}(f_n)|$. Then, $m_n|m_{n+1}$ for all $n\ge1$, and for any $n\ge1$ and $E\in\mathcal{D}(f_n)$, $(f_n^{m_n})|_E\colon E\to E$ is mixing. The proof of the first lemma is similar to that of Lemma 4.1.

\begin{lem}
There is $D_\ast=(D_n)_{n\ge1}\in\mathcal{D}_\pi$ such that
\[
\tilde{\pi}=((\pi_n^{n+1})|_{D_{n+1}}\colon D_{n+1}\to D_n)_{n\ge1}
\]
satisfies MLC(1).
\end{lem}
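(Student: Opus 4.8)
The plan is to reproduce the maximality-based construction from the proof of Lemma 4.1, but carried out on the cyclic pieces in $\mathcal{D}(f_n)$ rather than on the chain components in $\mathcal{C}(f_n)$. First I would record the combinatorial data. For each $n$, $\mathcal{D}(f_n)$ consists of the $m_n$ pieces $E,f_n(E),\dots,f_n^{m_n-1}(E)$, each $f_n^{m_n}$-invariant with $(f_n^{m_n})|_E$ mixing, and $f_n$ permutes them as a single $m_n$-cycle. Since $\pi_n^{n+1}$ is equivariant, it induces a map $\mathcal{D}(f_{n+1})\to\mathcal{D}(f_n)$ intertwining the two cyclic shifts; as the shift on $\mathcal{D}(f_n)$ is a single cycle, the image of this induced map is shift-invariant and non-empty, hence all of $\mathcal{D}(f_n)$, so the induced map is onto. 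For $E\in\mathcal{D}(f_{n+1})$ the restriction $\pi_n^{n+1}\colon(E,f_{n+1}^{m_{n+1}})\to(\pi_n^{n+1}(E),f_n^{m_{n+1}})$ is a factor map between systems on $f^{m_{n+1}}$-invariant sets, so $(f_n^{m_{n+1}})|_{\pi_n^{n+1}(E)}$ is mixing; moreover, since $m_{n+1}\mid m_{n+k}$, the set $\pi_n^{n+1}(E)$ is $f_n^{m_{n+k}}$-invariant and $(f_n^{m_{n+k}})|_{\pi_n^{n+1}(E)}$ is mixing for every $k\ge1$.

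The one genuinely new point, and the main obstacle, is that a single piece image $\pi_n^{n+k}(D_{n+k})$ is invariant only under $f_n^{m_{n+k}}$, and the exponent $m_{n+k}$ grows with $k$. This is unlike Lemma 4.1, where every chain component image was $f_n$-invariant, so that one $\omega$-limit set with respect to $f_n$ could be squeezed into the next image. To get around the shifting exponents I would, at each level $n$, choose the transitive point inside the maximal image $\pi_n^{n+1}(\Delta_{n+1})$ to be transitive for $(f_n^{m_{n+k}})|_{\pi_n^{n+1}(\Delta_{n+1})}$ simultaneously for all $k\ge1$. Each such transitivity is a residual condition (each of these restrictions is mixing, hence transitive, hence has a residual set of transitive points), so a countable intersection still yields such a point $p$, and then $\omega(p,f_n^{m_{n+k}})=\pi_n^{n+1}(\Delta_{n+1})$ for every $k$. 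This lets me invoke whichever invariance is needed: once $p\in\pi_n^{n+k}(\Delta_{n+k})$, the latter set is closed and $f_n^{m_{n+k}}$-invariant, so $\pi_n^{n+1}(\Delta_{n+1})=\omega(p,f_n^{m_{n+k}})\subset\pi_n^{n+k}(\Delta_{n+k})$.

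With this in hand the construction follows Lemma 4.1 closely. Starting from an arbitrary $D'_1\in\mathcal{D}(f_1)$, I would alternately pick a maximal auxiliary piece $\Delta_{n+1}$, with $\pi_n^{n+1}(\Delta_{n+1})$ maximal among $\{\pi_n^{n+1}(E)\colon E\in\mathcal{D}(f_{n+1}),\ \pi_n^{n+1}(E)\subset D'_n\}$ (non-empty by the surjectivity above), then take a transitive point $p\in\pi_n^{n+1}(\Delta_{n+1})$ as in the previous paragraph, lift it through one more level using MLC(1) of $\pi$ (so $p\in\pi_n^{n+1}(X_{n+1})=\pi_n^{n+2}(X_{n+2})$, giving $p=\pi_n^{n+2}(q)$ by Lemma 2.1), let $D'_{n+1}$ be the piece containing $\pi_{n+1}^{n+2}(q)$, and let $\Delta_{n+2}$ be the next maximal auxiliary piece with $\pi_{n+1}^{n+2}(q)\in\pi_{n+1}^{n+2}(\Delta_{n+2})\subset D'_{n+1}$. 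Since $p=\pi_n^{n+1}(\pi_{n+1}^{n+2}(q))\in\pi_n^{n+1}(D'_{n+1})$ and $p\in D'_n$, disjointness of the pieces forces $\pi_n^{n+1}(D'_{n+1})\subset D'_n$, so $\pi_n^{n+1}(D'_{n+1})$ is itself a candidate; then the squeeze $\pi_n^{n+1}(\Delta_{n+1})\subset\pi_n^{n+2}(\Delta_{n+2})\subset\pi_n^{n+1}(D'_{n+1})$ together with maximality of $\pi_n^{n+1}(\Delta_{n+1})$ forces $\pi_n^{n+1}(\Delta_{n+1})=\pi_n^{n+2}(\Delta_{n+2})=\pi_n^{n+1}(D'_{n+1})$. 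This simultaneously shows $D_\ast=(D'_n)_{n\ge1}\in\mathcal{D}_\pi$.

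Finally I would extract MLC(1) exactly as the equations $(\mathrm{Q}1),(\mathrm{Q}2),\dots$ are combined in Lemma 4.1: applying $\pi_n^{n+1}$ to the level-$(n+1)$ equality $\pi_{n+1}^{n+2}(\Delta_{n+2})=\pi_{n+1}^{n+2}(D'_{n+2})$ gives $\pi_n^{n+2}(\Delta_{n+2})=\pi_n^{n+2}(D'_{n+2})$, which combined with $\pi_n^{n+2}(\Delta_{n+2})=\pi_n^{n+1}(D'_{n+1})$ yields $\pi_n^{n+1}(D'_{n+1})=\pi_n^{n+2}(D'_{n+2})$, i.e. MLC(1) for $\tilde{\pi}$ at level $n$; letting $n$ range over all positive integers completes the proof. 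The only step requiring care beyond Lemma 4.1 is the exponent bookkeeping of the second paragraph, which is exactly where the passage from $\mathcal{C}(f_n)$ to the cyclically structured $\mathcal{D}(f_n)$ bites; everything else is a direct transcription.
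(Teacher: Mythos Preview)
Your proposal is correct and reproduces the paper's own maximality-and-squeeze construction essentially verbatim. The one place where you work harder than necessary is the exponent bookkeeping you flag as the ``main obstacle'': the paper simply observes that since $(f_n^{m_{n+1}})|_{\pi_n^{n+1}(E_{n+1})}$ is mixing, the single power $(f_n^{m_{n+2}})|_{\pi_n^{n+1}(E_{n+1})}$ is transitive, and takes $p_n$ transitive for that one exponent $m_{n+2}$---this suffices because the only set entering the squeeze at level $n$ is $\pi_n^{n+2}(E_{n+2})$, which is $f_n^{m_{n+2}}$-invariant, so your Baire-category intersection over all $k\ge1$ is not needed.
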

 
\begin{proof}

We argue as in the proof of Lemma 4.1.

{\bf Step $1\colon$} Fix $D_1\in\mathcal{D}(f_1)$ with $\pi_1^2(F_2)\subset D_1$ for some $F_2\in\mathcal{D}(f_2)$. Take $E_2\in\mathcal{D}(f_2)$ such that 
\[
\pi_1^2(E_2)\subset D_1, \tag{P1}
\]
and $\pi_1^2(E_2)$ is maximal among
\[
\{\pi_1^2(F_2)\colon F_2\in\mathcal{D}(f_2),\pi_1^2(F_2)\subset D_1\}
\]
with respect to the inclusion relation.

{\bf Step $2\colon$} Note that $(f_1^{m_2})|_{\pi_1^2(E_2)}$ is mixing, so $(f_1^{m_3})|_{\pi_1^2(E_2)}$ is transitive, and take a transitive point $p_1\in\pi_1^2(E_2)$, i.e., $\pi_1^2(E_2)=\omega(p_1,f_1^{m_3})$, the $\omega$-limit set. Since
\[
p_1\in\pi_1^2(E_2)\subset\pi_1^2(X_2)=\pi_1^3(X_3),
\]
we have $p_1=\pi_1^3(q_1)$ for some $q_1\in X_3$. Take $D_2\in\mathcal{D}(f_2)$ with $\pi_2^3(q_1)\in D_2$. Then, choose $E_3\in\mathcal{D}(f_3)$ such that
\[
\pi_2^3(q_1)\in\pi_2^3(E_3)\subset D_2, \tag{P2}
\]
and $\pi_2^3(E_3)$ is maximal among
\[
\{\pi_2^3(F_3)\colon F_3\in\mathcal{D}(f_3),\pi_2^3(q_1)\in\pi_2^3(F_3)\subset D_2\}
\]
with respect to the inclusion relation. By (P2), we have
\[
p_1\in\pi_1^3(E_3)\subset\pi_1^2(D_2),
\]
implying
\[
\pi_1^2(E_2)\subset\pi_1^3(E_3)\subset\pi_1^2(D_2)
\]
since $\pi_1^2(E_2)=\omega(p_1,f_1^{m_3})$, and $\pi_1^3(E_3)$ is $f_1^{m_3}$-invariant. By (P1) and $p_1\in\pi_1^2(E_2)$, we see that $p_1\in D_1\cap\pi_1^2(D_2)$; therefore,
\[
\pi_1^2(E_2)\subset\pi_1^3(E_3)\subset\pi_1^2(D_2)\subset D_1.
\]
By the maximality of $\pi_1^2(E_2)$ in Step 1, we obtain
\[
\pi_1^2(E_2)=\pi_1^3(E_3)=\pi_1^2(D_2). \tag{Q1}
\]

{\bf Step $3\colon$} Note that $(f_2^{m_3})|_{\pi_2^3(E_3)}$ is mixing, so $(f_2^{m_4})|_{\pi_2^3(E_3)}$ is transitive, and take a transitive point $p_2\in\pi_2^3(E_3)$, i.e., $\pi_2^3(E_3)=\omega(p_2,f_2^{m_4})$. Since
\[
p_2\in\pi_2^3(E_3)\subset\pi_2^3(X_3)=\pi_2^4(X_4),
\]
we have $p_2=\pi_2^4(q_2)$ for some $q_2\in X_4$. Take $D_3\in\mathcal{D}(f_3)$ with $\pi_3^4(q_2)\in D_3$. Then, choose $E_4\in\mathcal{D}(f_4)$ such that
\[
\pi_3^4(q_2)\in\pi_3^4(E_4)\subset D_3, \tag{P3}
\]
and $\pi_3^4(E_4)$ is maximal among
\[
\{\pi_3^4(F_4)\colon F_4\in\mathcal{D}(f_4),\pi_3^4(q_2)\in\pi_3^4(F_4)\subset D_3\}
\]
with respect to the inclusion relation. By (P3), we have
\[
p_2\in\pi_2^4(E_4)\subset\pi_2^3(D_3),
\]
implying
\[
\pi_2^3(E_3)\subset\pi_2^4(E_4)\subset\pi_2^3(D_3)
\]
since $\pi_2^3(E_3)=\omega(p_2,f_2^{m_4})$, and $\pi_2^4(E_4)$ is $f_2^{m_4}$-invariant. By (P2) and $p_2\in\pi_2^3(E_3)$, we see that $\pi_2^3(q_1)\in\pi_2^3(E_3)$ and $p_2\in D_2\cap\pi_2^3(D_3)$; therefore,
\[
\pi_2^3(q_1)\in\pi_2^3(E_3)\subset\pi_2^4(E_4)\subset\pi_2^3(D_3)\subset D_2.
\]
By the maximality of $\pi_2^3(E_3)$ in Step 2, we obtain
\[
\pi_2^3(E_3)=\pi_2^4(E_4)=\pi_2^3(D_3). \tag{Q2}
\]
(Q1) and (Q2) yield $\pi_1^2(D_2)=\pi_1^3(D_3)$.

{\bf Step $4\colon$} Note that $(f_3^{m_4})|_{\pi_3^4(E_4)}$ is mixing, so $(f_3^{m_5})|_{\pi_3^4(E_4)}$ is transitive, and take a transitive point $p_3\in\pi_3^4(E_4)$, i.e., $\pi_3^4(E_4)=\omega(p_3,f_3^{m_5})$. Since
\[
p_3\in\pi_3^4(E_4)\subset\pi_3^4(X_4)=\pi_3^5(X_5),
\]
we have $p_3=\pi_3^5(q_3)$ for some $q_3\in X_5$. Take $D_4\in\mathcal{D}(f_4)$ with $\pi_4^5(q_3)\in D_4$. Then, choose $E_5\in\mathcal{D}(f_5)$ such that
\[
\pi_4^5(q_3)\in\pi_4^5(E_5)\subset D_4, \tag{P4}
\]
and $\pi_4^5(E_5)$ is maximal among
\[
\{\pi_4^5(F_5)\colon F_5\in\mathcal{D}(f_5),\pi_4^5(q_3)\in\pi_4^5(F_5)\subset D_4\}
\]
with respect to the inclusion relation. By (P4), we have
\[
p_3\in\pi_3^5(E_5)\subset\pi_3^4(D_4),
\]
implying
\[
\pi_3^4(E_4)\subset\pi_3^5(E_5)\subset\pi_3^4(D_4)
\]
since $\pi_3^4(E_4)=\omega(p_3,f_3^{m_5})$, and $\pi_3^5(E_5)$ is $f_3^{m_5}$-invariant. By (P3) and $p_3\in\pi_3^4(E^4)$, we see that $\pi_3^4(q_2)\in\pi_3^4(E_4)$ and $p_3\in D_3\cap\pi_3^4(D_4)$; therefore,
\[
\pi_3^4(q_2)\in\pi_3^4(E_4)\subset\pi_3^5(E_5)\subset\pi_3^4(D_4)\subset D_3.
\]
By the maximality of $\pi_3^4(E_4)$ in Step 3, we obtain
\[
\pi_3^4(E_4)=\pi_3^5(E_5)=\pi_3^4(D_4). \tag{Q3}
\]
(Q2) and (Q3) yield $\pi_2^3(D_3)=\pi_2^4(D_4)$.

Continuing inductively, we obtain a sequence $D_\ast=(D_n)_{n\ge1}\in\prod_{n\ge1}\mathcal{D}(f_n)$. For any $n\ge1$, $\pi_n^{n+1}(D_{n+1})\subset D_n$ and $\pi_n^{n+1}(D_{n+1})=\pi_n^{n+2}(D_{n+2})$ are established in Steps $n+1$ and $n+2$, respectively. Thus, $D_\ast\in\mathcal{D}_\pi$, and 
\[
\tilde{\pi}=((\pi_n^{n+1})|_{D_{n+1}}\colon D_{n+1}\to D_n)_{n\ge1}
\]
satisfies MLC(1), completing the proof.
\end{proof}

The next lemma relates the previous lemma to a method developed in \cite{Ka2}. Let $\pi=(\pi_n^{n+1}\colon(X_{n+1},f_{n+1})\to(X_n,f_n))_{n\ge1}$ be an inverse sequence of equivariant maps with MLC(1) and let $(X,f)=\lim_\pi(X_n,f_n)$. Suppose that $(X_n,f_n)$, $n\ge1$, are transitive SFTs, and for $D_\ast=(D_n)_{n\ge1}\in\mathcal{D}_\pi$,
\[
\tilde{\pi}=((\pi_n^{n+1})|_{D_{n+1}}\colon D_{n+1}\to D_n)_{n\ge1}
\]
satisfies MLC(1). By Lemma 3.3, letting $D=[D_\ast]$, we have $D\in\mathcal{D}(f)$. Let $d$, $d_n$, $n\ge1$, be the metrics on $X$, $X_n$.

\begin{lem}
For any $\epsilon>0$, there is $\delta>0$ such that every $\delta$-pseudo orbit $(x^{(i)})_{i\ge0}$ of $f$ with $x^{(0)}\in D$ is $\epsilon$-shadowed by some $x\in D$.
\end{lem}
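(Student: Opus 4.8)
The plan is to adapt the argument that an inverse limit of shadowing systems under MLC(1) again has the shadowing property, while keeping track of the fiber $D$ at every step. First I would reduce to finitely many coordinates. Equip $X$ with the inverse-limit metric $d(x,y)=\sum_{n\ge1}2^{-n}\bar d_n(x_n,y_n)$, where $\bar d_n\le1$, fix $\epsilon>0$, and choose $N$ with $\sum_{n>N}2^{-n}\le\epsilon/2$; it then suffices to produce a point $x\in D$ whose first $N$ coordinates $\epsilon/2$-shadow the projected pseudo orbit, since the tail coordinates contribute at most $\epsilon/2$ no matter what. I record two facts used throughout: each $D_n$ is clopen, being a member of the finite partition $\mathcal{D}(f_n)$ of the SFT $X_n$, so $\mathrm{dist}(D_n,X_n\setminus D_n)>0$; and, by Lemma 2.1 applied to the MLC(1) sequence $\tilde\pi$, one has $\hat D_n=\pi_n^{n+1}(D_{n+1})$, so each $\hat\pi_n^{n+1}\colon\hat D_{n+1}\to\hat D_n$ is surjective and the inclusion carries $\hat D:=\lim_{\hat\pi}\hat D_n$ homeomorphically onto $D$.

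Next I would shadow at a single level. Since the projection $X\to X_{N+1}$ is equivariant and uniformly continuous, a $\delta$-pseudo orbit $(x^{(i)})$ of $f$ projects to a pseudo orbit $(x^{(i)}_{N+1})$ of $f_{N+1}$ whose jumps are as small as desired once $\delta$ is small. As $(X_{N+1},f_{N+1})$ is a transitive SFT it has the shadowing property, so $(x^{(i)}_{N+1})$ is $\eta$-shadowed by some $w\in X_{N+1}$ for a prescribed small $\eta$. Because $x^{(0)}_{N+1}\in D_{N+1}$ and $D_{N+1}$ is clopen, choosing $\eta<\mathrm{dist}(D_{N+1},X_{N+1}\setminus D_{N+1})$ forces $w\in D_{N+1}$. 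I then set $x_n:=\pi_n^{N+1}(w)$ for $1\le n\le N$; by MLC(1) these lie in $\pi_n^{N+1}(D_{N+1})=\hat D_n$, and by equivariance and uniform continuity of the projections they still shadow $(x^{(i)}_n)$, with a modulus that can be made $\le\epsilon/2$ at every level $n\le N$ by shrinking $\eta$, hence $\delta$. This is where MLC(1) is essential: shadowing one level above and descending places the shadower in the \emph{stable} image $\hat D_n$, and so it is liftable, whereas an arbitrary shadower in $D_n$ need not extend to a point of the inverse limit at all.

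Finally I would assemble the shadowing point. The coordinates $x_1,\dots,x_N$ are compatible, since they are the projections $\pi_n^{N+1}(w)$ of a single point, and they simultaneously $\epsilon/2$-shadow at every level $n\le N$. Using surjectivity of $\hat\pi_n^{n+1}\colon\hat D_{n+1}\to\hat D_n$, again from MLC(1), I extend $x_N\in\hat D_N$ to a full compatible sequence $(x_n)_{n\ge1}$ with $x_n\in\hat D_n\subseteq D_n$ for all $n$, imposing no shadowing on the tail coordinates. The resulting point $x=(x_n)_{n\ge1}$ lies in $\hat D\cong D$, and $d(f^i(x),x^{(i)})\le\sum_{n\le N}2^{-n}(\epsilon/2)+\sum_{n>N}2^{-n}\le\epsilon$ for every $i\ge0$, as required.

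I expect the main obstacle to be precisely the tension between \emph{approximate} shadowing and \emph{exact} compatibility: shadowing pins a coordinate down only to within $\epsilon_n$, yet a genuine point of $X$ must satisfy $\pi_n^{n+1}(x_{n+1})=x_n$ exactly and, for membership in $D$, must have every coordinate in $D_n$. The device that resolves this is the identity $\hat D_n=\pi_n^{n+1}(D_{n+1})$ furnished by MLC(1) of $\tilde\pi$, which lets one first shadow one level up inside the clopen piece $D_{N+1}$, descend into the stable image $\hat D_N$, and then lift back up the tower without ever leaving $D$; the clopenness of the pieces $D_n$ is what converts ``close to $x^{(0)}$'' into ``in $D$.''
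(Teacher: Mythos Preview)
Your proposal is correct and follows essentially the same route as the paper's proof: shadow the projected pseudo orbit at level $N+1$ inside the clopen piece $D_{N+1}$, use MLC(1) of $\tilde\pi$ (via Lemma 2.1) to place the projection in $\hat D_N$, and then lift to a point of $D=[D_\ast]$; the paper carries out exactly this argument, only leaving the choice of inverse-limit metric implicit rather than fixing the $\sum 2^{-n}\bar d_n$ form.
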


\begin{proof}
Fix any $N>0$ and $\epsilon'>0$. Note that $f_{N+1}\colon X_{N+1}\to X_{N+1}$ has the shadowing property, and $\mathcal{D}(f_{N+1})$ is a clopen partition of $X_{N+1}$; therefore, there is $\delta'>0$ such that for any $E_{N+1}\in\mathcal{D}_{N+1}$, every $\delta'$-pseudo orbit $(y_{N+1}^{(i)})_{i\ge0}$
of $f_{N+1}$ with $y_{N+1}^{(0)}\in E_{N+1}$ is $\epsilon'$-shadowed by some $y_{N+1}\in E_{N+1}$.

If $\delta>0$ is small enough, then for every $\delta$-pseudo orbit $\xi=(x^{(i)})_{i\ge0}$ of $f$ with $x^{(0)}\in D$, $\xi_{N+1}=(x_{N+1}^{(i)})_{i\ge0}$ is a $\delta'$-pseudo orbit of $f_{N+1}$ with $x_{N+1}^{(0)}\in D_{N+1}$, which is $\epsilon'$-shadowed by some $x_{N+1}\in D_{N+1}$. Since
\[
\pi_N^{N+1}(x_{N+1})\in\pi_N^{N+1}(D_{N+1})=\hat{D}_N
\]
by MLC(1) of $\tilde{\pi}$ (see Lemma 2.1), there is $x=(x_n)_{n\ge1}\in D=[D_\ast]$ such that $x_n=\pi_n^{N+1}(x_{N+1})$ for each $1\le n\le N$. Then, for any $i\ge0$ and $1\le n\le N$, we have
\begin{equation*}
\begin{aligned}
d_n(f^i(x)_n,x_n^{(i)})&=d_n(f_n^i(x_n),x_n^{(i)})\\
&=d_n(f_n^i(\pi_n^{N+1}(x_{N+1})),\pi_n^{N+1}(x_{N+1}^{(i)}))\\
&=d_n(\pi_n^{N+1}(f_{N+1}^i(x_{N+1})),\pi_n^{N+1}(x_{N+1}^{(i)}))
\end{aligned}
\end{equation*}
with $d_{N+1}(f_{N+1}^i(x_{N+1}),x_{N+1}^{(i)})\le\epsilon'$. Therefore, for every $\epsilon>0$, if $N$ is large enough, and then $\epsilon'$ is sufficiently small, we have $d(f^i(x),x^{(i)})\le\epsilon$ for all $i\ge0$, i.e., $\xi$ is $\epsilon$-shadowed by $x\in D$. Since $\xi$ is arbitrary, the lemma has been proved. 
\end{proof}

To prove Lemma 1.3, we use the method in \cite{Ka2}. The next lemma is a modification of \cite[Lemma 2.6]{Ka2}.

\begin{lem}
Let $f\colon X\to X$ be a chain transitive continuous map and let $D\in\mathcal{D}(f)$. Suppose that for any $\epsilon>0$, there is $\delta>0$ such that every $\delta$-pseudo orbit $(x_i)_{i\ge0}$ of $f$ with $x_0\in D$ is $\epsilon$-shadowed by some $x\in D$. Then, for any $y,z\in D$ and $\epsilon>0$, there is $w\in D$ such that $d(z,w)\le\epsilon$ and $\limsup_{k\to\infty}d(f^k(y),f^k(w))\le\epsilon$. 
\end{lem}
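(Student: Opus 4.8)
The plan is to realize $w$ as a point that shadows a carefully designed $\delta$-pseudo orbit. Given $y,z\in D$ and $\epsilon>0$, I would first invoke the standing hypothesis to fix $\delta>0$ so that every $\delta$-pseudo orbit of $f$ starting in $D$ is $\epsilon$-shadowed by a point of $D$. The idea is then to build a $\delta$-pseudo orbit $\xi=(x_i)_{i\ge0}$ with $x_0=z$ whose tail coincides with the \emph{genuine} orbit of $y$, \emph{with the indices aligned}, i.e.\ $x_i=f^i(y)$ for all large $i$. Shadowing $\xi$ by some $w\in D$ then yields $d(z,w)\le\epsilon$ from the $0$-th coordinate and $d(f^i(w),f^i(y))\le\epsilon$ for all large $i$ from the tail, which is exactly the assertion.

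The decisive point is to connect $z$ to the orbit of $y$ by a $\delta$-chain whose length is \emph{exactly} the exponent at which that orbit is entered; this is where the relation $\sim_{f,\delta}$ of Section 2.2 enters. Since $y,z\in D$ and $D$ is a class of $\sim_f$, we have $z\sim_f y$, hence $z\sim_{f,\delta}y$. Writing $m=m(f,\delta)$, property (2) gives $y\sim_{f,\delta}f^{mk_0}(y)$ for every $k_0\ge0$, so by transitivity of $\sim_{f,\delta}$ we obtain $z\sim_{f,\delta}f^{mk_0}(y)$. Let $N>0$ be as in property (3). Choosing any $k_0\ge N$ and setting $k=mk_0$, property (3) furnishes a $\delta$-chain $(c_i)_{i=0}^{k}$ of $f$ with $c_0=z$, $c_k=f^{mk_0}(y)=f^k(y)$, and length exactly $k$. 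The equality of the chain length with the exponent of $f^k(y)$ is the crux: it is precisely what lets the subsequent orbit be appended without an index shift.

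It then remains to concatenate and shadow. I would define
\[
x_i=
\begin{cases}
c_i & 0\le i\le k,\\
f^i(y) & i\ge k.
\end{cases}
\]
This is well defined since $c_k=f^k(y)$, and it is a $\delta$-pseudo orbit: for $0\le i\le k-1$ the chain condition $d(f(c_i),c_{i+1})\le\delta$ holds, the junction at $i=k-1$ reads $d(f(c_{k-1}),f^k(y))\le\delta$, and for $i\ge k$ one has $d(f(f^i(y)),f^{i+1}(y))=0$. As $x_0=z\in D$, the hypothesis yields $w\in D$ with $d(f^i(w),x_i)\le\epsilon$ for all $i\ge0$. Taking $i=0$ gives $d(z,w)=d(w,x_0)\le\epsilon$, while for $i\ge k$ we get $d(f^i(w),f^i(y))\le\epsilon$, so $\limsup_{k\to\infty}d(f^k(y),f^k(w))\le\epsilon$, as required.

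I expect the main obstacle to be exactly the index alignment isolated in the second paragraph: a naive $\delta$-chain from $z$ to $y$ of some length $\ell$ would only produce $d(f^{\ell+j}(w),f^{j}(y))\le\epsilon$, a tracking shifted by $\ell$, which does not compare $f^n(w)$ with $f^n(y)$ at equal times. Properties (2) and (3) of $\sim_{f,\delta}$—shifting the target to $f^{mk_0}(y)$ and then realizing the connection by a chain of length exactly $mk_0$—are what remove this shift and deliver the aligned tracking.
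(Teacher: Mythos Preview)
Your proposal is correct and follows essentially the same route as the paper's proof: both build a $\delta$-pseudo orbit starting at $z$ whose tail is the genuine orbit of $y$ with aligned indices, using properties (2) and (3) of $\sim_{f,\delta}$ to obtain a $\delta$-chain from $z$ to $f^{mk_0}(y)$ of length exactly $mk_0$, and then invoke the shadowing hypothesis. The only cosmetic difference is that the paper simply takes $k_0=N$ rather than an arbitrary $k_0\ge N$.
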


\begin{proof}
Given any $\epsilon>0$, take $\delta>0$ as in the assumption. For this $\delta$, choose $N>0$ as in the property (3) of $\sim_{f,\delta}$ (see Section 2.2.3). Note that $y,z\in D$ implies $y\sim_f z$ and so $y\sim_{f,\delta}z$. Since $y\sim_{f,\delta}f^{mN}(y)$, we have $z\sim_{f,\delta}f^{mN}(y)$. Then, the choice of $N$ gives a $\delta$-chain $\alpha=(y_i)_{i=0}^{mN}$ of $f$ with $y_0=z$ and $y_{mN}=f^{mN}(y)$. Let
\[
\beta=(f^{mN}(y),f^{mN+1}(y),\dots)
\]
and $\xi=\alpha\beta=(x_i)_{i\ge0}$. Then, $\xi$ is a $\delta$-pseudo orbit of $f$ with $x_0=z\in D$, so is $\epsilon$-shadowed by some $w\in D$. Note that $d(z,w)=d(x_0,w)\le\epsilon$. Also, we have
\[
d(f^i(y),f^i(w))=d(x_i,f^i(w))\le\epsilon
\]
for every $i\ge mN$, so $\limsup_{k\to\infty}d(f^k(y),f^k(w))\le\epsilon$. This completes the proof.
\end{proof}

Let $f\colon X\to X$ be a continuous map. For $n\ge 2$ and $r>0$, we say that an $n$-tuple $(x_1,x_2,\dots,x_n)\in X^n$ is {\em $r$-distal} if
\[
\inf_{k\ge0}\min_{1\le i<j\le n}d(f^k(x_i),f^k(x_j))\ge r.
\]
Then, the following lemma is a consequence of Lemma 2.4 and Lemma 2.5 in \cite{Ka2}.

\begin{lem}
Suppose that a continuous map $f\colon X\to X$ is chain transitive and has the shadowing property. If $h_{top}(f)>0$, then for any $n\ge2$, there is $r_n>0$ such that for every $D\in\mathcal{D}(f)$, there is an $r_n$-distal $n$-tuple $(x_1,x_2,\dots,x_n)\in X^n$ with $\{x_1,x_2,\dots,x_n\}\subset D$.
\end{lem}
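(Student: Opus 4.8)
The plan is to deduce the statement from the two distality results of \cite{Ka2} together with the cyclic clopen structure of the relations $\sim_{f,\delta}$ recalled in Section 2.2.3. First I would record that, since $f$ is chain transitive and has the shadowing property, it is in fact transitive (Section 2.2.1), so that the positive entropy hypothesis can be fed into the entropy-to-distality machinery. The argument then splits into an \emph{existence} step (producing one distal tuple for each $n$) and a \emph{uniformity} step (placing such a tuple in every $D\in\mathcal{D}(f)$ with one common gap $r_n$).

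For existence, I would fix $n\ge2$ and use $h_{top}(f)>0$ together with the shadowing property to produce a single $r_n$-distal $n$-tuple, where $r_n>0$ depends only on $n$ and $f$. This is the role of Lemma 2.4 of \cite{Ka2}: positive entropy yields, at some fixed scale $\epsilon_0>0$, arbitrarily many finite orbit segments that are pairwise $\epsilon_0$-separated at prescribed times, and the shadowing property closes prescribed itineraries into genuine orbits. Choosing a design in which the $n$ selected itineraries occupy pairwise different $\epsilon_0$-regions at every time produces an $n$-tuple whose orbits remain at distance $\ge r_n$ for all $k\ge0$; the gap $r_n$ is allowed to shrink as $n$ grows, which is harmless. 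Since the quotient $X/{\sim_f}$ is equicontinuous (and so carries no entropy), this construction can be localized so that the resulting tuple lies inside a single class $D_0\in\mathcal{D}(f)$.

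The second, and main, step is uniformity over $\mathcal{D}(f)$: the same $r_n$ must work for \emph{every} class $D$. Here I would exploit that each $\sim_{f,\delta}$ is an open and closed $(f\times f)$-invariant equivalence relation with finitely many classes that are cyclically permuted by $f$ (via the relation $x\sim_{f,\delta}f^{m(f,\delta)k}(x)$), and that $\sim_f\,=\,\bigcap_{\delta>0}\sim_{f,\delta}$, so the classes $\mathcal{D}(f)$ are exactly the intersections of the $\sim_{f,\delta}$-classes. Applying a forward iterate $f^t$ to the tuple of the first step preserves $r_n$-distality, since the infimum $\inf_{k\ge0}$ is then taken only over a shifted range, and such iterates slide the tuple through the cyclic classes of $\sim_{f,\delta}$. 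Combining this with the transfer lemma (Lemma 2.5 of \cite{Ka2}) and passing to finer $\delta$, one places an $r_n$-distal $n$-tuple inside any prescribed $D\in\mathcal{D}(f)$ while retaining the common constant $r_n$.

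I expect the genuine difficulty to lie in this second step. The subtlety is twofold: keeping the gap $r_n$ fixed (not merely positive) as the tuple is transported into an arbitrary class, and handling the case in which $\mathcal{D}(f)$ is infinite, where no single power of $f$ reaches every class and one must instead combine the density of the $f$-orbit of $D_0$ in the minimal quotient $X/{\sim_f}$ with a compactness argument. Note that the required transfer preserves distality, which is the opposite phenomenon to the proximal transfer furnished by Lemma 5.3; guaranteeing this preservation is precisely what Lemma 2.5 of \cite{Ka2} is designed to provide, and it is the point on which the uniform choice of $r_n$ ultimately rests.
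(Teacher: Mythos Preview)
Your proposal is correct and matches the paper's approach exactly: the paper does not give an independent proof of this lemma but simply states that it is a consequence of Lemma~2.4 and Lemma~2.5 of \cite{Ka2}, which are precisely the existence and transfer ingredients you identify. Your elaboration of how these two lemmas combine via the cyclic clopen structure of the $\sim_{f,\delta}$-classes is a faithful unpacking of that citation.
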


We recall a simplified version of Mycielski's theorem \cite[Theorem 1]{My}.  A topological space is said to be {\em perfect} if it has no isolated point.

\begin{lem}
Let $X$ be a perfect complete metric space. If $R_n$ is a residual subset of $X^n$ for each $n\ge2$, then there is a Mycielski set $S$ which is dense in $X$ and satisfies
$(x_1,x_2,\dots,x_n)\in R_n$ for any $n\ge2$ and distinct $x_1,x_2,\dots,x_n\in S$.
\end{lem}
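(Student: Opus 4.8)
The plan is to build $S$ as a countable union $S=\bigcup_{j\ge1}C_j$ of Cantor sets by a single Cantor-scheme construction, planting $C_j$ inside the $j$-th member of a fixed countable base of $X$ so that the union is dense, while simultaneously forcing every tuple of distinct points to meet each $R_n$. First I would convert the residual hypotheses into open-dense data: for each $n\ge2$ write $R_n\supseteq\bigcap_{k\ge1}W_{n,k}$ with $W_{n,k}$ open and dense in $X^n$. It then suffices to produce $S$ so that any distinct $x_1,\dots,x_n\in S$ satisfy $(x_1,\dots,x_n)\in W_{n,k}$ for every $k\ge1$.

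The analytic core is a single refinement step. Suppose at a finite stage I have finitely many nonempty open ``boxes'' with pairwise disjoint closures and small diameter. Given a target $W_{n,k}$, I process the finitely many $n$-tuples of distinct boxes one at a time: for such a tuple $(V_1,\dots,V_n)$, openness and density of $W_{n,k}$ let me choose nonempty open $V_i'\subseteq V_i$ with $V_1'\times\cdots\times V_n'\subseteq W_{n,k}$, and I replace each $V_i$ by $V_i'$. Since membership in $W_{n,k}$ is preserved under shrinking factors (if a product of sets lies in $W_{n,k}$, so does any product of subsets), shrinking for later tuples never destroys the inclusions already obtained; after finitely many shrinkings every $n$-tuple of distinct current boxes has its closed product inside $W_{n,k}$. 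Perfectness of $X$ lets me split each box into two boxes with disjoint closures, and completeness lets me force diameters to tend to $0$ so that each branch of the scheme converges to a single point.

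With this step in hand I would run one recursive bookkeeping over a growing finite collection of leaf boxes. I activate the Cantor sets one at a time, taking the root box of $C_j$ inside the $j$-th basic open set (shrinking it to have small diameter), which makes $\bigcup_j C_j$ dense. I schedule the splitting and refinement steps so that (a) each activated scheme is split infinitely often, so that each $C_j$ is a genuine Cantor set, and (b) every pair $(n,k)$ is used as the target $W_{n,k}$ at infinitely many stages relative to the current leaf set. Taking each $C_j$ to be the set of branch-limits of its scheme and $S=\bigcup_{j}C_j$, I claim $S$ works. Indeed, for distinct $x_1,\dots,x_n\in S$, pick a branch realizing each $x_i$; since box diameters tend to $0$ and the $x_i$ are distinct, for all large levels the chosen leaf boxes are pairwise distinct, and some later stage targets $(n,k)$. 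At that stage the closed product of the leaf boxes containing the $x_i$ lies in $W_{n,k}$, so $(x_1,\dots,x_n)\in W_{n,k}$; letting $k\to\infty$ gives $(x_1,\dots,x_n)\in\bigcap_k W_{n,k}\subseteq R_n$.

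The main obstacle is precisely this bookkeeping: in a single recursion I must interleave the infinitely many splitting demands (one per Cantor set), the infinitely many independence demands (one per pair $(n,k)$, each needed \emph{cofinally} so that branches which separate arbitrarily late are still caught), and the density demand for the planted roots, all over a finite leaf set at each stage. The one genuinely substantive point is the refinement step above, and it succeeds exactly because each $W_{n,k}$ is open and dense and because $W_{n,k}$-membership survives the shrinking of factors; the remaining work is to arrange a diagonal enumeration in which no requirement is starved.
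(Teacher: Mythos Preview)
The paper does not prove this lemma at all: it is stated as a simplified version of Mycielski's theorem and simply cited to \cite{My}. So there is no ``paper's own proof'' to compare against; you are supplying what the paper deliberately omits.

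Your sketch is the standard Cantor-scheme proof of Mycielski's theorem and is essentially correct. A few small points worth tightening: (i) you tacitly use a countable base of $X$, i.e., separability, which is not literally in the hypothesis ``perfect complete metric space'' but is available here since in this paper $X$ is always compact metric (and the $D$ to which the lemma is applied is closed in $X$); (ii) to conclude $(x_1,\dots,x_n)\in W_{n,k}$ for branch-limit points you need the \emph{closures} of the leaf boxes to have their product in $W_{n,k}$, so at each refinement you should shrink once more (regularity of metric spaces) to get $\overline{V_i'}\subset V_i$ before moving on; (iii) the assertion that each $C_j$ is a Cantor set follows because the branch map $\{0,1\}^{\mathbb{N}}\to X$ is a continuous injection from a compact space. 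None of these is a genuine gap; the scheduling/bookkeeping you describe is exactly the substance of Mycielski's argument.
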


Finally, we complete the proof of Lemma 1.3. 

\begin{proof}[Proof of Lemma 1.3]
By Lemma 1.2 and Lemma 2.4, we may assume $(X,f)=\lim_{\pi}(X_j,f_j)$, where $(X_j,f_j)$, $j\ge1$, are SFTs, and
\[
\pi=(\pi_j^{j+1}\colon(X_{j+1},f_{j+1})\to(X_j,f_j))_{j\ge1}
\]
is an inverse sequence of equivariant maps with MLC(1). Since $f$ is transitive, we have $X\in\mathcal{C}(f)$, so by Lemma 3.2, $X=[C_\ast]$ for some $C_\ast=(C_j)_{j\ge1}\in\mathcal{C}_\pi$. Then, any $x=(x_j)_{j\ge1}\in X$ satisfies $x_j\in C_j$ for all $j\ge1$, and $(X,f)$ is topologically conjugate to $\lim_{\pi'}(C_j,(f_j)|_{C_j})$, where
\[
\pi'=((\pi_j^{j+1})|_{C_{j+1}}\colon(C_{j+1},(f_{j+1})|_{C_{j+1}})\to(C_j,(f_j)|_{C_j}))_{j\ge1}.
\]
Note that $(C_j,(f_j)|_{C_j})$, $j\ge1$, are transitive SFTs, and by Lemma 3.4, $\pi'$ satisfies MLC(1); therefore, again without loss of generality, we may assume that $f_j$ is transitive for every $j\ge1$. Then, by Lemma 5.1, there is $D_\ast=(D_n)_{n\ge1}\in\mathcal{D}_\pi$ such that
\[
\tilde{\pi}=((\pi_j^{j+1})|_{D_{j+1}}\colon D_{j+1}\to D_j)_{j\ge1}
\]
satisfies MLC(1). Let $D=[D_\ast]\in\mathcal{D}(f)$. By Lemma 5.2, $D$ satisfies the conclusion of Lemma 5.3. Also, the conclusion of Lemma 5.4 is satisfied with $D$. Similarly as in the proof of Theorem 1.1 in \cite{Ka2}, we can show that
\[
D^n\cap{\rm DC1}_n^{\delta_n}(X,f)
\]
is a residual subset of $D^n$ for all $n\ge2$ for some $\delta_n>0$. Thus, by Lemma 5.5, we conclude that $D$ contains a dense Mycielski subset $S$, which is distributionally $n$-$\delta_n$-scrambled for all $n\ge2$, completing the proof. 
\end{proof}

\section{A remark on the chain components under shadowing}

Given any continuous map $f\colon X\to X$, $\mathcal{C}(f)$ can be seen as a quotient space of $CR(f)$ with respect to the closed $(f\times f)$-invariant equivalence relation $\leftrightarrow_f$ in $CR(f)^2$. Then, $\mathcal{C}(f)=CR(f)\slash\leftrightarrow_f$  is a compact metric space.

In the case of $\dim X=0$, if $f$ has the shadowing property, then by Lemma 1.2 and Lemma 2.4, $(X,f)$ is topologically conjugate to $\lim_\pi(X_n,f_n)$, where $(X_n,f_n)$, $n\ge1$, are SFTs, and
\[
\pi=(\pi_n^{n+1}\colon(X_{n+1},f_{n+1})\to(X_n,f_n))_{n\ge1}
\]
is an inverse sequence of equivariant maps with MLC(1). Without loss of generality, we consider
the case where $(X,f)=\lim_\pi(X_n,f_n)$. For any $C\in\mathcal{C}(f)$, by Lemma 3.2, we have $C=[C_\ast]$ for some $C_\ast=(C_n)_{n\ge1}\in\mathcal{C}_\pi$. As in the proof of Lemma 4.1, it can be shown that for each $N>0$, there is $C'_\ast=(C'_m)_{m\ge N}\in\prod_{m\ge N}\mathcal{C}(f_m)$ with the following properties:
\begin{itemize}
\item[(1)] $C'_N=C_N$,
\item[(2)] $\pi_m^{m+1}(C'_{m+1})\subset C'_m$ for every $m\ge N$,
\item[(3)] $\pi_m^{m+1}(C'_{m+1})=\pi_m^{m+2}(C'_{m+2})$ for all $m\ge N$.
\end{itemize}
Define $C''_\ast=(C''_n)_{n\ge1}\in\prod_{n\ge1}\mathcal{C}(f_n)$ by
\begin{equation*}
C''_n=
\begin{cases}
C_n&\text{if $1\le n<N$}\\
C'_n&\text{if $N\le n$}
\end{cases}
.
\end{equation*}
The properties (1) and (2) ensure $C''_\ast\in\mathcal{C}_\pi$. By Lemma 3.2, letting $C''=[C''_\ast]$, we obtain $C''\in\mathcal{C}(f)$, and by the property (3), similarly as in the proof of Lemma 4.1, it can be seen that $f|_{C''}\colon C''\to C''$ has the shadowing property. Note that for any neighborhood $U$ of $C$ in $CR(f)$, by the property (1) above, if $N$ is sufficiently large, then $C''\subset U$. Thus, letting
\[
\mathcal{C}_{sh}(f)=\{C\in\mathcal{C}(f)\colon\text{$f|_C$ has the shadowing property}\},
\]
we conclude that $\mathcal{C}_{sh}(f)$ is dense in $\mathcal{C}(f)$. In other words, we obtain

\begin{thm}
Let $f\colon X\to X$ be a continuous map with the shadowing property. If $\dim X=0$, then $\mathcal{C}(f)=\overline{\mathcal{C}_{sh}(f)}$.
\end{thm}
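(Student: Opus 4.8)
The plan is to realize $(X,f)$ as an inverse limit of SFTs and then, for a fixed chain component, to manufacture nearby chain components carrying the shadowing property by modifying only the tail coordinates. Since $\dim X=0$ and $f$ has the shadowing property, Lemma 1.2 together with Lemma 2.4 lets me assume, up to topological conjugacy, that $(X,f)=\lim_\pi(X_n,f_n)$ where each $(X_n,f_n)$ is an SFT and $\pi=(\pi_n^{n+1})_{n\ge1}$ satisfies MLC(1). Because $\mathcal{C}(f)=CR(f)\slash\leftrightarrow_f$ is a compact metric space, proving $\mathcal{C}(f)=\overline{\mathcal{C}_{sh}(f)}$ amounts to showing that every $C\in\mathcal{C}(f)$ lies in the closure of $\mathcal{C}_{sh}(f)$. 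By Lemma 3.2 I write $C=[C_\ast]$ for some $C_\ast=(C_n)_{n\ge1}\in\mathcal{C}_\pi$ and then approximate $[C]$ in the quotient.

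The heart of the argument is a tail construction. Fixing $C$ and an arbitrary $N>0$, I would produce a sequence $C'_\ast=(C'_m)_{m\ge N}$ with $C'_m\in\mathcal{C}(f_m)$ satisfying $C'_N=C_N$, $\pi_m^{m+1}(C'_{m+1})\subset C'_m$, and the key equalities $\pi_m^{m+1}(C'_{m+1})=\pi_m^{m+2}(C'_{m+2})$ for all $m\ge N$. This is exactly the construction performed in the Claim inside the proof of Lemma 4.1, stripped of the bookkeeping about positive entropy: starting from $C'_N=C_N$, one selects at each stage a chain component whose image under $\pi$ is maximal (with respect to inclusion) among the admissible images, lifts a transitive point of that image through $\pi$ using MLC(1) of $\pi$, and reads off the matching equalities from the maximality, just as in Steps $1$--$4$ there. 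These equalities say precisely that the restricted sequence $\pi'=((\pi_m^{m+1})|_{C'_{m+1}})_{m\ge N}$ satisfies MLC(1), and hence MLC.

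With $C'_\ast$ in hand I glue: define $C''_\ast=(C''_n)_{n\ge1}$ by $C''_n=C_n$ for $1\le n<N$ and $C''_n=C'_n$ for $n\ge N$. The relations $C'_N=C_N$ and $\pi_m^{m+1}(C'_{m+1})\subset C'_m$ give $C''_\ast\in\mathcal{C}_\pi$, so $C''=[C''_\ast]\in\mathcal{C}(f)$ by Lemma 3.2, and $(C'',f|_{C''})$ is topologically conjugate to $\lim_{\pi'}(C'_m,(f_m)|_{C'_m})$ (the initial coordinates $n<N$ may be discarded as in Section 2.3.4). Since each $(C'_m,(f_m)|_{C'_m})$ is a transitive SFT and therefore has the shadowing property, and $\pi'$ satisfies MLC, Lemma 1.1 shows that $f|_{C''}$ has the shadowing property; that is, $C''\in\mathcal{C}_{sh}(f)$.

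It remains to check that $C''$ approximates $C$ as $N$ grows. The sets $U_N=\{x=(x_n)_{n\ge1}\in X\colon x_n\in C_n,\ 1\le n<N\}$ are clopen (chain components of SFTs are clopen), saturated for $\leftrightarrow_f$ (each is a union of classes $[E_\ast]$ with prescribed first coordinates), and decrease to $\bigcap_N U_N=[C_\ast]=C$. Given any open neighborhood $O$ of $[C]$ in $\mathcal{C}(f)$, its preimage $q^{-1}(O)$ under the quotient map $q\colon CR(f)\to\mathcal{C}(f)$ is open, saturated, and contains $C$; since $CR(f)\setminus q^{-1}(O)$ is compact and disjoint from $\bigcap_N U_N$, it is disjoint from some $U_N$, so $U_N\subset q^{-1}(O)$ for all large $N$. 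As $C''_n=C_n$ for $n<N$ forces $C''\subset U_N$, I conclude $[C'']\in O$ with $C''\in\mathcal{C}_{sh}(f)$, giving density. The hard part is the tail construction of $C'_\ast$: pinning $C'_N=C_N$ so that $C''$ stays close to $C$, while simultaneously forcing MLC(1) along the tail, is the delicate point, and it is precisely the diagonal maximality-and-lifting scheme of Lemma 4.1 that makes both demands compatible.
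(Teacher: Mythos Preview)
Your proposal is correct and follows essentially the same route as the paper. The paper's argument (which appears as the discussion preceding Theorem 6.1) likewise passes to an inverse limit of SFTs with MLC(1) via Lemma 1.2 and Lemma 2.4, invokes the Claim from the proof of Lemma 4.1 (with the positive-entropy clause (2) dropped) to build $C'_\ast=(C'_m)_{m\ge N}$ with $C'_N=C_N$ and the MLC(1) equalities, glues to obtain $C''\in\mathcal{C}_{sh}(f)$ exactly as you do, and then observes that $C''$ falls inside any prescribed neighborhood of $C$ once $N$ is large; your neighborhood argument via the decreasing saturated sets $U_N$ is simply a more explicit rendering of that last step.
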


This theorem gives a positive answer to a question by Moothathu \cite{Moo} in the zero-dimensional case. Note that for any $C\in\mathcal{C}_{sh}(f)$, by the shadowing property of $f|_C$, we have $C=\overline{M(f|_C)}$, where $M(f|_C)$ denotes the set of minimal points for $f|_C$ (see \cite{Moo} for details).

As a complement to Theorem 6.1, we give an example of a continuous map $f\colon X\to X$ with the following properties:
\begin{itemize}
\item[(1)] $X$ is a Cantor space,
\item[(2)] $f$ has the shadowing property,
\item[(3)] $\mathcal{C}(f)$ is a Cantor space,
\item[(4)] $\mathcal{C}_{sh}(f)$ is a countable set and so is a meager subset of $\mathcal{C}(f)$.
\end{itemize}
 
\begin{ex}
\normalfont
For any closed interval $I=[a,b]$ and $c\in(0,1/2)$, let $\hat{I}=\{a,b\}$, $I_c^{(0)}=[a,a+c(b-a)]$, and $I_c^{(1)}=[b-c(b-a),b]$. Let $(c_j)_{j\ge1}$ be a sequence of positive numbers with $1/2>c_1>c_2>\cdots$. For any $s=(s_j)_{j\ge1}\in\{0,1\}^\mathbb{N}$, let
\[
i(s)=\bigcap_{j\ge0}I(s,j),
\]
where $I(s,j)$ is defined by $I(s,0)=[0,1]$, and $I(s,j+1)=I(s,j)_{c_{j+1}}^{(s_{j+1})}$ for every $j\ge0$. Let
\[
C=\{i(s)\colon s\in\{0,1\}^\mathbb{N}\}\subset[0,1]
\]
and note that $i\colon\{0,1\}^\mathbb{N}\to C$ is a homeomorphism, so $C$ is a Cantor space. For any $j\ge1$, let
\[
\hat{I}_j=\bigcup_{s\in\{0,1\}^\mathbb{N}}[I(s,j)]^{\hat{}}
\]
and note that $\hat{I}_1\subset\hat{I}_2\subset\cdots$. Also, let $A_1=\hat{I}_1$, $A_{j+1}=\hat{I}_{j+1}\setminus\hat{I}_j$, $j\ge1$, and
\[
A=\bigsqcup_{j\ge1}A_j\subset C.
\]

Let $\sigma\colon\{0,1\}^\mathbb{N}\to\{0,1\}^\mathbb{N}$ be the shift map. For each $k\ge1$, define
\[
\Sigma_k=\{x=(x_i)_{i\ge1}\in\{0,1\}^\mathbb{N}\colon\forall i\ge1,i+1\le\forall j\le i+k,x_i=1\Rightarrow x_j=0\},
\]
which is a mixing SFT, so $\sigma|_{\Sigma_k}\colon\Sigma_k\to\Sigma_k$ has the shadowing property. Note that $\Sigma_1\supset\Sigma_2\supset\cdots$ and consider
\[
\Sigma_\infty=\bigcap_{k\ge1}\Sigma_k=\{0^\infty,10^\infty\}\cup\{0^m10^\infty\colon m\ge1\}.
\]
Then, it is easily seen that $\sigma|_{\Sigma_\infty}\colon\Sigma_\infty\to\Sigma_\infty$ does not have the shadowing property. Let $\hat{\Sigma}_k=i(\Sigma_k)$, $k\ge1$, and $\hat{\Sigma}_\infty=i(\Sigma_\infty)$; here, $i\colon\{0,1\}^\mathbb{N}\to C$ is the homeomorphism defined above. Let
\[
X=[(C\setminus A)\times\hat{\Sigma}_\infty]\sqcup\bigsqcup_{k\ge1}[A_k\times\hat{\Sigma}_k],
\]
which is a perfect compact subset of $C\times C$ and so is a Cantor space.

Let $\hat{\sigma}=i\circ\sigma\circ i^{-1}\colon C\to C$ and
\[
f=(id_{C}\times\hat{\sigma})|_X\colon X\to X.
\]
To ensure the shadowing property of $f$, we define a sequence of positive numbers $(c_j)_{j\ge1}$ with $1/2>c_1>c_2>\cdots$ as follows. Fix a sequence of positive numbers $(\epsilon_k)_{k\ge1}$ with $\lim_{k\to\infty}\epsilon_k=0$. Denote by $\pi\colon X\to C$ the projection onto the first coordinate. Let
\[
B_k=\bigsqcup_{j=1}^k A_j,
\]
$k\ge1$. For each $k\ge1$, since $\pi^{-1}(B_k)$ is a finite disjoint union of SFTs,
\[
f|_{\pi^{-1}(B_k)}\colon\pi^{-1}(B_k)\to\pi^{-1}(B_k)
\]
has the shadowing property, implying the existence of $\delta'_k>0$ such that every $\delta'_k$-pseudo orbit $(x_i)_{i\ge0}$ of $f|_{\pi^{-1}(B_k)}$ is $\epsilon_k/2$-shadowed by some $x\in\pi^{-1}(B_k)$. Fix any $c_1\in(0,1/2)$ and assume that $c_k$, $k\ge1$, is given. For any $\delta_k\in(0,\epsilon_k/2)$, if $c_{k+1}\in(0,c_k)$ is small enough, then $X$ is contained in the $\delta_k$-neighborhood of $\pi^{-1}(B_k)$. Then, for every $\delta_k$-pseudo orbit $(y_i)_{i\ge0}$ of $f$, we have
\[
d(x_i,y_i)=\inf\{d(y_i,z_i)\colon z_i\in\pi^{-1}(B_k)\}\le\delta_k
\]
for all $i\ge0$ for some $x_i\in\pi^{-1}(B_k)$. Since
\[
d(f(x_i),x_{i+1})\le d(f(x_i),f(y_i))+d(f(y_i),y_{i+1})+d(y_{i+1},x_{i+1})
\]
for every $i\ge0$, if $\delta_k$ is small enough, then $(x_i)_{i\ge0}$ is a $\delta'_k$-pseudo orbit of $f|_{\pi^{-1}(B_k)}$, $\epsilon_k/2$-shadowed by some $x\in\pi^{-1}(B_k)$. This implies
\[
d(f^i(x),y_i)\le d(f^i(x),x_i)+d(x_i,y_i)\le\epsilon_k/2+\epsilon_k/2=\epsilon_k
\]
for all $i\ge0$, i.e., $(y_i)_{i\ge0}$ is $\epsilon_k$-shadowed by $x$. By defining $(c_j)_{j\ge1}$ in this way, we conclude that $f$ has the shadowing property.

Note that $X=CR(f)$ and
\[
\mathcal{C}_{sh}(f)=\{\pi^{-1}(u)\colon u\in A\},
\] 
which is a countable set. What is left is to show that $\mathcal{C}(f)$ is a Cantor space. Let $\pi_{\leftrightarrow_f}\colon X\to\mathcal{C}(f)$ be the quotient map. For any $x,y\in X$, we easily see that $x\leftrightarrow_f y$ iff $\pi(x)=\pi(y)$. This implies that there is a continuous map $h\colon\mathcal{C}(f)\to C$ with $\pi=h\circ\pi_{\leftrightarrow_f}$, which is bijective and so is a homeomorphism. Thus, $\mathcal{C}(f)$ is a Cantor space.
\end{ex}

\section*{Acknowledgements}

This work was supported by JSPS KAKENHI Grant Number JP20J01143.

\end{document}